\NeedsTeXFormat{LaTeX2e}
 
\documentclass[a4paper,12pt]{amsart}

\usepackage{amsmath, amssymb, amsthm}
\usepackage{url}

\newcommand{\sP}{\mathsf{P}}
\newcommand{\bP}{\mathbb{P}}

\newcommand{\cS}{\mathcal{S}}

\newcommand{\sH}{\mathsf{H}}

\numberwithin{equation}{section}
\theoremstyle{plain}
\newtheorem{theorem}{Theorem}[section]
\newtheorem{lemma}[theorem]{Lemma}
\newtheorem{proposition}[theorem]{Proposition}

\newtheorem{coro}{Corollary}

\theoremstyle{definition}
\newtheorem{definition}[theorem]{Definition}

\theoremstyle{remark}
\newtheorem{remark}[theorem]{Remark}

\begin{document}
\title[]{An extremal subharmonic function  
in non-archimedean potential theory}
\author[M. Stawiska]{Ma{\l}gorzata Stawiska}
\address{Mathematical Reviews, 416 Fourth St., Ann Arbor, MI 48103, USA}
\email{stawiska@umich.edu}

\date{\today}

\subjclass[2020]{Primary 32P05; Secondary 12J25, 31A15, 31C15, 31D05, 32U35}
\keywords{non-archimedean potential theory, Berkovich space, Green function, Leja-Siciak-Zaharjuta extremal function, Brelot-Cartan principle}

\begin{abstract}
 We define an  analog of the Leja-Siciak-Zaharjuta subharmonic extremal function for a proper   subset $E$ of the Berkovich projective line $\sP^1$ over a field with a non-archimedean absolute value, relative to a point $\zeta \not \in E$. When $E$ is a compact set with positive capacity we  prove that the upper semicontinuous regularization of this extremal function equals  the Green function of $E$ relative to $\zeta$. As a separate result, we prove the Brelot-Cartan principle, under the additional assumption  that the Berkovich topology is second countable.
\end{abstract}

\maketitle

\section{Introduction}\label{sec:intro}

Potential theory on curves over a field $K$
 complete
with respect to a  non-archimedean absolute value $|\cdot|$ has made substantial progress  in recent years. Fundamental developments can be found in \cite{Rumely89}, \cite{FJbook}, \cite{BR10} and \cite{ThuillierThesis}. Nevertheless,  some topics have been left unexplored, for instance extremal subharmonic functions associated with compact subsets. Such functions  are well known in classical potential and pluripotential theory in  $\mathbb{C}^N, \ N \geq 1$.  Recall that in $\mathbb{C}$ (with the standard, archimedean absolute value) the Green function with pole at infinity of a compact subset $E$ can be proved to be equal to the so-called Leja extremal function (see \cite{Leja34}, \cite{Leja45}, \cite{Gorski48}). In $\mathbb{C}^N, \ N>1$, an analogous extremal function (Siciak-Zaharjuta extremal function, \cite{Siciak62}, \cite{Za77}, \cite{Siciak81}), with plurisubharmonic functions replacing subharmonic ones, serves a multivariate counterpart to  the  Green function. In this article we will work on the Berkovich projective line $\sP^1$ over an algebraically closed complete $K$. We will fix a point  $\zeta \in \sP^1$ (not necessarily equal to $\infty$) and define a class of subharmonic functions  on $\sP^1 \setminus \{\zeta\}$ with suitable behavior near $\zeta$. The supremum $Q_E$ of this class can be treated  as a non-archimedean analogue of the Leja-Siciak-Zaharjuta extremal function. We will further show that the Green function relative to a point $\zeta$ of a  compact subset of $\sP^1 \setminus \{\zeta\}$ of positive capacity (see subsection \ref{sec: Green} for definitions) equals the upper semicontinuous regularization $Q_E^*$ of $Q_E$.  Our approach is analytic and topological rather than  geometric. In particular, we do not appeal to available results in the (already rich) non-archimedean pluripotential theory. \\

The paper is organized as follows: Throughout,  we provide proofs only for statements that are new in the non-archimedean setting.   When we explicilty state  known results (without proofs), we do so for convenient reference. In Section \ref{sec: foundations} we gather the necessary background in  potential theory on the Berkovich projective line, although we do not always present all details. So this section is mainly a  survey, but a few results therein  are  new in the non-archimedean setting (or at least not explicitly stated in the existing literature). Our first major result is  the Brelot-Cartan principle (Theorem  \ref{prop: Cartanprinciple}), proved in Section \ref{sec: Cartan}. In the proof of this principle we use the Choquet topological lemma, which  requires the underlying topological space to be Hausdorff and second countable. So  we assume  the existence of a countable base of open sets for the Berkovich topology on $\sP^1$, which is anyway compact, and hence Hausdorff. The second countability assumption is only made in this section  and  is not necessary for other results in our paper (which do not rely on the non-archimedean Brelot-Cartan principle, either). In Section \ref{sec: extremal} we fix a point  $\zeta \in \sP^1$ (not necessarily equal to $\infty$) and, for a  set $E$ such that $\overline{E} \subset  \sP^1 \setminus \{\zeta\}$, we define a suitable  class of subharmonic functions  on $\sP^1 \setminus \{\zeta\}$ (an analog of the Lelong class)  and then study properties of its supremum $Q_E$.  This function is a non-archimedean analogue of the Leja-Siciak-Zaharjuta extremal function;  we prove several results that justify this analogy. \\

 Our  main result is the equality between the upper semicontinuous regularization $Q_E^*$ of $Q_E$ and the Green function for $E$ relative to $\zeta$ when $E$ has positive capacity. In Subsection \ref{sec: classical}, we compare the non-archimedean case with the (one-dimensional, complex) classical case. This comparison also has  a survey character, with some historical notes. As a new insight, we point out  that a proof of the equality between the upper semicontinuous regularization of the Siciak-Zaharjuta function and the  Green function can be obtained in the same way as in our non-archimedean proof, instead of relying on separate identities of these two functions with  the Leja extremal function.

\section{Foundations of potential theory on the Berkovich projective line} \label{sec: foundations}

This section  recalls  background notions and results developed  by other researchers in  potential theory on the Berkovich projective line (following mostly \cite{BR10}), and so it can be skipped by readers familiar with the material. However, Proposition \ref{prop: harmlocunifbound} is a new result in the Berkovich setting. 

\subsection{Berkovich projective line as  a topological space}

Let $K$  be an algebraically closed field 
(possibly of characteristic $>0$) that is complete
with respect to a non-trivial and non-archimedean absolute value $|\cdot|$.
The Berkovich projective line $\sP^1=\sP^1(K)$ is 
the Berkovich analytification of the (classical) projective line 
$\bP^1=\bP^1(K)=K\cup\{\infty\}$. Each point in $\sP^1$ corresponds to an equivalence class of multiplicative seminorms on the polynomial ring $K[X,Y]$ extending the absolute value $|\cdot|$. 
The Berkovich upper half space
is $\sH^1=\sH^1(K):=\sP^1\setminus\bP^1$. Taking into account  the correspondences between points, seminorms and sequences of disks, the points in $\sP^1$ can be further classified into four types, with $\bP^1$ being the set of all points of type I. For some fields $K$ the set of points of type IV in $\sP^1(K)$  may be empty. For further details see Chapter 2 of \cite{BR10}, or \cite{Jonsson15}. \\

 We will work only with the Berkovich topology on the Berkovich projective line (also called the weak topology). A basis for this topology  is given in Proposition 2.7  in \cite{BR10}, while  neighborhood bases for the four types  of points are  described in the discussion after Lemma 2.28 in that book. It follows that  $\sP^1$ with the Berkovich topology is locally connected. In particular, the  connected components of open sets are open. We will use this property several times in this paper. Also, $\sP^1$ with the Berkovich topology is  uniquely arcwise connected (\cite{BR10}, Lemma 2.10).\\

Potential theory  on the Berkovich projective line makes extensive use of the tree structure on  $\sP^1$. We will not present here  the definition or general properties of this tree structure, even though it enters the formulation of some definitions and results we need. For details, we refer the reader to \cite{BR10} or \cite{Jonsson15}. 

\begin{definition}
(i) (cf. p. 39, \cite{BR10}) Let $\{\zeta_1,...,\zeta_n\} \subset \sH^1$ be  a finite set of points. The finite subgraph with endpoints $\{\zeta_1,...,\zeta_n\}$  is the intersection of all subtrees of $\sP^1$  containing the set  $\{\zeta_1,...,\zeta_n\}$.\\
(ii) (cf. Definition 2.27, \cite{BR10}) A simple domain is a domain $U \subset \sP^1$ such that $\partial U$ is a nonempty finite set $\{\zeta_1,...,\zeta_n\} \subset \sH^1$, where each $\zeta_i$ is of type II or III.
\end{definition}

\begin{proposition} (Corollary 7.11, \cite{BR10}): \label{prop: exhaustion} If $U \subsetneq \sP^1$ is a domain, then $U$ can be exhausted by a sequence $V_1 \subset V_2\subset ...$ of strict simple domains (i.e., simple domains  with  boundary points all of type II) such that $\overline{V_n} \subset V_{n+1}$ for all $n$.
\end{proposition}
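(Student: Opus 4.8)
The plan is to combine the compactness of the complement $F:=\sP^1\setminus U$ — nonempty because $U\neq\sP^1$ — with the tree structure of $\sP^1$ and its local connectedness. Fix once and for all a base point $x_0\in U$ of type II; such a point exists because every nonempty open subset of $\sP^1$ meets $\sH^1$, the basic open sets of the Berkovich topology being simple domains. The core of the argument is the following local claim: \emph{for every compact connected set $L\subseteq U$ there is a strict simple domain $V$ with $L\subseteq V$ and $\overline V\subseteq U$.} Granting it, the exhaustion follows by a recursion: put $L_1=\{x_0\}$ and choose $V_1\supseteq L_1$ as in the claim; given $V_n$, apply the claim to $L_{n+1}:=\overline{V_n}$ (compact and connected, $V_n$ being a domain) to obtain $V_{n+1}\supseteq\overline{V_n}$. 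This produces strict simple domains with $\overline{V_n}\subseteq V_{n+1}$ for all $n$.

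To prove the local claim, fix a compact connected $L\subseteq U$. For each $\xi\in F$ let $p_\xi$ be the supremum, along the arc $[x_0,\xi]$, of the points $y$ with $[x_0,y]\subseteq U$; then $(x_0,p_\xi)\subseteq U$ and $p_\xi\in\partial U$. Since $U$ is connected and avoids $p_\xi$, it lies in the connected component of $\sP^1\setminus\{p_\xi\}$ containing $x_0$; consequently $L$ is disjoint from $\overline{B_\xi}$, where $B_\xi$ denotes the union of the other components of $\sP^1\setminus\{p_\xi\}$. As $\eta$ runs over $(x_0,p_\xi)$ toward $p_\xi$, the component of $\sP^1\setminus\{\eta\}$ containing $\xi$ shrinks, with intersection contained in $\overline{B_\xi}$; hence, by the finite intersection property in the compact set $L$, one can choose a type II point $\eta_\xi\in(x_0,p_\xi)\subseteq U$ such that the component $W_\xi$ of $\sP^1\setminus\{\eta_\xi\}$ containing $\xi$ satisfies $\overline{W_\xi}\cap L=\emptyset$; here one uses that type II points are dense along arcs in $\sH^1$. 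The sets $\{W_\xi\}_{\xi\in F}$ cover $F$; by compactness extract a finite subcover $W_{\xi_1},\dots,W_{\xi_m}$, with (type II) boundary points $\eta_1,\dots,\eta_m\in U$. Let $V$ be the connected component of $\sP^1\setminus\{\eta_1,\dots,\eta_m\}$ containing $L$, which is a single component since $L$ is connected and misses every $\eta_j$. Then $V$ is open by local connectedness, and $\partial V$ is a nonempty (as $V\neq\sP^1$) finite subset of $\{\eta_1,\dots,\eta_m\}\subseteq\sH^1$ consisting of type II points, so $V$ is a strict simple domain. Finally, each $W_{\xi_i}$ is clopen in $\sP^1\setminus\{\eta_i\}$, so $V$ (a connected component of the smaller set $\sP^1\setminus\{\eta_1,\dots,\eta_m\}$) satisfies either $V\subseteq W_{\xi_i}$ or $V\cap W_{\xi_i}=\emptyset$; since $V$ meets $L$ while $W_{\xi_i}\cap L=\emptyset$, the second alternative holds for every $i$, whence $V\cap F=\emptyset$, i.e.\ $V\subseteq U$ and $\overline V=V\cup\partial V\subseteq U$.

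The step I expect to be the real obstacle is not the local construction but the passage to a single increasing sequence: the recursion above only yields a \emph{nested} family, and one must additionally guarantee $\bigcup_n V_n=U$, since a priori the $V_n$ could grow too slowly and exhaust only a proper subdomain. The remedy is to demand at each stage that $V_{n+1}$ absorb, besides $\overline{V_n}$, some further prescribed compactum, chosen so that every point of $U$ is eventually captured; this presupposes that $U$ is a countable union of compacta. That is immediate when $\sP^1$ is second countable ($U$ is then Lindel\"of, and a countable subfamily of the strict simple subdomains with closure in $U$ provided by the local claim already covers $U$), but for a general complete algebraically closed $K$ the space $\sP^1$ need not be second countable — which is exactly why second countability is imposed in Section~\ref{sec: Cartan} — and the required countability must instead be extracted from the presentation of $\sP^1$ as an inverse limit of finite subtrees together with the $\sigma$-compactness of simple domains. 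The complete argument along these lines is \cite[Corollary 7.11]{BR10}, to which we refer for the details.
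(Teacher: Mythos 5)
The paper gives no argument of its own here: the proposition is quoted from \cite{BR10} (Corollary 7.11) under the paper's stated convention of listing known results for reference, so your proposal has to stand or fall as a complete proof. Its first half does stand: the local claim — every compact connected $L\subseteq U$ is contained in a strict simple domain $V$ with $\overline V\subseteq U$ — is correct, and your construction (the exit points $p_\xi\in\partial U$, the shrinking components $W_\eta$ of $\sP^1\setminus\{\eta\}$ for $\eta\in(x_0,p_\xi)$, the finite-intersection argument in $L$, the finite subcover of $F=\sP^1\setminus U$, and the clopen-in-$\sP^1\setminus\{\eta_i\}$ argument showing $V\cap F=\emptyset$) is sound; it uses exactly the tree-theoretic facts you invoke (connected subsets of $\sP^1$ are convex, components of the complement of a point are the tangent directions, type II points are dense on arcs).

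The genuine gap is the one you yourself flag and then do not close: the recursion produces a nested sequence of strict simple domains with $\overline{V_n}\subset V_{n+1}\subseteq U$, but the proposition asserts an \emph{exhaustion}, i.e.\ $\bigcup_n V_n=U$, and nothing in your argument forces this (the sequence could stabilize inside a small subdomain). Your proposed remedy presupposes that $U$ is a countable union of compacta, which is precisely the nontrivial content of the statement: since $\overline{V_n}\subset V_{n+1}$, the conclusion is equivalent to $\sigma$-compactness of $U$, and this is not a routine detail — the obvious candidate $K_n=\{z:\inf_{x\in F}[z,x]_g\ge 1/n\}$ fails because the Hsia kernel does not vanish at points of $\sH^1$, so these compacta can meet $F$. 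Retreating to second countability does not help either: the proposition is stated (and used by the paper, e.g.\ in the proof of Theorem \ref{theorem: equality}) for arbitrary complete algebraically closed $K$, with second countability assumed only in Section \ref{sec: Cartan}; and your closing appeal to \cite[Corollary~7.11]{BR10} ``for the details'' is an appeal to the very statement being proved, so it cannot complete the argument. To have a proof you would need to supply an increasing sequence of compact subsets of $U$ covering $U$ (after which your local claim, applied say to $\overline{V_n}$ together with the convex hull of the $n$-th compactum and $x_0$, finishes the job); as written, the proposal proves strictly less than the proposition.
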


Although the Berkovich topology on $\sP^1$ is not metrizable in general, an analog of the chordal metric plays an important role in the potential theory. Namely, 
the spherical kernel, or the  Hsia kernel $[x,y]_g$ on $\sP^1$ with respect to the Gauss point $g$ (a distinguished point in the Berkovich unit disk) 
is the unique upper semicontinuous and separately continuous extension to $\sP^1\times\sP^1$
of the chordal metric 
\[
[z,w] = \frac{|x_1y_2-x_2y_1|}{\max\{|x_1|,|y_1|\} \max\{|x_2|,|y_2|\}}
\] 
defined for $z=(x_1 :y_1), w=(x_2:y_2)$ in $\bP^1\times\bP^1$. More generally, for each $\zeta\in\sP^1$,
the generalized Hsia kernel on $\sP^1$ with respect to $\zeta$ is defined as
\begin{gather*}
[x,y]_{\zeta}:=[x,y]_g/([x,\zeta]_g[y,\zeta]_g)\quad\text{on }\sP^1\times\sP^1. 
\end{gather*}

Note that the kernel function 
$[x,y]_{\zeta}$ satisfies the strong triangle inequality
\begin{gather*}
 [x,y]_{\zeta}\le\max\bigl\{[x,z]_{\zeta},[z,y]_{\zeta}\bigr\}
\quad\text{for any }x,y,z\in \sP^1.
\end{gather*}

Let us fix a $\zeta\in\sP^1$. The generalized spherical kernel  $[\cdot,\cdot]= [\cdot,\cdot]_{\zeta}$ has the following properties:  (cf. \S 4.3 in \cite{BR10}): (i) $0 \leq [x,y] \leq 1$ (Formula 4.19)\\
(ii) (Proposition 4.7 (A), \cite{BR10}) $[\cdot,\cdot]$ is continuous on the complement of the diagonal and at every $(x,x) \in \mathbb{P}^1\times \mathbb{P}^1$;\\
(iii) (Proposition 4.7 (D), \cite{BR10}) For each $a \in \sP^1$ and $r \in \mathbb{R}$, the closed ball $\mathcal{B}(a,r):=\{x \in \sP^1: [x,a]\leq r\}$ is connected and closed in the Berkovich topology. It is empty if $r < [a,a]$ and coincides with   $\mathcal{B}(b,r)$ for some $b \in \mathbb{P}^1$ if $r >[a,a] =: {\rm diam }(a)$ or if $r= {\rm diam }(a)$ and $a$ is of type II or III. If $r= {\rm diam }(a)$ and $a$ is of type I or IV, then $\mathcal{B}(a,r)=\{a\}$.\\

Finally, for an arbitrary function $f: \sP^1 \to \mathbb{R}$ we define its upper semicontinuous regularization as  $f^*(z):= \limsup_{y \to z} f(y)$.

\subsection{Subharmonic functions}\label{sec: sh} In this subsection we    recall the basics on  harmonic and subharmonic functions  on the Berkovich projective line. We need to introduce the Laplacian on $\sP^1$,  but the available definitions  are all quite involved, so we will only outline the theory, mostly following  \cite{BR10}. \\

Let $\Gamma$ be a finite subgraph in $\sP^1$ (viewed as a metric graph with the path distance $\rho$ on $\sP^1$; see Section 2.7 in \cite{BR10} for the definition of $\rho$).

\begin{definition} (see Section 3.2 in \cite{BR10})
 A function $f: \Gamma \to \mathbb{R}$ is piecewise affine on $\Gamma$ if there is a set $S_f \subset \Gamma$  such that (i) $\Gamma \setminus S_f$ is a union of intervals, each of which has two distinct endpoints in $\Gamma$ and (ii) $f$ is affine on each interval in $\Gamma \setminus S_f$ with respect to its arclength parametrization. By $CPA(\Gamma)$ we denote the class of continuous, piecewise affine real-valued functions on $\Gamma$.
\end{definition}

For  a point $x \in \sP^1$ the set $\sP^1 \setminus \{x\}$ does not have to be connected. The connected components of $\sP^1 \setminus \{x\}$ can be identified with the tangent directions $\vec{v} \in T_x$ at $x$, defined as certain equivalence classes of paths emanating from $x$ (see Appendix B of \cite{BR10} and the end of Section 3.1 in \cite{BR10}).  For any function $f \in CPA(\Gamma)$, any $p \in \Gamma$ and any tangent direction $\vec{v}$ to $\Gamma$ at $p$, the directional derivative $d_{\vec{v}}f(p):=\lim_{t \to 0^+}\frac{f(\gamma(t))-f(p))}{t}$, where $\gamma$ is a representative path emanating from $p$, exists and is finite. \\

The Laplacian $\Delta$ is first defined for $f \in CPA(\Gamma)$, as 

\[
\Delta(f):=-\sum_{p \in \Gamma}\biggl (\sum_{\vec{v}\in T_p(\Gamma)} d_{\vec{v}}f(p) \biggr )\delta_p,
\]
where $\delta_p$ denotes the Dirac  measure at $p$.  Then the class of functions  $BDV (\Gamma)$ is defined (see Section 3.5 in \cite{BR10}) and the  Laplacian $\Delta_\Gamma(f)$ is defined for $f \in BDV (\Gamma)$ (Theorem 3.6 in \cite{BR10}) extending the  operator $\Delta$. That is, $\Delta_\Gamma(f) =\Delta(f)$ when $f \in CPA(\Gamma) (\subset BDV (\Gamma)).$\\

Let $U \subset \sP^1$ be a domain. Using the natural retraction maps $r_{\overline{U},\Gamma}$ for each finite subgraph $\Gamma \subset U$ (see Section 2.5 in \cite{BR10}), the  Laplacian is extended to $\Delta_{\overline{U}}(f)$ for functions $f$ in the class $BDV(U)$ (for the precise definitions, see  Definitions 5.11 and 5.15 in \cite{BR10}). Each such $\Delta_{\overline{U}}(f)$ is a finite signed Borel measure on $\overline{U}$. One further defines $\Delta_U(f):=\Delta_{\overline{U}}(f)|_U$ and $\Delta_{\partial U}(f):=\Delta_{\overline{U}}(f)|_{\partial U}$ for $f \in BDV(U)$. \\

Example (Example 5.19, \cite{BR10}): Fix $\zeta \in \sP^1$ and $y \neq \zeta$. Let $f(x):=-\log [x,y]_\zeta$. Then $f \in BDV(\sP^1)$ and $\Delta_{\sP^1}(f)=\delta_y(x)-\delta_\zeta(x)$.\\

Let $U$ be an open set in $\sP^1$.
\begin{definition} (Definition 8.1, \cite{BR10}) (i) A function $f: U \to [-\infty,\infty)$ is  subharmonic in $U$ if for each $x \in U$ there is a domain $V_x \subset U$ with $x \in V_x$ such that $f \in BDV(V_x)$, $\Delta_{V_x}(f) \leq 0$, $f$ is upper semicontinuous in $V_x$, and for each $z \in V_x \cap \mathbb{P}^1$, $f(z)=\limsup_{V_x \cap \sH^1 \ni y \to z}f(y)$.\\
(ii)  A function $f: U \to \mathbb{R}$ is harmonic in $U$ if and only if $f$ and $-f$ are subharmonic in $U$.
\end{definition}

Many properties of harmonic and subharmonic functions known from the classical potential theory hold also in the Berkovich setting. We now list those which will be used later in the paper.\\

\begin{proposition} (Maximum principle; Proposition 8.14 (B) in \cite{BR10}) \label{prop: maxprinciple} Let $U \subsetneq \sP^1$ be a domain and let $f$ be a subharmonic function in $U$. Then, if $M \in \mathbb{R}$ is such that, for each $y \in \partial U$, $\limsup_{U \ni z \to y} f(z) \leq M$, then $f(z) \leq M$ for all $z \in U$.
\end{proposition}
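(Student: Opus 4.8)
The plan is to run the classical strong‑maximum‑principle argument --- ``the set on which the supremum is attained is open and closed'' --- adapted to the tree structure of $\sP^1$. First I would pass to the compact set $\overline U$ by extending $f$ to $\tilde f$ with $\tilde f|_U=f$ and $\tilde f(y):=\limsup_{U\ni z\to y}f(z)$ for $y\in\partial U$; the extension is finite by hypothesis, and the standard fact that the upper semicontinuous regularization of an upper semicontinuous function is again upper semicontinuous shows $\tilde f$ is upper semicontinuous on $\overline U$, hence attains its maximum there. Suppose, towards a contradiction, that $m:=\sup_U f>M$. Since $\tilde f\le M<m$ on $\partial U$, we get $\sup_{\overline U}\tilde f=m$ and the maximum is attained at some $x_0\in U$; thus $f(x_0)=m$. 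Put $A:=\{x\in U:f(x)=m\}=\{x\in U:f(x)\ge m\}$. Upper semicontinuity makes $A$ closed in $U$, and $x_0\in A\ne\emptyset$. If $A$ is also open in $U$, connectedness of the domain $U$ forces $A=U$, i.e. $f\equiv m$ on $U$; but then $M\ge\tilde f(y)=m$ for every $y\in\partial U$ --- a nonempty set since $U\subsetneq\sP^1$ and $\sP^1$ is connected --- contradicting $m>M$. So everything reduces to showing that $A$ is open.

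The openness of $A$ is where the Laplacian and the tree structure genuinely enter. Fix $x\in A$, take the domain $V=V_x\subseteq U$ furnished by Definition 8.1 (so $f\in BDV(V)$, $\Delta_V f\le 0$, $f$ upper semicontinuous on $V$, and $f(z)=\limsup_{V\cap\sH^1\ni y\to z}f(y)$ for $z\in V\cap\bP^1$), and shrink $V$ using Proposition \ref{prop: exhaustion} to a strict simple domain with $\overline V$ inside the original neighborhood. For a finite subgraph $\Gamma$ in $\overline V$ that contains $\partial V$ and $x$, the compatibility of the Laplacian with the retraction onto $\Gamma$ gives $\Delta_\Gamma(f|_\Gamma)\le 0$ on $\Gamma$ away from its endpoints; with the present sign convention for $\Delta$ this says precisely that $f|_\Gamma$ is convex along each edge of $\Gamma$ and that $\sum_{\vec v\in T_x(\Gamma)}d_{\vec v}(f|_\Gamma)(x)\ge 0$. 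Since $x$ is a maximum point of $f|_\Gamma$, each $d_{\vec v}(f|_\Gamma)(x)\le 0$, so all of them vanish; convexity of $f|_\Gamma$ along each edge issuing from $x$, starting with derivative $0$, then forces $f|_\Gamma$ to be non-decreasing along that edge, hence $f\ge f(x)=m$, hence $f\equiv m$ there because $f\le m$ on $U$. Iterating through the finitely many vertices of $\Gamma$ gives $f|_\Gamma\equiv m$. Letting $\Gamma$ range over a family of finite subgraphs filling up $V\cap\sH^1$ yields $f\equiv m$ on $V\cap\sH^1$, and the defining limsup condition carries this to $V\cap\bP^1$. Hence $A\cap\sH^1$ is open in $\sH^1$.

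What remains --- and what I expect to be the one genuinely non-classical point --- is to deduce that $A$ is open in $U$, i.e. to handle the type-I points of $A$. If $A\cap\sH^1\ne\emptyset$ this is immediate: $A\cap\sH^1$ is then clopen in the connected set $U\cap\sH^1$ (deleting type-I points, which are ends of the tree $\sP^1$, does not disconnect a domain), so $A\supseteq U\cap\sH^1$, and then $A=U$ by the limsup condition. The delicate case is $A\subseteq\bP^1$: here $f<m$ throughout $U\cap\sH^1$ while $f(x_0)=m$ at some type-I point $x_0\in U$. I would kill this using a small ball $\mathcal{B}(x_0,s)\subseteq V_{x_0}$ around $x_0$ (Property (iii) above), whose boundary is a single point $\eta_s$ of type II or III: the sub-mean-value inequality for subharmonic functions, applied at $x_0$ with this ball (equivalently, the maximum principle for the ball, whose only boundary point is $\eta_s$), gives $m=f(x_0)\le f(\eta_s)\le m$, so $f(\eta_s)=m$ with $\eta_s\in\sH^1$ --- contradicting $f<m$ on $U\cap\sH^1$. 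The step I would treat most carefully is precisely the inequality $f(x_0)\le f(\eta_s)$: it is a standard property of subharmonic functions (the sub-mean-value characterization, or a consequence of the harmonic-measure representation, where the harmonic measure of the ball seen from its type-I interior point is $\delta_{\eta_s}$), but one must invoke it in a form that does not secretly presuppose the statement being proved. Everything else is the classical clopen argument transported to the Berkovich tree.
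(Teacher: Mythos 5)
First, note that the paper does not prove this statement at all: it is quoted verbatim as Proposition 8.14 (B) of \cite{BR10}, so there is no in-paper argument to compare yours with; I can only judge your reconstruction on its own merits. Its overall architecture (reduce to the strong maximum principle, show the set $A=\{f=m\}$ is clopen via convexity of $f$ along finite subgraphs, handle type-I points separately) is sound, and your insistence that $\Gamma$ contain $\partial V$ is exactly the right device: it forces the positive boundary mass of $\Delta_{\overline V}(f)$ to stay on $\partial V$ under retraction, while the geodesic between two points of the open connected set $V$ never meets $\partial V$, so only nonpositive mass is seen along the propagation route. (Two small imprecisions there: the correct statement is ``$\Delta_\Gamma(f|_\Gamma)\le 0$ away from $\partial V$'' rather than ``away from the endpoints of $\Gamma$'', since boundary points of $V$ need not be endpoints of $\Gamma$; and you should also say a word about a maximum attained at a type IV point, which lies in $\sH^1$ but is an end of the tree, so the branch-point argument degenerates to the single-direction argument.)

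The genuine gap is the step you yourself flag and then leave open: the inequality $f(x_0)\le f(\eta_s)$ at a type-I maximum point. You propose to get it from ``the sub-mean-value inequality'' or from the fact that the harmonic measure of the ball seen from $x_0$ is $\delta_{\eta_s}$; but in the Baker--Rumely development every such statement (comparison of subharmonic with harmonic functions on simple domains, Poisson/harmonic-measure representations, Riesz decomposition) is established \emph{after}, and by means of, Proposition 8.14, so quoting any of them here is circular, and you never supply the promised non-circular form. The gap is closable by the very technique of your second paragraph: for $y\in\mathcal{B}(x_0,s)\cap\sH^1$ retract the Laplacian to the segment $\Gamma=[y,\eta_s]$; since every point outside $\mathcal{B}(x_0,s)$ retracts into $\Gamma$ through the unique boundary point $\eta_s$, all positive mass lands at the endpoint $\eta_s$, so $f|_\Gamma$ is convex on $[y,\eta_s)$ and has $d_{\vec v}f(y)\ge 0$ in the direction of $\eta_s$; hence $f$ is nondecreasing along the segment and $f(y)\le f(\eta_s)$ by upper semicontinuity at $\eta_s$. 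Taking $\limsup_{\sH^1\ni y\to x_0}$ and using the defining limsup property of $f$ at type-I points gives $f(x_0)\le f(\eta_s)$ with no appeal to the statement being proved (and here it matters that $x_0\in\bP^1$, since only for such points do the balls $\mathcal{B}(x_0,s)$ form a neighborhood basis inside $V_{x_0}$). Until an argument of this kind is written out, your proof of the type-I case rests on a circular citation and is incomplete precisely where you predicted it would be delicate.
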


\begin{proposition} (Proposition 8.26 (E) in \cite{BR10})  \label{prop: netsbounded} Let $U \subset \sP^1$ be an open set and let $\langle f_\alpha\rangle_{\alpha \in A}$ be a net of subharmonic functions in $U$ which is locally uniformly bounded from above. Put $f(x)=\sup_{\alpha \in A} f_\alpha(x)$. Then $f^*$ is subharmonic in $U$ and $f^*(x)=f(x)$ for all $x \in U \cap \sH^1$.
\end{proposition}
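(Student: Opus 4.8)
The plan is to adapt the classical argument that the upper semicontinuous regularization of a locally uniformly bounded family of subharmonic functions is subharmonic, using the maximum principle (Proposition~\ref{prop: maxprinciple}) in place of sub-mean-value inequalities. I would start with routine reductions. The function $f^*$ is automatically upper semicontinuous; it is $<+\infty$ everywhere by the local bound from above; and $f^*\ge f\ge f_\alpha$ shows it is not $\equiv-\infty$ on any component meeting some $\{f_\alpha>-\infty\}$. Since a finite maximum of subharmonic functions is subharmonic, I may replace $\langle f_\alpha\rangle_{\alpha\in A}$ by the net $\langle\,\max_{\alpha\in B}f_\alpha\,\rangle$ indexed by the finite subsets $B\subset A$ ordered by inclusion; this net is monotone increasing, consists of subharmonic functions, and has the same supremum $f$. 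So I assume $\langle f_\alpha\rangle$ is increasing.

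Next I would show $f^*$ is subharmonic on $U$ by verifying, besides upper semicontinuity and the type~I boundary condition, the comparison property that characterizes subharmonicity (see \cite{BR10}): for every strict simple subdomain $V$ with $\overline V\subset U$ and every $h$ harmonic on $V$, continuous on $\overline V$, with $f^*\le h$ on $\partial V$, one has $f^*\le h$ on $V$. (Such $V$ exist by Proposition~\ref{prop: exhaustion}.) Indeed, each $f_\alpha$ satisfies $f_\alpha\le f\le f^*\le h$ on $\partial V\subset U$, so $f_\alpha-h$ is subharmonic on $V$ with $\limsup_{V\ni z\to y}(f_\alpha-h)(z)\le0$ for every $y\in\partial V$; Proposition~\ref{prop: maxprinciple} gives $f_\alpha\le h$ on $V$, hence $f\le h$ on $V$, hence $f^*\le h$ on $V$ since $h$ is continuous. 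The type~I boundary condition for $f^*$ I would check by hand: ``$\le$'' is upper semicontinuity, and for ``$\ge$'' one takes $y_\lambda\to z$ with $f(y_\lambda)\to f^*(z)$, picks near-optimal indices $\alpha_\lambda$, and where $y_\lambda\in\bP^1$ uses the type~I condition for the subharmonic $f_{\alpha_\lambda}$ at $y_\lambda$ to move $y_\lambda$ to a nearby $w_\lambda\in\sH^1$ with $f^*(w_\lambda)\ge f_{\alpha_\lambda}(w_\lambda)$ still close to $f^*(z)$; arranging $w_\lambda\to z$ (directed-set bookkeeping) finishes it.

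The last assertion, $f^*(x)=f(x)$ for $x\in U\cap\sH^1$ --- equivalently, that the exceptional set $\{f<f^*\}$ lies inside $\bP^1$ --- is the step I expect to be the main obstacle, and I note that the Brelot--Cartan principle (Theorem~\ref{prop: Cartanprinciple}), proved later under a second-countability assumption, is not available here. I would argue locally near a fixed $x_0\in U\cap\sH^1$: choose a small strict simple domain $V\ni x_0$ with $\overline V\subset U$ and write each $f_\alpha=h_\alpha-p_\alpha$ on $V$, with $h_\alpha$ the harmonic function on $V$ having boundary values $f_\alpha|_{\partial V}$ and $p_\alpha\ge0$ the Green potential of $\mu_\alpha:=-\Delta_V(f_\alpha)\ge0$. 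The bound $f_\alpha\le M$ near $x_0$, with Harnack's inequality, controls the $h_\alpha$ uniformly near $x_0$; monotonicity gives $f_\alpha(x_0)\ge f_{\alpha_0}(x_0)$, so a Jensen-type estimate bounds the masses of $\mu_\alpha$ on a fixed neighbourhood of $x_0$ uniformly in $\alpha$. The decisive point is that the potentials $p_\alpha$ are then equicontinuous at $x_0$: this holds because $x_0\in\sH^1$, so the kernel $-\log[\cdot,\cdot]_\zeta$ (hence the Green function $g_V$) has no singularity along the diagonal at $x_0$ --- $[\cdot,\cdot]_\zeta$ is continuous off the diagonal and separately continuous everywhere, and $[x_0,x_0]_\zeta=\diam(x_0)>0$ --- in sharp contrast with a type~I point $z_0$, where $-\log[z_0,w]_\zeta\to+\infty$ as $w\to z_0$. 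Equicontinuity of the $h_\alpha$ and the $p_\alpha$ at $x_0$ makes $f=\sup_\alpha f_\alpha$ continuous at $x_0$, so $f^*(x_0)=f(x_0)$. The genuinely delicate part is exactly this uniformity in $\alpha$ of the modulus of continuity at $x_0$, and it is where the special structure of the Berkovich tree at points of $\sH^1$ (their non-polarity, their interior position in the metric graph, the regularity there of the fundamental potential) must be used; it is what confines the exceptional set to type~I points.
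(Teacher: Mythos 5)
You should first note that the paper does not prove this proposition at all: it is quoted as Proposition 8.26(E) of \cite{BR10} for reference, so there is no in-paper argument to compare with; what follows is an assessment of your argument on its own terms (the proof in \cite{BR10} is in any case organized differently, via the structure of subharmonic functions restricted to finite subgraphs and the Poisson formula on simple domains, not via a Riesz-decomposition equicontinuity argument). Your reduction to an increasing net and your verification that $f^*$ is subharmonic (domination on simple subdomains via Proposition~\ref{prop: maxprinciple}, plus the type~I limsup condition, invoking the characterization of subharmonicity in \cite{BR10}, Theorem 8.19) are fine as sketches. The genuine gap is in the step you yourself single out as decisive. The claimed equicontinuity of the potentials $p_\alpha$ at $x_0\in\sH^1$, justified by the assertion that the kernel ``has no singularity along the diagonal at $x_0$,'' is false. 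The paper's quoted Proposition 4.7(A) of \cite{BR10} gives continuity of $[\cdot,\cdot]_\zeta$ off the diagonal and at diagonal points $(x,x)$ with $x\in\bP^1$ only; at $(x_0,x_0)$ with $x_0\in\sH^1$ the kernel is merely upper semicontinuous. Every weak neighborhood of $x_0$ contains type~I points $y$, where $[y,y]_\zeta=0$, so $-\log[\cdot,\cdot]_\zeta$ is unbounded above arbitrarily close to $(x_0,x_0)$. Concretely, measures $\mu_\alpha=\delta_{y_\alpha}$ with type~I points $y_\alpha\to x_0$ weakly do arise as Riesz measures of an admissible, locally uniformly bounded net (e.g.\ $f_\alpha=\log[\cdot,y_\alpha]_\zeta$ suitably normalized); then $p_\alpha(x_0)$ stays bounded while $p_\alpha(y_\alpha)=+\infty$, so $\{p_\alpha\}$ is not equicontinuous at $x_0$, and $f=\sup_\alpha f_\alpha$ need not be continuous there. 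What is true, and all that is needed (only upper semicontinuity of $f$ at $x_0$ is required), is a one-sided uniform estimate: by the ultrametric inequality $[x,w]_\zeta\le\max\{[x,x_0]_\zeta,[x_0,w]_\zeta\}$ and $[x_0,w]_\zeta\ge\diam(x_0)>0$ for all $w$, one gets, for $x$ in the weakly open set $\{x:[x,x_0]_\zeta<(1+\delta)\diam(x_0)\}$, the bound $p_\alpha(x)\ge p_\alpha(x_0)-\log(1+\delta)\,\mu_\alpha(\overline V)$, uniformly in $\alpha$ once the masses are uniformly bounded. So your heuristic (positive diameter at points of $\sH^1$, in contrast with type~I points) is the right one, but it must be implemented one-sidedly; as a two-sided equicontinuity claim it contradicts the very kernel property the paper records.

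A secondary but real gap is the uniform mass bound, which you assert via ``a Jensen-type estimate'' but whose sketch is circular: to bound $h_\alpha(x_0)$ from above you need the boundary values of $h_\alpha=f_\alpha+p_\alpha$ on $\partial V$, hence an a priori bound on $p_\alpha|_{\partial V}$, i.e.\ on $\mu_\alpha(\overline V)$ --- the quantity being estimated. This can be repaired, for instance by a representation in which the potential term vanishes on $\partial V$: use the harmonic-measure (Poisson) formula on the simple domain $V$, whose boundary is a finite subset of $\sH^1$ where $f_\alpha\le M$, together with the Green kernel of $V$, bounded below by a positive constant on a smaller neighborhood of $x_0$; evaluating at $x_0$ and using $f_\alpha(x_0)\ge f_{\alpha_0}(x_0)$ (which your finite-max reduction legitimately provides) then bounds $\mu_\alpha$ near $x_0$ uniformly. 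But none of this machinery is among the results quoted in the paper, and Proposition~\ref{prop: Riesz} as stated (potential relative to an external $\zeta$, not vanishing on $\partial V$) does not by itself deliver it, so as written the argument has a hole at exactly the two points where uniformity in $\alpha$ is needed. Your use of Harnack (Proposition~\ref{prop: Harnackineq} and Proposition~\ref{prop: harmlocunifbound}) for the harmonic parts is fine once those bounds are in place.
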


\begin{proposition} (Harnack's inequality; Lemma 8.33, \cite{BR10}) \label{prop: Harnackineq} Let $U \subset \sP^1$ be a domain. then for each $x_0 \in U$ and each compact set $X \subset U$ there is a constant $C=C(x_0,X)$ such that for each function which is harmonic and nonnegative in $U$, each $x \in X$ satisfies
\[
(1/C)\cdot h(x_0) \leq h(x) \leq C \cdot h(x_0).
\]

\end{proposition}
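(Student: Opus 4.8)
The plan is to reduce the statement to a \emph{local} Harnack inequality valid on small simple domains and then to deduce the global statement by a connectedness-and-compactness argument, just as in the classical one-variable theory. (We may assume $U\neq\sP^1$, since a nonnegative harmonic function on the compact space $\sP^1$ is constant.) The local step I want to prove is: for each $p\in U$ there are a strict simple domain $V$ with $p\in V$ and $\overline V\subset U$, say $\partial V=\{\zeta_{1},\dots,\zeta_{n}\}\subset\sH^1$, and a strict simple domain $V'$ with $p\in V'$ and $\overline{V'}\subset V$ (both furnished by local connectedness of $\sP^1$ together with Proposition~\ref{prop: exhaustion} applied to a small connected neighborhood of $p$), and a constant $C_{V'}\ge 1$ depending only on the pair $V'\subset V$ and not on $h$, such that $(1/C_{V'})\,h(x')\le h(x)\le C_{V'}\,h(x')$ for all $x,x'\in V'$ and every $h$ harmonic and nonnegative on $U$.

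To carry out the local step I would invoke the continuity of harmonic functions together with the solvability and uniqueness of the Dirichlet problem on simple domains (\cite{BR10}, Chapter~7): a harmonic $h\ge 0$ on $U$ is continuous on the neighborhood $\overline V$ of the finite set $\partial V$, hence is given on $V$ by its harmonic measure,
\[
h(x)=\sum_{i=1}^{n}\omega_{i}(x)\,h(\zeta_{i}),\qquad x\in V,
\]
where $\omega_{i}$ is the harmonic function on $V$ with boundary values $\omega_{i}(\zeta_{j})=\delta_{ij}$; so $0\le\omega_{i}\le 1$, $\sum_{i}\omega_{i}\equiv 1$, and the $\omega_{i}$ depend only on $V$. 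The decisive point is that $\omega_{i}>0$ throughout $V$. By the minimum principle (Proposition~\ref{prop: maxprinciple} applied to $-\omega_{i}$) one has $\omega_{i}\ge 0$; for strict positivity I would use the tree structure: on the finite subgraph $\Gamma=\Gamma(\zeta_{1},\dots,\zeta_{n})$ the function $\omega_{i}$ restricts to a continuous, piecewise affine, balanced function with nonnegative endpoint data that is not identically zero, hence is positive at every non-endpoint of $\Gamma$, while on each connected component $B$ of $V\setminus\Gamma$ the function $\omega_{i}$ is harmonic with the single boundary point where $B$ attaches to $\Gamma$, hence (by the maximum principle applied to $\pm\omega_i$ on $B$) is constant there, equal to its positive value at that attaching interior point of $\Gamma$. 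Since $\overline{V'}$ is a compact subset of $V$ meeting none of the $\zeta_{j}$, continuity of the $\omega_i$ yields $0<c:=\min_{i}\min_{\overline{V'}}\omega_{i}\le\max_{i}\max_{\overline{V'}}\omega_{i}=:C'\le 1$. Then for $x,x'\in V'$ one gets $h(x)=\sum_{i}\omega_{i}(x)h(\zeta_{i})\le C'\sum_{i}h(\zeta_{i})$ and $h(x')\ge c\sum_{i}h(\zeta_{i})$, so $h(x)\le(C'/c)\,h(x')$; by symmetry this gives the two-sided local bound with $C_{V'}:=C'/c$.

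For the globalization, let $A$ be the set of $x\in U$ for which some constant $C_{x}\ge 1$ satisfies $(1/C_{x})h(x_{0})\le h(x)\le C_{x}h(x_{0})$ for every harmonic $h\ge 0$ on $U$. Then $x_{0}\in A$ with $C_{x_{0}}=1$, and the local step shows $A$ is open (if $x\in A$ with constant $C_x$ and $V'$ is a local Harnack neighborhood of $x$ with constant $C_{V'}$, then every point of $V'$ belongs to $A$ with constant $C_{x}C_{V'}$) and closed in $U$ (any $x\in U$ whose local Harnack neighborhood meets $A$ belongs to $A$); since $U$ is connected, $A=U\supseteq X$. To replace the pointwise constants by a single one on $X$, cover the compact set $X$ by finitely many local Harnack neighborhoods $V'_{1},\dots,V'_{m}$ of points $p_{1},\dots,p_{m}$, and set $C:=\max_{1\le j\le m}C_{p_{j}}C_{V'_{j}}$ (the case $h\equiv 0$ being trivial); every $x\in X$ lies in some $V'_{j}$ and so satisfies $(1/C)h(x_{0})\le h(x)\le C h(x_{0})$. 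The only genuinely delicate ingredient is the strict positivity of the harmonic measures $\omega_{i}$ on $V$ — equivalently, a strong minimum principle for harmonic functions on Berkovich domains — whereas the harmonic-measure representation on simple domains and the chaining argument are routine and parallel the classical proof.
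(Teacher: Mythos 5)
The paper offers no proof of this proposition at all --- it is quoted for reference from \cite{BR10} (Lemma 8.33) --- and your argument is a correct reconstruction along the standard lines: Poisson/harmonic-measure representation $h=\sum_i\omega_i\,h(\zeta_i)$ on a simple subdomain, strict positivity of the harmonic measures, a two-sided local bound on a smaller subdomain, and then a connectedness-plus-compactness chaining argument. Two small remarks: the strict positivity of the $\omega_i$ does not need the hands-on skeleton analysis, since it follows immediately from the strong form of the maximum principle for harmonic functions in \cite{BR10} (a nonconstant harmonic function on a domain attains no interior extremum), applied to $\omega_i\ge 0$ with $\omega_i(\zeta_i)=1$; and your tree argument tacitly assumes $\#\partial V\ge 2$, so you should record the degenerate case $\#\partial V=1$, where $\Gamma$ has no interior points, the Poisson formula forces $h$ to be constant on $V$, and the local bound holds trivially with constant $1$.
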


The default topology on the set $SH(U)$ of subharmonic functions in a domain $U \subset \sP^1$ is that of pointwise convergence on $U \cap \sH^1$. The space $\mathcal{M}^+(U)$ of positive locally finite Borel measures on $U$ can be given the following topology: a net of measures  $\langle \mu_\alpha\rangle_{\alpha \in A}$ in $\mathcal{M}^+(U)$ converges (weakly) to a measure $\mu \in \mathcal{M}^+(U)$ if and only if $\int f d\mu_\alpha \to \int f d\mu$ for all $f \in \mathcal{C}_c(U)$. Here $\mathcal{C}_c(U)$ is the space of all continuous functions $f: U \to \mathbb{R}$ for which there exists a compact subset $X_f \subset U$ such that $f|_{U\setminus X_f}\equiv 0$. The following continuity result will be used.

\begin{proposition} (Theorem 8.44, \cite{BR10})  \label{prop: contLaplacian} Let $SH(U)$ and $\mathcal{M}^+(U)$ be the topological spaces as described above. Then  the operator $-\Delta_U: SH(U)\to \mathcal{M}^+(U)$ is continuous.
\end{proposition}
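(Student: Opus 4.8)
The plan is to verify the defining condition for weak convergence of the measures $-\Delta_U f_\alpha$ by first testing against a small explicit class of functions and then bootstrapping to all of $\mathcal{C}_c(U)$ by means of a uniform local mass bound. Fix a net $\langle f_\alpha\rangle_{\alpha\in A}$ converging to $f$ in $SH(U)$, that is, $f_\alpha(x)\to f(x)$ for every $x\in U\cap\sH^1$; each of the measures $-\Delta_U f_\alpha$ and $-\Delta_U f$ is positive and locally finite (subharmonic functions have nonpositive Laplacian, \cite{BR10}), so it remains to prove $\int_U\phi\,d(-\Delta_U f_\alpha)\to\int_U\phi\,d(-\Delta_U f)$ for every $\phi\in\mathcal{C}_c(U)$. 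Call $\phi$ \emph{smooth} if it is compactly supported in $U$ and factors through the retraction onto a finite subgraph $\Gamma\subset U$, that is, $\phi$ restricts to an element of $CPA(\Gamma)$ and is constant on each connected component of $U\setminus\Gamma$. The first — and, I expect, most technical — ingredient is an approximation statement from the structure theory of $\sP^1$: smooth functions are dense in $\mathcal{C}_c(U)$ for uniform convergence, and every $\phi\in\mathcal{C}_c(U)$ can moreover be uniformly approximated by smooth functions supported inside any prescribed compact neighbourhood of $\supp\phi$. This is proved by exhausting the relevant domains by strict simple domains (Proposition \ref{prop: exhaustion}), using the explicit neighbourhood bases of the Berkovich topology to make the components of $U\setminus\Gamma$ arbitrarily small, and using the density of $CPA(\Gamma)$ in $C(\Gamma)$.

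The point of smooth test functions is that $\Delta_U\phi$ is then a finite signed measure supported on a finite set $\{p_1,\dots,p_N\}\subset U\cap\sH^1$ — namely the vertices and break points of $\Gamma$, which are of type II or III — with weights $c_i=-\sum_{\vec v\in T_{p_i}(\Gamma)}d_{\vec v}\phi(p_i)$. Choosing a strict simple subdomain $W$ with $\supp\phi\subset W$ and $\overline W\subset U$ on which $\phi$ vanishes near $\partial W$, the self-adjointness of the Laplacian (a Green-type identity, \cite{BR10}) yields
\[
\int_U\phi\,d\Delta_U f_\alpha=\int_W\phi\,d\Delta_W f_\alpha=\int_W f_\alpha\,d\Delta_W\phi=\sum_{i=1}^N c_i\,f_\alpha(p_i),
\]
and the same with $f_\alpha$ replaced by $f$. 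Since every $p_i$ lies in $U\cap\sH^1$, the hypothesis gives $\int_U\phi\,d(-\Delta_U f_\alpha)\to\int_U\phi\,d(-\Delta_U f)$ for each smooth $\phi$.

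Next I would establish a uniform mass bound along a tail of the net. Fix a compact set $X\subset U$ and, using the approximation statement, a smooth $\psi$ with $0\le\psi\le1$, $\psi\equiv1$ on $X$, and $\supp\psi$ compact in $U$; write $\Delta_U\psi=\sum_{i=1}^N c_i\delta_{p_i}$ with $p_i\in U\cap\sH^1$. Positivity of $-\Delta_U f_\alpha$ together with the previous computation gives
\[
(-\Delta_U f_\alpha)(X)\le\int_U\psi\,d(-\Delta_U f_\alpha)=-\sum_{i=1}^N c_i\,f_\alpha(p_i).
\]
For each $i$ there is $\beta_i\in A$ with $|f_\alpha(p_i)|\le|f(p_i)|+1$ whenever $\alpha\ge\beta_i$; since $A$ is directed, choose $\alpha_X\in A$ above all the $\beta_i$, so that $(-\Delta_U f_\alpha)(X)\le\sum_{i=1}^N|c_i|\bigl(|f(p_i)|+1\bigr)=:C_X$ for all $\alpha\ge\alpha_X$. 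Passing to this tail is legitimate because convergence of a net depends only on its tails; this is the one point where some care is needed, since $SH(U)$ carries only the topology of pointwise convergence and we must work with nets rather than sequences (a convergent net need not be bounded).

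Finally I would combine the two ingredients. Let $\phi\in\mathcal{C}_c(U)$, fix a compact neighbourhood $X$ of $\supp\phi$ in $U$ and the associated $\alpha_X,C_X$, and note $(-\Delta_U f)(X)<\infty$. Given $\varepsilon>0$, choose a smooth $\phi_\varepsilon$ with $\supp\phi_\varepsilon\subset X$ and $\|\phi-\phi_\varepsilon\|_\infty<\varepsilon$; then for all $\alpha\ge\alpha_X$,
\[
\begin{aligned}
\Bigl|\int_U\phi\,d(-\Delta_U f_\alpha)-\int_U\phi\,d(-\Delta_U f)\Bigr|
&\le\varepsilon\bigl(C_X+(-\Delta_U f)(X)\bigr)\\
&\qquad+\Bigl|\int_U\phi_\varepsilon\,d(-\Delta_U f_\alpha)-\int_U\phi_\varepsilon\,d(-\Delta_U f)\Bigr|,
\end{aligned}
\]
and the last term tends to $0$ by the smooth case. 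Hence the $\limsup$ over $\alpha$ of the left-hand side is at most $\varepsilon\bigl(C_X+(-\Delta_U f)(X)\bigr)$, and letting $\varepsilon\to0$ finishes the proof. The main obstacle is the approximation statement of the first paragraph — density of smooth functions with control of supports — which is where the geometry of the Berkovich line genuinely enters; everything afterwards is soft measure theory together with the bookkeeping needed to pass to tails of nets.
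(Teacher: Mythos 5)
The paper itself gives no proof of this proposition: it is quoted, with attribution, from Baker--Rumely (Theorem 8.44) as part of the background survey, so the only meaningful comparison is with the cited source. Your argument is sound and follows the standard route for that result: reduce to smooth test functions $\phi=\psi\circ r_\Gamma$ with $\psi\in CPA(\Gamma)$, for which coherence/self-adjointness gives $\int\phi\,d\Delta_U f=\sum_i c_i f(p_i)$ with all $p_i\in U\cap\sH^1$ (so pointwise convergence on $U\cap\sH^1$ suffices), and then pass to all of $\mathcal{C}_c(U)$ via a local mass bound valid on a tail of the net together with uniform approximation by smooth functions with controlled support. The one ingredient you leave unproved --- density of smooth functions in $\mathcal{C}_c(U)$ with support control --- is indeed available in the Berkovich setting (Stone--Weierstrass-type density of $CPA$-pullbacks in the continuous functions, combined with truncation against a smooth bump using $\max/\min$, which preserve smoothness since a common refining subgraph can be chosen), so this is a citable lemma rather than a gap.
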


Here we also recall, in a general framework, a useful consequence of  weak convergence.

\begin{proposition} \label{prop: weakconv} (cf. Formula (5.1.15) in \cite{AGS} for sequences of positive Radon measures in metric spaces) Let $X$ be a locally compact Hausdorff space, $f$ be a nonnegative lower semicontinuous function on $X$, and $\langle \mu_\alpha\rangle_{\alpha \in A}$ be a net of positive Radon measures  on $X$ converging weakly to a positive Radon measure $\mu$  on $X$. Then 
\[
\liminf_\alpha \int f d\mu_\alpha \geq \int fd\mu.
\] 
\end{proposition}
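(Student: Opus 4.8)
The plan is to reduce to the definition of weak convergence by approximating $f$ from below by functions in $\mathcal{C}_c(X)$. Put $\mathcal{F}_f := \{g \in \mathcal{C}_c(X) : 0 \le g \le f\}$; note that every lower semicontinuous function is Borel, so all the integrals below are defined. The first step is the pointwise identity $f(x) = \sup_{g \in \mathcal{F}_f} g(x)$ for every $x \in X$, which holds because $X$ is locally compact Hausdorff: given $x$ and a real $c$ with $0 \le c < f(x)$, the superlevel set $\{f > c\}$ is an open neighbourhood of $x$, so by local compactness and Urysohn's lemma there is $g \in \mathcal{C}_c(X)$ with $0 \le g \le 1$, $g(x) = 1$ and $\supp g \subset \{f > c\}$; then $c\,g \in \mathcal{F}_f$ and $(c\,g)(x) = c$, so the supremum is at least $c$, and letting $c \uparrow f(x)$ gives the claim. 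The family $\mathcal{F}_f$ is moreover upward directed, since $\max(g_1, g_2) \in \mathcal{F}_f$ whenever $g_1, g_2 \in \mathcal{F}_f$.

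The second step is the integral identity
\[
\int_X f\, d\nu \ =\ \sup_{g \in \mathcal{F}_f} \int_X g\, d\nu
\]
for every positive Radon measure $\nu$ on $X$ (in particular for $\mu$ and for each $\mu_\alpha$). This is the standard lower-semicontinuous companion of the Riesz representation theorem: combining the layer-cake formula $\int_X f\, d\nu = \int_0^\infty \nu(\{f>t\})\, dt$ with the inner regularity of $\nu$ on the open superlevel sets $\{f > t\}$ and Urysohn's lemma, one manufactures elements of $\mathcal{F}_f$ whose integrals approach $\int_X f\, d\nu$ from below. In the metric setting of \cite{AGS} one can instead build an increasing \emph{sequence} in $\mathcal{F}_f$ with pointwise supremum $f$ and quote the monotone convergence theorem; in the general locally compact Hausdorff case the approximating family need not be countable, which is why one argues directly from regularity. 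I expect this to be the only step requiring genuine care.

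Granting these two identities, the conclusion is immediate. Fix $g \in \mathcal{F}_f$. Since $0 \le g \le f$ we have $\int_X f\, d\mu_\alpha \ge \int_X g\, d\mu_\alpha$ for every $\alpha$, and since $g \in \mathcal{C}_c(X)$ the weak convergence $\mu_\alpha \to \mu$ gives $\int_X g\, d\mu_\alpha \to \int_X g\, d\mu$. Hence
\[
\liminf_\alpha \int_X f\, d\mu_\alpha \ \ge\ \lim_\alpha \int_X g\, d\mu_\alpha \ =\ \int_X g\, d\mu .
\]
Taking the supremum over $g \in \mathcal{F}_f$ and applying the integral identity with $\nu = \mu$ yields $\liminf_\alpha \int_X f\, d\mu_\alpha \ge \int_X f\, d\mu$, as required.
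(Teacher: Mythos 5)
Your argument is essentially the paper's: both reduce to the supremum over minorants $g \in \mathcal{C}_c(X)$ with $0 \le g \le f$, apply weak convergence to each such $g$, and pass to the supremum. The only difference is that the paper quotes the integral identity $\int f\,d\nu = \sup_{0\le g\le f,\ g\in\mathcal{C}_c(X)}\int g\,d\nu$ directly from Proposition A.3 of \cite{BR10}, whereas you sketch its standard proof via regularity and Urysohn's lemma.
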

\begin{proof} By Proposition A.3 in \cite{BR10}, 
\[
\int f d\mu =\sup \bigl \{\int g d\mu: \ g \in \mathcal{C}_c(X), \ 0 \leq g \leq f \bigr \}, 
\]
where the class $\mathcal{C}_c$ is defined as above. Then 
\begin{multline*}
\liminf_\alpha \int f d\mu_\alpha \geq \sup_{g \in \mathcal{C}_c(X), 0 \leq g \leq f} \liminf_\alpha \int g d\mu_\alpha = \\ \sup_{g \in \mathcal{C}_c(X), 0 \leq g \leq f}  \int g d\mu = \int f d\mu .
\end{multline*}
\end{proof}

In the literature on subharmonic functions in the Euclidean space various  (related) results are referred to as a  ``Hartogs lemma''. Some of them have their non-archimedean counterparts.

\begin{proposition} (Proposition 8.54  in \cite{BR10}) \label{prop: Hartogs} Let $U \subset \sP^1$ be a domain and let  $\{g_n\}$ be a sequence of functions subharmonic in $U$. Suppose that the functions $g_n$ are uniformly bounded from above in $U$. Then  one of the following holds: \\
(i) there is a subsequence $\{g_{n_k}\}$ which converges uniformly to $-\infty$ on each compact subset of $U$, or 
(ii) there is a subsequence $\{g_{n_k}\}$ and a function $G$ subharmonic in $U$ such that $g_{n_k}$ converge pointwise to $G$ in $U \cap \sH^1$ and such that for each continuous function $f:U \to \mathbb{R}$ and each compact subset $X \subset U$,
\[
\limsup_{k \to \infty} \bigl( \sup_{z \in X} (g_{n_k}(z)-f(z))\bigr) \leq \sup_{z\in X}(G(z)-f(z)).
\]
\end{proposition}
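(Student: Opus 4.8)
The plan is to follow the classical proof of the Hartogs lemma in this form, using the potential-theoretic tools recalled above. First I would normalize: subtracting the uniform upper bound, assume $g_n \le 0$; the case $U = \sP^1$ is trivial (subharmonic functions on $\sP^1$ are constant), so assume $U \subsetneq \sP^1$, fix a point $w \in \sP^1 \setminus U$, and fix (Proposition~\ref{prop: exhaustion}) an exhaustion $V_1 \subset V_2 \subset \cdots$ of $U$ by strict simple domains with $\overline{V_m} \subset V_{m+1}$. The dichotomy is then: \emph{either} $\sup_X g_n \to -\infty$ for every compact $X \subset U$, in which case alternative (i) holds for the full sequence, \emph{or} there are a compact $X_0 \subset U$, an $R_0 > 0$, and a subsequence $\{g_{n_k}\}$ with $\sup_{X_0} g_{n_k} \ge -R_0$; in the latter case, since each $g_{n_k}$ is upper semicontinuous, after perturbing type-I maximizers slightly into $\sH^1$ one finds points $y_k$ in a fixed compact $X_1 \subset U$, all of type II/III/IV, with $g_{n_k}(y_k) \ge -R_0 - 1$. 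From here I aim for alternative (ii).

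The first step is a uniform bound on Riesz masses. Fixing $m$ with $X_1 \subset V_{m+1}$, the Riesz--Green representation of $g_{n_k}$ on the simple domain $V_{m+1}$, $g_{n_k}(y) = H_{n_k}(y) - \int_{V_{m+1}} G_{V_{m+1}}(y,z)\, d(-\Delta_{V_{m+1}} g_{n_k})(z)$, has harmonic part $H_{n_k}$ with boundary values $\le 0$, hence $H_{n_k} \le 0$ by the maximum principle (Proposition~\ref{prop: maxprinciple}), and Green function $G_{V_{m+1}} \ge 0$; evaluating at $y = y_k$ gives $\int G_{V_{m+1}}(y_k, \cdot)\, d(-\Delta_{V_{m+1}} g_{n_k}) \le -g_{n_k}(y_k) \le R_0 + 1$, and since $G_{V_{m+1}}$ is lower semicontinuous and strictly positive on the compact set $X_1 \times \overline{V_m}$, hence bounded below there by some $\varepsilon_m > 0$, we obtain $(-\Delta_{V_m} g_{n_k})(V_m) \le (R_0+1)/\varepsilon_m$, uniformly in $k$ (the cases of smaller $m$ follow by restriction). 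A diagonal argument then produces a subsequence $\{g_{n_{k_j}}\}$ along which $-\Delta_{V_m} g_{n_{k_j}}$ converges weakly, for every $m$, to measures that glue to a single $\nu \in \cM^+(U)$.

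On each $V_m$ with $X_1 \subset V_m$, write the analogous decomposition $g_{n_{k_j}} = H_j^{(m)} - P_j^{(m)}$, with $H_j^{(m)}$ harmonic on $V_m$ and $\le 0$, and $P_j^{(m)}(x) = \int_{V_m} G_{V_m}(x,z)\, d(-\Delta_{V_m} g_{n_{k_j}})(z) \ge 0$. From the decomposition, $(-H_j^{(m)})(y_{k_j}) \le -g_{n_{k_j}}(y_{k_j}) \le R_0 + 1$ with $y_{k_j} \in X_1$, so Harnack's inequality (Proposition~\ref{prop: Harnackineq}) bounds $-H_j^{(m)}$ uniformly in $j$ on every compact subset of $V_m$; thus $\{H_j^{(m)}\}_j$ is a normal family of harmonic functions (the classical normal-families argument via Harnack carries over), and after a further diagonal extraction $H_j^{(m)} \to H^{(m)}$ harmonic, locally uniformly. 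For fixed $x \in V_m \cap \sH^1$ the kernel $z \mapsto G_{V_m}(x,z)$ lies in $C(\overline{V_m})$ and vanishes on $\partial V_m$, hence is a uniform limit of functions in $\mathcal{C}_c(V_m)$; combined with the mass bound and weak convergence this gives $P_j^{(m)}(x) \to P^{(m)}(x) := \int G_{V_m}(x,z)\, d\nu(z)$, a finite quantity. Therefore $g_{n_{k_j}}(x) \to G(x) := H^{(m)}(x) - P^{(m)}(x)$ for $x \in V_m \cap \sH^1$; the $G^{(m)}$ are consistent and glue to a function $G$, subharmonic on $U$ and finite on $U \cap \sH^1$, with $g_{n_{k_j}} \to G$ pointwise there.

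It remains to prove the Hartogs estimate for this subsequence and $G$. Given continuous $f : U \to \bR$ and compact $X \subset U$, set $L := \sup_X (G - f)$ (treating $L = -\infty$ separately). Since $G - f$ is upper semicontinuous, for each $\varepsilon > 0$ one covers $X$ by finitely many simple domains $W_i$ whose closures lie in $U$, on which $G - f < L + \varepsilon$ and $f$ oscillates by less than $\varepsilon$. Subharmonicity and the maximum principle give $g_{n_{k_j}} \le \max_{\partial W_i} g_{n_{k_j}}$ on $W_i$; as $\partial W_i$ is a finite set of type II/III points where $g_{n_{k_j}} \to G$, for $z \in X \cap W_i$ one obtains $\limsup_j (g_{n_{k_j}}(z) - f(z)) \le \max_{\partial W_i} G - \inf_{\overline{W_i}} f < L + 2\varepsilon$; taking the maximum over $i$ and letting $\varepsilon \to 0$ yields $\limsup_j \sup_X (g_{n_{k_j}} - f) \le \sup_X (G - f)$, completing alternative (ii). The main obstacle in this program is the diagonal extraction of a weakly convergent subsequence of Riesz masses: it rests on sequential weak-$*$ compactness of mass-bounded subsets of $\cM^+(\overline{V_m})$, equivalently on separability of $C(\overline{V_m})$ (metrizability of the relevant compacta); absent such an assumption one must argue with nets throughout and recover a sequence at the end, which is the genuinely delicate point, the potential-theoretic estimates themselves being routine.
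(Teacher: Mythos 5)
First, a point of comparison: the paper does not prove this proposition at all --- it is quoted verbatim from Baker--Rumely (\cite{BR10}, Proposition 8.54) as background, so there is no in-paper proof to measure your argument against; I can only assess your attempt on its own terms. Your outline is the classical Euclidean proof (normalize, Riesz decomposition on an exhaustion by simple domains, uniform mass bounds, Harnack control of the harmonic parts, weak-$*$ extraction of the Riesz measures, then the Hartogs estimate). Several pieces adapt well: the closing estimate, using that a simple domain has finitely many boundary points lying in $\sH^1$ where the subsequence converges pointwise, is correct and is a genuinely nice use of the non-archimedean structure; the harmonic-part extraction is also unproblematic (indeed it can be made elementary via the Poisson formula on simple domains, which reduces each harmonic part to finitely many boundary values, so only countably many real parameters are involved). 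A smaller issue is that the ``Riesz--Green representation'' you invoke, with a Green kernel of the domain and a harmonic part that is automatically $\leq 0$, is not the form available in \cite{BR10}: Theorem 8.38 there decomposes $f=h_V-u_\nu(\cdot,\zeta)$ with $\zeta$ outside $\overline{V}$, the kernel $-\log[x,y]_\zeta$ is only bounded below (not nonnegative) on $\overline{V}\times\overline{V}$, and $h_V$ need not be $\leq 0$; the mass bound survives, but with adjustments you do not spell out.

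The genuine gap is the one you flag yourself and then leave open: the diagonal extraction of a weak-$*$ convergent subsequence of the Riesz measures $-\Delta_{V_m}g_{n_k}$, and with it the pointwise convergence of the potentials at \emph{every} point of $U\cap\sH^1$. Sequential weak-$*$ compactness of mass-bounded positive measures on $\overline{V_m}$ requires separability of $C(\overline{V_m})$ (equivalently, metrizability of the relevant compacta), and the whole point of this paper's setting is that second countability of the Berkovich topology is \emph{not} assumed (it is invoked only in Section \ref{sec: Cartan}); for general $K$ the space $\sP^1$ is compact Hausdorff but not metrizable, and bounded sequences of measures need not have convergent subsequences. Saying ``argue with nets throughout and recover a sequence at the end'' is not an argument: a subnet of a sequence is not a subsequence, and conclusion (ii) demands an honest subsequence converging at every point of the uncountable, non-separable set $U\cap\sH^1$, over which one cannot diagonalize. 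There are non-archimedean tools that partially rescue your scheme --- the Hsia kernel satisfies $[x,y]_\zeta=[x,r_{[x,\zeta]}(y)]_\zeta$, so on any fixed finite subgraph the potentials see only the retracted measures, which live on compact \emph{metric} graphs where sequential compactness does hold --- but that only yields convergence on countable unions of subgraphs, and upgrading this to all of $U\cap\sH^1$ requires a further idea, which is precisely what must be taken from the proof in \cite{BR10} rather than from the classical template. As written, your proof is therefore incomplete at its central compactness step.
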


\begin{coro} (cf. Theorem 1.31  and Corollary 1.32 in \cite{LG86} in the Euclidean case; see also \cite{Klimek91}, Theorem 2.6.4)) \label{coro: littleHartogs}  Let $U \subset \sP^1$ be a domain and let  $\{g_n\}$ be a sequence of functions subharmonic in $U$ uniformly bounded from above in $U$. Suppose  (possibly after passing to a subsequence) that $g_n$ converge pointwise on $U \cap \sH^1$ to a function $G$ subharmonic in $U$. \\
(i) Suppose further that for some compact set $X \subset U$ there is a constant $C=C(X)$ such that $(\limsup_{n \to \infty}g_n(x))^* \leq C$ on $X$. Then for every $\varepsilon >0$ there is an $n_0=n_0(\varepsilon,X)$ such that for every $n \geq n_0$ and every $x \in  X$ we have $g_n(x) \leq C+\varepsilon$.
(ii) More generally, under the above asumptions, let $F$ be a function continuous on $X$ such that  $(\limsup_{n \to \infty}g_n(x))^* \leq F(x)$ on $X$. Then for every $\varepsilon >0$ there is an $n_0=n_0(\varepsilon,X)$ such that for every $n \geq n_0$ and every $x \in  X$ we have $g_n(x) \leq F(x) +\varepsilon$.
\end{coro}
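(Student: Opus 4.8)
The plan is to derive both statements from Proposition~\ref{prop: Hartogs}. Since a constant is continuous on $X$, part (i) is the special case $F\equiv C$ of part (ii), so I would only prove (ii), and I may assume $X\neq\emptyset$. Write $\phi(x):=\limsup_{n\to\infty}g_n(x)$, so that the hypothesis reads $\phi^*\le F$ on $X$, and note that $g_n\to G$ pointwise on $U\cap\sH^1$ forces $G=\phi$ there. The first step is the elementary claim that \emph{any} function $H$ subharmonic in $U$ with $H=\phi$ on $U\cap\sH^1$ satisfies $H\le F$ on all of $X$. For $z\in X\cap\sH^1$ this is immediate: $H(z)=\phi(z)\le\phi^*(z)\le F(z)$. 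For a type~I point $z\in X\cap\bP^1$, the defining property of subharmonicity (Definition~8.1 in \cite{BR10}) provides a domain $V_z\ni z$ with
\[
H(z)=\limsup_{V_z\cap\sH^1\ni y\to z}H(y)=\limsup_{V_z\cap\sH^1\ni y\to z}\phi(y)\le\limsup_{y\to z}\phi(y)=\phi^*(z)\le F(z),
\]
the middle inequality holding because the left-hand $\limsup$ is over a smaller set of points approaching $z$. In particular $\sup_X(H-F)\le 0$ for every such $H$.

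Next I would extend $F$ by Tietze's theorem (legitimate since $\sP^1$ is compact Hausdorff and $X$ is closed) to a continuous function on $\sP^1$, and let $f$ be its restriction to $U$; thus $f\colon U\to\bR$ is continuous and $f=F$ on $X$. Now suppose the conclusion of (ii) fails: there are $\varepsilon>0$, an increasing sequence $n_1<n_2<\cdots$, and points $x_j\in X$ with $g_{n_j}(x_j)>F(x_j)+\varepsilon$ for all $j$. I would apply Proposition~\ref{prop: Hartogs} to $\{g_{n_j}\}$, which is uniformly bounded above in $U$. Alternative (i) of that proposition cannot hold: a subsequence tending to $-\infty$ uniformly on the compact set $X$ would give $\sup_X g_{n_{j_k}}\to-\infty$, contradicting $g_{n_{j_k}}(x_{j_k})>F(x_{j_k})+\varepsilon\ge\min_X F+\varepsilon>-\infty$. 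Hence alternative (ii) holds: there are a subsequence $\{g_{n_{j_k}}\}$ and a function $G'$ subharmonic in $U$ with $g_{n_{j_k}}\to G'$ pointwise on $U\cap\sH^1$ — so $G'=\phi$ there, since a subsequence of $\{g_n\}$ has the same pointwise limit on $U\cap\sH^1$ — and, for our $f$ and $X$,
\[
\limsup_{k\to\infty}\Bigl(\sup_{z\in X}\bigl(g_{n_{j_k}}(z)-f(z)\bigr)\Bigr)\le\sup_{z\in X}\bigl(G'(z)-f(z)\bigr).
\]

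By the first step applied to $H=G'$, the right-hand side is $\le 0$, so $\sup_{z\in X}\bigl(g_{n_{j_k}}(z)-F(z)\bigr)=\sup_{z\in X}\bigl(g_{n_{j_k}}(z)-f(z)\bigr)<\varepsilon$ for all large $k$. Since $x_{j_k}\in X$, this contradicts $g_{n_{j_k}}(x_{j_k})-F(x_{j_k})>\varepsilon$. This proves (ii), hence (i).

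The main difficulty — and the only genuinely non-archimedean point — is the first step, namely the bound $H\le F$ at the type~I points of $X$, where the value of the subharmonic function is governed not by semicontinuity but by the boundary behavior of $\phi$ via the definition of subharmonicity. The remainder is soft: Proposition~\ref{prop: Hartogs} supplies the limsup–sup estimate along a subsequence, and the contradiction argument merely upgrades that subsequential estimate to the desired statement for all sufficiently large~$n$.
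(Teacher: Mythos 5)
Your proof is correct, but it is organized differently from the paper's. The paper proves (i) first, applying the inequality of Proposition \ref{prop: Hartogs}(ii) directly to the given sequence (the clause ``possibly after passing to a subsequence'' in the hypothesis is what licenses treating $\{g_n\}$ itself as the Hartogs subsequence), with a constant as the comparison function, and then deduces (ii) from (i) by exploiting the uniform continuity of $F$ on the compact set $X$ (in effect patching $X$ by finitely many pieces on which $F$ is nearly constant). You instead prove (ii) directly and get (i) as the special case $F\equiv C$: you extend $F$ by Tietze to a continuous function on all of $U$ (needed because Proposition \ref{prop: Hartogs}(ii) requires $f$ continuous on $U$, not just on $X$), and you run a contradiction argument so that the Hartogs inequality is only ever invoked along a subsequence, which sidesteps any reliance on the ``after passing to a subsequence'' reading. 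Your first step --- that any subharmonic $H$ agreeing with $\limsup_n g_n$ on $U\cap\sH^1$ satisfies $H\le F$ on $X$, via the type~I boundary-value clause in Definition 8.1 of \cite{BR10} --- is exactly the bridge $\sup_X G\le C$ that the paper's proof of (i) uses only implicitly (its sentence ``$(\limsup_n g_n)^*\le C$ on $X$ with $C:=\sup_X G$'' conflates the hypothesis with this needed bound), so making it explicit is a genuine clarification. In exchange, the paper's route avoids Tietze and the contradiction set-up; both arguments ultimately rest on the same key lemma, Proposition \ref{prop: Hartogs}(ii).
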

\begin{proof} For (i), let $c_n:=\sup_{z \in X}g_n(z)$ and let $\varepsilon >0$.  By  the inequality from part (ii) of Proposition \ref{prop: Hartogs},  $(\limsup_{n \to \infty}g_n(x))^* \leq C$ on $X$ with $C:=sup_{z \in X}G(z)$. Take an $n_0$ such that for every $n \geq n_0$, $c_n \leq \limsup_{n \to \infty}c_n +\varepsilon$. Then $c_n \leq C+\varepsilon$. Part (ii) follows from (i) by observing that $F$ is uniformly continuous on $X$ (taking into account the unique uniform structure on $\sP^1$ compatible with the Berkovich topology; see Appendix A.9, \cite{BR10}).
\end{proof}

Here are some results on convergence of harmonic functions in open subsets of $\sP^1$.

\begin{proposition} (Proposition 7.31, \cite{BR10}) \label{prop: harmconv} Let $U$ be an open subset of $\sP^1$. Suppose $f_1,f_2,...$ are harmonic on $U$ and converge pointwise to a function $f: U \to \mathbb{R}$. Then $f$ is harmonic in $U$ and the $f_i$ converge uniformly to $f$ on compact subsets of $U$.
\end{proposition}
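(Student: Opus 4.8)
The plan is to establish both conclusions locally. Harmonicity is a local property, and uniform convergence on a compact set $X\subset U$ follows by covering $X$ with finitely many relatively compact open sets on which uniform convergence is known; so it is enough to fix $x_0\in U$ and produce a strict simple domain $V$ with $x_0\in V$ and $\overline V\subset U$ (if the connected component of $x_0$ in $U$ is all of $\sP^1$, then $U=\sP^1$ and each $f_i$, hence $f$, is constant by the maximum principle, and there is nothing to prove; otherwise this component is a proper subdomain and such a $V$ exists by Proposition \ref{prop: exhaustion}). Writing $\partial V=\{\zeta_1,\dots,\zeta_m\}\subset\sH^1$, I will show that $f_i\to f$ uniformly on $\overline V$ and that $f|_V$ is harmonic.

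First, the uniform convergence. For any $i,j$ the function $f_i-f_j$ is harmonic on the open set $U\supset\overline V$, hence continuous on $\overline V$ and subharmonic, along with its negative, on $V$. Applying the maximum principle (Proposition \ref{prop: maxprinciple}) on the domain $V$ to $\pm(f_i-f_j)$, and using $\limsup_{V\ni z\to\zeta_k}(f_i-f_j)(z)=(f_i-f_j)(\zeta_k)$ by continuity, gives
\[
\sup_{\overline V}\,|f_i-f_j|\ \le\ \max_{1\le k\le m}\,|f_i(\zeta_k)-f_j(\zeta_k)|.
\]
The right-hand side tends to $0$ as $i,j\to\infty$, because $\partial V$ is finite and $f_i(\zeta_k)\to f(\zeta_k)$ for each $k$. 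Hence $\{f_i\}$ is uniformly Cauchy on $\overline V$, so it converges uniformly there; the limit is $f$ on $V$, and $f$ extends continuously to $\overline V$.

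Next, harmonicity. The sequence $\{f_i|_V\}$ consists of subharmonic functions and is uniformly bounded above on $V$ by the uniform convergence just proved, so Proposition \ref{prop: Hartogs} applies. Alternative (i) cannot occur, since $f_i(x)\to f(x)\in\bR$ at every point of the nonempty set $V\cap\sH^1$; thus alternative (ii) holds and some subsequence converges pointwise on $V\cap\sH^1$ to a function $G$ subharmonic in $V$. As the full sequence converges to $f$, we have $G=f$ on $V\cap\sH^1$; and for $z\in V\cap\bP^1$ the subharmonicity of $G$ gives $G(z)=\limsup_{\sH^1\ni y\to z}G(y)=\limsup_{\sH^1\ni y\to z}f(y)=f(z)$ since $f$ is continuous. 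Therefore $G=f$ on all of $V$, so $f|_V$ is subharmonic; running the same argument with $\{-f_i|_V\}$ shows $-f|_V$ is subharmonic, hence $f|_V$ is harmonic. Since $x_0$ was arbitrary, $f$ is harmonic on $U$ and $f_i\to f$ locally uniformly.

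I expect the harmonicity step to be the delicate one: one must be sure the subsequence supplied by Proposition \ref{prop: Hartogs} has limit exactly $f$ throughout $V$ — in particular at the type I points — which is precisely what the continuity of $f$ obtained in the uniform-convergence step provides. The reduction to a strict simple domain, and the degenerate case $U=\sP^1$, are routine once the maximum principle is invoked.
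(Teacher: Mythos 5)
This proposition is quoted in the paper from Proposition 7.31 of \cite{BR10} without proof, so there is no in-paper argument to compare against; judged on its own terms, your proof is correct and is essentially the standard Berkovich argument. The key observation is exactly the right one: a strict simple domain has a finite boundary contained in $\sH^1$, so pointwise convergence at the finitely many boundary points, combined with the maximum principle applied to $\pm(f_i-f_j)$ on $V$, yields that the sequence is uniformly Cauchy on $\overline V$, and local uniform convergence follows by a compactness covering argument. Two remarks. First, the harmonicity step can be shortened: once you have uniform convergence on $\overline V$, the limit $f$ is a uniform limit of subharmonic functions on $V$, hence subharmonic by Proposition 8.26(C) of \cite{BR10} (a fact the paper itself invokes in the proof of Proposition \ref{prop: uniffamilies}), and likewise for $-f$; your detour through Proposition \ref{prop: Hartogs} works, but it forces you to identify the subsequential limit $G$ with $f$ at type I points, which is exactly the extra bookkeeping the uniform-limit statement avoids. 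Second, in the degenerate case $U=\sP^1$ you appeal to ``the maximum principle'' to conclude each $f_i$ is constant, but the version stated in the paper (Proposition \ref{prop: maxprinciple}) requires a proper subdomain with nonempty boundary; constancy of functions harmonic on all of $\sP^1$ is true but needs the strong maximum principle (or the corresponding statement in \cite{BR10}) as a separate citation. Alternatively, this case can be absorbed into the main argument by noting that simple domains form a neighborhood basis of any point of $\sP^1$, so a strict simple $V\ni x_0$ with $\overline V\subset U$ exists even when $U=\sP^1$.
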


Proposition 7.34 in \cite{BR10}  gives a non-archimedean Harnack principle for a sequence $0 \leq h_1 \leq h_2 \leq ...$ of harmonic functions on a domain $\Omega \subset \sP^1$.  Theorem 4.7.2 in \cite{Wanner2016} is slightly more general, since it does not require the sequence to be non-negative.  A similar result is  Proposition 3.1.2 in \cite{ThuillierThesis}, while Proposition 3.1.3 and Corollary 3.3.10 in \cite{ThuillierThesis} are equicontinuity results for locally uniformly bounded families of harmonic functions. Here we offer an even more general variant of the Harnack principle, for a family $\mathcal{F}$ of harmonic functions on a domain $\Omega \subset \sP^1$ that is  locally uniformly bounded from below in $\Omega$ but not necessarily uniformly bounded from below in $\Omega$, countable or increasing. This is a  new result in the non-archimedean setting.

\begin{proposition} (cf. \cite{AG}, Theorem 1.5.11 in the classical case) \label{prop: harmlocunifbound} Let $\Omega \subset \sP^1$ be a domain and let $\mathcal{F}$ be a family  of harmonic functions on $\Omega$ which is locally uniformly bounded from below. Then either $\sup \mathcal{F} \equiv +\infty$ in $\Omega$ or  $\mathcal{F}$ is uniformly bounded and uniformly equicontinuous on every compact subset $E \subset \Omega$.
\end{proposition}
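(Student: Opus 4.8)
The plan is to follow the classical pattern (compare \cite{AG}, Theorem 1.5.11): reduce everything to a dichotomy by combining Harnack's inequality (Proposition \ref{prop: Harnackineq}) with the connectedness of $\Omega$, and only afterwards pass to uniform boundedness and equicontinuity on a prescribed compactum. Set $u := \sup_{f \in \mathcal{F}} f$ and split $\Omega = \Omega_\infty \cup \Omega_{\mathrm{fin}}$, where $\Omega_\infty := \{x \in \Omega : u(x) = +\infty\}$ and $\Omega_{\mathrm{fin}} := \{x \in \Omega : u(x) < +\infty\}$. The crux is to show that both pieces are open.

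Fix $x_0 \in \Omega$. Local uniform boundedness from below together with local connectedness of $\sP^1$ furnishes a subdomain $W' \subsetneq \sP^1$ with $x_0 \in W' \subset \Omega$ and a constant $m \in \bR$ such that $f \geq m$ on $W'$ for every $f \in \mathcal{F}$; by Proposition \ref{prop: exhaustion} I may then choose a subdomain $W$ with $x_0 \in W$ and $\overline{W} \subset W'$, so that $\overline{W}$ is compact. Applying Proposition \ref{prop: Harnackineq} to the domain $W'$, the point $x_0$ and the compact set $\overline{W}$ produces $C = C(x_0, \overline{W})$ with $(1/C)\,h(x_0) \leq h(y) \leq C\,h(x_0)$ for every nonnegative harmonic $h$ on $W'$ and every $y \in \overline{W}$. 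Taking $h = f - m \geq 0$ and letting $f$ range over $\mathcal{F}$: if $u(x_0) = +\infty$ the left inequality forces $u(y) = +\infty$ for all $y \in \overline{W}$, so $W \subset \Omega_\infty$; if $u(x_0) < +\infty$ the right inequality gives $u(y) \leq m + C(u(x_0) - m)$ for all $y \in \overline{W}$, so $W \subset \Omega_{\mathrm{fin}}$ and, moreover, $u$ is bounded above near $x_0$. As $\Omega$ is connected, either $\Omega_{\mathrm{fin}} = \emptyset$, i.e.\ $\sup \mathcal{F} \equiv +\infty$, or $\Omega = \Omega_{\mathrm{fin}}$, in which case $u$ is locally bounded above and, since $\mathcal{F}$ is locally bounded below by hypothesis, $\mathcal{F}$ is locally uniformly bounded on $\Omega$.

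In the second case fix a compact $E \subset \Omega$. The neighborhoods $W = W(x)$ built above (one for each $x \in E$) cover $E$, hence finitely many suffice; on each of them every $f \in \mathcal{F}$ obeys the two-sided bounds produced above, and taking the least lower bound and the greatest upper bound yields the claimed uniform boundedness of $\mathcal{F}$ on $E$. For the equicontinuity I would invoke the equicontinuity results for locally uniformly bounded families of harmonic functions on domains in $\sP^1$ (Proposition 3.1.3 and Corollary 3.3.10 in \cite{ThuillierThesis}), whose hypothesis we have just verified; equicontinuity at each point of $E$, together with the compactness of $E$ and the uniqueness of the uniform structure on $\sP^1$ compatible with the Berkovich topology (Appendix A.9 in \cite{BR10}), then upgrades to uniform equicontinuity on $E$. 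I expect this last step to be the main obstacle: Proposition \ref{prop: Harnackineq} only compares values at two points multiplicatively and gives no modulus-of-continuity control uniform over $\mathcal{F}$, so --- in contrast with the Euclidean setting, where gradient estimates close the gap --- the equicontinuity must be borrowed from the finer harmonic-measure estimates in the literature. One could instead try a Hartogs-type normal-families argument based on Propositions \ref{prop: Hartogs} and \ref{prop: harmconv}, but that route needs care because $\sP^1$ need not be metrizable or second countable in the generality considered here.
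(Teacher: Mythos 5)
Your proof is correct, and its first half is essentially the paper's: uniform boundedness on compacta comes from Harnack's inequality (Proposition \ref{prop: Harnackineq}) applied to the shifted nonnegative functions $f-m$. Two differences are worth noting. First, your openness dichotomy $\Omega=\Omega_\infty\cup\Omega_{\mathrm{fin}}$ plus connectedness is a local chaining argument that the paper sidesteps: since Proposition \ref{prop: Harnackineq} is already stated for an arbitrary compact subset $X$ of a domain $U$, the paper simply fixes one point $x_0$ with $\sup\mathcal{F}(x_0)<+\infty$, encloses $E\cup\{x_0\}$ in a domain $V$ with $\overline{V}\subset\Omega$, normalizes $\mathcal{F}$ to be positive there, and obtains the uniform bound on $E$ in one stroke; your version proves the slightly stronger intermediate fact that $\sup\mathcal{F}$ is finite everywhere, but both are the same use of Harnack. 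Second, for equicontinuity the paper does exactly what you predicted must be done --- borrow it from finer results in the literature --- but from a different source: it observes that the right-hand inequality in Proposition \ref{prop: Harnackineq} is Axiom $\mathrm{III}_2$ of Brelot's axiomatic theory and invokes the theorem of Loeb--Walsh \cite{LW65} (equivalence of Harnack's inequality and Harnack's principle), which yields equicontinuity of the family of positive harmonic functions directly from the data you already extracted, whereas you appeal to Thuillier's equicontinuity results for locally uniformly bounded families (Proposition 3.1.3 and Corollary 3.3.10 of \cite{ThuillierThesis}), upgraded to uniform equicontinuity on $E$ via compactness and the unique uniform structure on $\sP^1$ (\cite{BR10}, Appendix A.9). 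Both routes are legitimate; the Loeb--Walsh argument stays entirely within the Harnack-inequality framework, while yours rests on independently established equicontinuity theory and needs the (standard, on a compact Hausdorff space) pointwise-to-uniform step.
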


\begin{proof} If $\sup \mathcal{F} \not \equiv +\infty$, then we can fix an $x_0 \in \Omega$ such that  $\sup \mathcal{F}(x_0)< +\infty$. Let $E \subset \Omega$ be a compact set and let $V \subset \sP^1$ be a domain such that $E \cup \{x_0\} \subset V \subset \overline{V} \subset \Omega$. The family $\mathcal{F}$ is uniformly bounded from below on $\overline{V}$, so without loss of generality we can assume that all functions in $\mathcal{F}$ are positive on $V$. By Proposition \ref{prop: Harnackineq}, there is a constant $C=C(E,x_0)$ such that, for every $f \in \mathcal{F}$ and every $x \in E$, $0 < f(x) < C < +\infty$. Hence $\mathcal{F}$ is uniformly bounded on $E$. Further, the right side of the inequality in Proposition \ref{prop: Harnackineq} is the same as Axiom $\mathrm{III}_2$  of \cite{LW65}.  By the Theorem of \cite{LW65}, the family $\mathcal{F}$ of positive functions is equicontinuous in a compact neighborhood of $x_0$.  
\end{proof}

\subsection{The Green function}\label{sec: Green}

In this section, we fix the base $q >1$ of logarithms which occur in the definition of potentials. Typically this is done so that the absolute value $|\cdot|$ coincides with the modulus of the  Haar measure on the additive group of $K$. When $K=\mathbb{C}_p$,   $p$ prime, one takes $q=p$ (see Example 6.4 in \cite{BR10}).\\

\begin{definition} (\cite{BR10}, Section 6.3)   Fix $\zeta \in \sP^1$ and let $\nu$ be a positive measure on $\sP^1$. The potential of $\nu$ with respect to $\zeta$ is the function 
\[
u_\nu(x,\zeta)=\int -\log [x,y]_\zeta d\nu(y).
\]
\end{definition}

Remark (Example 8.8, \cite{BR10}): If $\nu$ is a probability measure on $\sP^1$ and $\zeta \not \in {\rm supp }\  \nu$, then the potential $u_\nu(\cdot,\zeta)$ is strongly subharmonic in $\sP^1 \setminus {\rm supp }\  \nu$, while $-u_\nu(\cdot,\zeta)$ is strongly subharmonic in $\sP^1 \setminus \{\zeta\}$.  

\begin{proposition}
(Theorem 8.38, \cite{BR10}: Riesz decomposition theorem) \label{prop: Riesz} Let $V$ be a simple subdomain of an open set $U \subset \sP^1$. Fix $\zeta \in   \sP^1 \setminus \overline{V}$. Suppose $f$ is subharmonic in $U$
 and let $\nu$ be the positive measure $\nu=-\Delta_V(f)$. Then there is a function $h_V$ which is continuous in $\overline{V}$, harmonic in $V$ and such that $f(z)=h_V(z)-u_\nu(z,\zeta)$ for all $z \in \overline{V}$.

\end{proposition}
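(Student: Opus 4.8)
The plan is to put $h_V := f + u_\nu(\cdot,\zeta)$ on $\overline V$. With this choice the asserted identity $f = h_V - u_\nu(\cdot,\zeta)$ holds on $\overline V$ by construction, so the whole content is to show that $h_V$ is harmonic in $V$ and continuous on $\overline V$ --- exactly as in the classical Riesz decomposition, where the potential of $-\Delta f$ absorbs the ``subharmonic part'' of $f$ and leaves a harmonic remainder.

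First I would record the relevant facts about $\nu$. Since $f$ is subharmonic in $U$ and $\overline V \subset U$ (as $V$ is a simple subdomain of $U$), $f$ lies in $BDV(V)$ and $\Delta_{\overline V}(f)$ is a finite signed Borel measure, so $\nu = -\Delta_V(f)$ is a finite positive Borel measure with $\supp\nu \subset V$; also $\zeta \notin \overline V \supset \supp\nu$. Applying the Example after the definition of the Laplacian, namely $\Delta_{\sP^1}(-\log[\,\cdot\,,y]_\zeta) = \delta_y - \delta_\zeta$ for $y \neq \zeta$, and integrating it against $\nu$ --- which is legitimate because the potential of a finite measure lies in $BDV(\sP^1)$ and the Laplacian of a potential is the integral of the Laplacians of its kernels --- gives $u_\nu(\cdot,\zeta) \in BDV(\sP^1)$ with $\Delta_{\sP^1}(u_\nu(\cdot,\zeta)) = \nu - \nu(\sP^1)\,\delta_\zeta$. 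Restricting to the open set $V$, where the point mass at $\zeta \notin \overline V$ contributes nothing, yields $\Delta_V(u_\nu(\cdot,\zeta)) = \nu$.

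Consequently $h_V = f + u_\nu(\cdot,\zeta)$ lies in $BDV(V)$ and, by linearity of the Laplacian, $\Delta_V(h_V) = \Delta_V(f) + \Delta_V(u_\nu(\cdot,\zeta)) = -\nu + \nu = 0$. From the vanishing of the Laplacian I would conclude that $h_V$ is harmonic in $V$ (in particular real-valued and continuous there), via the characterization of harmonic functions on a domain by $f \in BDV(V)$ together with $\Delta_V(f) = 0$ from \cite{BR10}, Chapter 7; equivalently, after normalizing $\nu$ to a probability measure (the case $\nu = 0$ being trivial, as then $f$ itself is harmonic in $V$) the Remark following the definition of the potential shows $-u_\nu(\cdot,\zeta)$ is subharmonic on $\sP^1 \setminus \{\zeta\} \supset \overline V$, so $f$ and $-u_\nu(\cdot,\zeta)$ are two subharmonic functions on $V$ with the common Laplacian $-\nu$ and their difference $h_V$ is harmonic.

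Finally I would check that $h_V$ is continuous on $\overline V$. Since $V$ is a simple domain, $\partial V = \{\zeta_1,\dots,\zeta_n\}$ is a finite set of points of type II or III, each lying outside $\supp\nu$; at such a point a subharmonic function is continuous, and the potential $u_\nu(\cdot,\zeta)$ is likewise continuous (the generalized Hsia kernel does not degenerate there, cf. Proposition 4.7), so $h_V = f + u_\nu(\cdot,\zeta)$ is continuous at each $\zeta_i$, and being harmonic it is continuous throughout $V$; hence it is continuous on all of $\overline V$. I expect the main obstacle to be precisely this interface with the boundary --- in particular verifying the continuity of $f$ and of $u_\nu(\cdot,\zeta)$ up to $\partial V$ when $\supp\nu$ may accumulate there --- together with citing the exact form of the ``$\Delta_V = 0 \Rightarrow$ harmonic'' statement used above; these are the non-formulaic steps, resting on the finer structure theory of the Berkovich line in \cite{BR10}.
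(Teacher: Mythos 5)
The first thing to note is that the paper contains no proof of this proposition: it is quoted from \cite{BR10} (Theorem 8.38) purely for later reference, so the only meaningful comparison is with the standard argument there. Your skeleton agrees with it: setting $h_V:=f+u_\nu(\cdot,\zeta)$ is the right decomposition, and your Laplacian bookkeeping is correct --- $f\in BDV(V)$ because $V$ is a simple subdomain with $\overline{V}\subset U$, $\Delta_{\sP^1}(u_\nu(\cdot,\zeta))=\nu-\nu(\sP^1)\delta_\zeta$ by integrating the paper's Example 5.19, and pushing forward under the retraction onto $\overline{V}$ (which sends $\zeta$ to a boundary point) gives $\Delta_V(u_\nu(\cdot,\zeta))=\nu$, hence $\Delta_V(h_V)=0$.

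However, the two steps you yourself flag are genuine gaps, and the bridges you propose for them fail. (a) ``$h_V\in BDV(V)$ and $\Delta_V(h_V)=0$'' does not imply harmonicity in the Berkovich setting: the Laplacian only records the behavior of $h_V$ on finite subgraphs, which lie in $\sH^1$, so it is blind to the values of $h_V$ at points of type I and IV; there $h_V$ is the sum of an upper semicontinuous and a lower semicontinuous function, so not even upper semicontinuity is free, and the variant ``two subharmonic functions with the same Laplacian differ by a harmonic function'' is essentially the Riesz decomposition itself, not an available lemma. (b) Your boundary argument asserts that a subharmonic function is continuous at a type II or III point and that $u_\nu(\cdot,\zeta)$ is continuous there; neither is true in general. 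Subharmonic functions are only upper semicontinuous in the Berkovich topology (e.g.\ $-u_\mu(\cdot,\zeta)$ for $\mu$ a sum of point masses at type I points lying in distinct residue classes is $-\infty$ on a sequence converging to the Gauss point, where it is finite), and potentials are only lower semicontinuous, guaranteed continuous only off $\supp\nu$, while $\supp\nu=\supp(-\Delta_V(f))$ may perfectly well accumulate at, or contain, points of $\partial V$. A correct completion goes instead through the structure of $h_V$ itself: since $\Delta_{\overline{V}}(h_V)$ is supported on the finite set $\partial V$, on $V\cap\sH^1$ the function $h_V$ factors through the retraction onto the finite tree spanned by $\partial V$, and this yields strong harmonicity on $V$ and continuity up to $\partial V$ simultaneously; the values at points of type I and IV are then pinned down by the $\limsup$ condition built into the definition of subharmonicity of $f$ together with the behavior of potentials along paths. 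So your choice of decomposition is the right one, but the harmonicity and boundary-continuity steps require these additional inputs rather than the pointwise continuity claims you invoke.
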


Let now $E \subset \sP^1$ be a compact subset such that $\zeta \not \in E$ and let $\nu$ is a probability measure with support contained in $E$. Following Section 6.1, \cite{BR10}, we define a few basic notions.

\begin{definition}  The energy of $\nu$ with respect to $\zeta$ is 
\[
I_\zeta(\nu)=\int_E u_\nu(x,\zeta) d\nu(x).
\]
\end{definition}

Varying $\nu$ over all probability measure with support in $E$ we further define

\begin{definition} The Robin constant of $E$ with respect to $\zeta$ is 
\[
V_\zeta(E)=\inf_\nu I_\zeta(\nu)
\]
and the (logarithmic) capacity of $E$ is 

\[
{\rm cap }_\zeta(E)=q^{-V_\zeta(E)}.
\]
\end{definition}

Remark: Unlike in the classical setting, there are finite subsets of Berkovich line with positive capacity: in fact, for every $a \in \sH^1$ and $\zeta \neq a$, ${\rm cap }_\zeta(\{a\})={\rm diam}_\zeta(a) >0$.\\

Remark: Other capacitary notions can be defined for subsets of $\sP^1$ (see Section 6.4 of \cite{BR10}). While they coincide for compact subsets (Theorem 6.24 in \cite{BR10}), they may differ for non-compact subsets (Remark 6.25 in \cite{BR10}).\\

\begin{proposition}\label{prop: countsum}
(Corollary 6.17, \cite{BR10}): Let $\{e_n\}_{n \in \mathbb{N}}$ be a countable collection of Borel subsets of $\sP^1\setminus\{\zeta\}$ such that each $e_n$ has capacity $0$. Let $e=\bigcup_{n\in \mathbb{N}}e_n$. Then $e$ has capacity $0$.
\end{proposition}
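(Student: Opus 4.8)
\textbf{Proof proposal for Proposition \ref{prop: countsum} (countable subadditivity of zero capacity sets).}

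The plan is to reduce the statement to a statement about energies of probability measures and then use a diagonal/weighted-averaging argument on measures. First I would recall that, for a Borel set $e \subset \sP^1 \setminus \{\zeta\}$, having capacity zero is equivalent to $V_\zeta(K) = +\infty$ for every compact $K \subset e$ (this is how the inner capacity of a Borel set is defined from the compact case, via Section 6.4 of \cite{BR10}; since all capacitary notions agree on compacts, Theorem 6.24, one may work with the inner capacity throughout). So it suffices to show: if $K \subset \bigcup_n e_n$ is compact and each $e_n$ has zero capacity, then $V_\zeta(K) = +\infty$, i.e. $\Capa_\zeta(K) = 0$. Suppose for contradiction that $\Capa_\zeta(K) > 0$; then there is a probability measure $\nu$ supported on $K$ with $I_\zeta(\nu) < +\infty$.

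The key step is to decompose $\nu$ according to the sets $e_n$. Set $K_n = K \cap e_n$, so $K = \bigcup_n K_n$ and hence $\nu(\bigcup_n K_n) = 1$; replacing $e_n$ by $e_n \setminus \bigcup_{m<n} e_m$ we may assume the $K_n$ are disjoint (a subset of a capacity-zero set still has capacity zero, by monotonicity of capacity, which follows directly from the infimum definition of $V_\zeta$). Then $\sum_n \nu(K_n) = 1$, so some $K_{n_0}$ has $\nu(K_{n_0}) =: c > 0$. Define the normalized restriction $\nu_{n_0} := c^{-1}\, \nu|_{K_{n_0}}$, a probability measure supported on $K_{n_0} \subset e_{n_0}$. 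The crucial inequality is that its energy is finite:
\[
I_\zeta(\nu_{n_0}) = c^{-2} \int_{K_{n_0}} \int_{K_{n_0}} -\log[x,y]_\zeta \, d\nu(x)\, d\nu(y) \le c^{-2} \int_{K} \int_{K} -\log[x,y]_\zeta \, d\nu(x)\, d\nu(y) = c^{-2} I_\zeta(\nu) < +\infty,
\]
where the inequality uses that the kernel $-\log[x,y]_\zeta$ is nonnegative (property (i) of the generalized spherical kernel, $0 \le [x,y]_\zeta \le 1$), so enlarging the domain of integration only increases the integral. Hence $V_\zeta(K_{n_0}) \le I_\zeta(\nu_{n_0}) < +\infty$, contradicting the assumption that $e_{n_0}$, and therefore its subset $K_{n_0}$, has capacity zero.

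The main obstacle — really the only subtle point — is making sure the reduction to compact sets is legitimate and that ``zero capacity'' for the (possibly non-compact, merely Borel) sets $e_n$ is being used in the right form. I would be careful to invoke the correct definition from \cite{BR10} Section 6.4: a Borel set has capacity zero precisely when its inner capacity vanishes, equivalently it carries no probability measure of finite energy (more precisely, no probability measure supported on a compact subset with finite energy); with that formulation the argument above is clean, since it only ever constructs a measure supported on a compact subset $K_{n_0}$ of $e_{n_0}$. The nonnegativity of the kernel, which replaces the somewhat more delicate estimates needed in the classical archimedean setting, is what makes the restriction-and-renormalization step work without any further truncation argument. One should also note the harmless point that a probability measure of finite energy cannot charge $\{\zeta\}$ (its potential would be $+\infty$ there) and indeed is supported in $\sP^1 \setminus \{\zeta\}$, consistent with the hypothesis $e_n \subset \sP^1 \setminus \{\zeta\}$.
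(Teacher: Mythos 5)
First, a remark on the comparison: the paper offers no proof of this proposition at all — it is quoted verbatim as Corollary 6.17 of \cite{BR10}, one of the background facts stated ``for convenient reference'' — so the only benchmark is the Baker--Rumely argument, and your strategy (reduce to a compact $K\subset e$ of positive capacity via inner capacity, pick a finite-energy probability measure $\nu$ on $K$, locate an $e_{n_0}$ charged by $\nu$, restrict and renormalize) is essentially that argument. However, as written it has a genuine gap at the concluding step. After disjointifying, $K_{n_0}=K\cap e_{n_0}$ is only a Borel subset of $K$, not a compact set, so your closing claim that the argument ``only ever constructs a measure supported on a compact subset $K_{n_0}$ of $e_{n_0}$'' is false: the closed support of $\nu|_{K_{n_0}}$ need not be contained in $e_{n_0}$ at all. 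With the definition you yourself adopt (capacity zero of a Borel set means $V_\zeta(F)=+\infty$ for every compact $F$ contained in it), the finiteness of $I_\zeta(\nu_{n_0})$ contradicts nothing yet; you still have to exhibit a \emph{compact} subset of $e_{n_0}$ of positive capacity. The missing ingredient is inner regularity of the (Radon) measure $\nu$: choose a compact $F\subset K_{n_0}$ with $\nu(F)>0$ and run the restriction/normalization on $F$ rather than on $K_{n_0}$; then $F\subset e_{n_0}$ is compact and carries a probability measure of finite energy, which is the desired contradiction. If instead you prefer to take as known that a Borel set has capacity zero if and only if it is null for every probability measure of finite energy, note that this equivalence is precisely Lemma 6.16 of \cite{BR10}, i.e.\ the nontrivial content underlying the corollary, so it cannot be invoked as a ``definition'' without circularity.

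A second, minor and repairable, point concerns the inequality between the two double integrals. Global nonnegativity of $-\log[x,y]_\zeta$ is a property of the kernel relative to the Gauss point; the generalized kernel $[x,y]_\zeta$ is unbounded near $\zeta$ (e.g.\ $[x,y]_\infty=|x-y|$ on type~I points), so $-\log[x,y]_\zeta$ can be negative, and the statement $0\le[x,y]_\zeta\le 1$ should not be relied on for arbitrary $\zeta$. This does not hurt you here: since $K$ is compact and $\zeta\notin K$, the function $x\mapsto[x,\zeta]_g$ has a positive minimum $\delta$ on $K$, hence $[x,y]_\zeta\le\delta^{-2}$ and $-\log[x,y]_\zeta\ge 2\log\delta$ on $K\times K$; adding the constant $-2\log\delta$ to the kernel restores the monotonicity-in-the-domain argument and still yields $I_\zeta(\nu_{n_0})<+\infty$. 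But the justification should be this compactness argument, not a global sign of the kernel. (Incidentally, the disjointification is unnecessary: countable subadditivity of $\nu$ already gives some $n_0$ with $\nu(K\cap e_{n_0})>0$.)
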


If $E$ is compact and ${\rm cap }_\zeta(E)>0$, there is a (unique) probability measure $\mu_\zeta$ supported on $E$ for which $V_\zeta(E)=I_{\zeta}(\mu_\zeta)$ (Propositions  6.6 and 7.21 in \cite{BR10}). 

\begin{definition} 
The measure $\mu_\zeta$ is called the equilibrium measure of $E$ with respect to $\zeta$. If $\zeta \not \in E$,   the Green function of $E$ with respect to $\zeta$ is 
\[
G(z,\zeta,E):=V_\zeta(E)-u_{\mu_\zeta}(z,\zeta).
\]
\end{definition}

We recall here several known properties of  the Green function of a compact set $E \subset \sP^1\setminus\{\zeta\}$ with positive capacity. Let $D_\zeta$ denote the connected component of $\sP^1\setminus E$ containing $\zeta$.

\begin{proposition} (Proposition 7.37, \cite{BR10}) \label{prop: Green} (i) $G(z,\zeta,E)$ is finite for every $z \in  \sP^1 \setminus\{\zeta\}$.\\
(ii) $G(z,\zeta,E) \geq 0$ for every $z \in  \sP^1$ and $G(z,\zeta,E) >  0$ 
for every $z \in D_\zeta$.\\
(iii) $G(z,\zeta,E) = 0$ on $\sP^1 \setminus D_\zeta$ except on a (possibly empty) set $e \subset \partial D_\zeta$ of capacity zero.\\
(iv) $G(z,\zeta,E)$ is continuous on $\sP^1 \setminus e$.\\
(v) $G(z,\zeta,E)$ is subharmonic on $\sP^1\setminus\{\zeta\}$ and (strongly) harmonic on $D_\zeta \setminus\{\zeta\}$. For every $a \neq \zeta$, $G(z,\zeta,E)-\log [z,a]_\zeta$ extends to a function harmonic in a neighborhood of $\zeta$.\\
(vi) $G(z,\zeta,E)=G(z,\zeta, \sP^1\setminus D_\zeta)= G(z,\zeta, \partial D_\zeta)$.\\
(vii) If $E_1 \subset E_2$ are two sets of positive capacity, then $G(z,\zeta,E_1) \geq G(z,\zeta,E_2)$.
\end{proposition}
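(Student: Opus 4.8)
The plan is to derive all seven statements from the single identity $G=V-u$, where I abbreviate $G:=G(\cdot,\zeta,E)$, $V:=V_\zeta(E)$, $\mu:=\mu_\zeta$, $u:=u_\mu(\cdot,\zeta)$, using the Laplacian of a potential, the maximum principle, and the non-archimedean Frostman theorem for the equilibrium measure. Integrating the identity $\Delta_{\sP^1}(-\log[\cdot,y]_\zeta)=\delta_y-\delta_\zeta$ (Example 5.19 of \cite{BR10}) against the probability measure $\mu$ gives $\Delta_{\sP^1}u=\mu-\delta_\zeta$, hence $\Delta_{\sP^1}G=\delta_\zeta-\mu$. Since $\mu$ is supported in $E$ and $\zeta\notin E$, this measure is $\le 0$ on $\sP^1\setminus\{\zeta\}$, so $G$ is subharmonic there (equivalently: $-u$ is strongly subharmonic on $\sP^1\setminus\{\zeta\}$ by Example 8.8 of \cite{BR10}), and it vanishes on $D_\zeta\setminus\{\zeta\}\subset\sP^1\setminus(E\cup\{\zeta\})$, where that same example makes $u$ and $-u$ both strongly subharmonic, so $G$ is strongly harmonic on $D_\zeta\setminus\{\zeta\}$; for $a\ne\zeta$ one computes $\Delta_{\sP^1}(G-\log[\cdot,a]_\zeta)=\delta_a-\mu$, which carries no mass on a small neighbourhood of $\zeta$, so $G-\log[\cdot,a]_\zeta$ extends harmonically across $\zeta$. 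This gives (v). For (i): $u\ge 0$ because $-\log[\cdot,\cdot]_\zeta\ge 0$, so $G\le V$; and for fixed $z\ne\zeta$ the map $y\mapsto[z,y]_\zeta=[z,y]_g/([z,\zeta]_g[y,\zeta]_g)$ is bounded above on the compact set $E$ (numerator $\le 1$, $[z,\zeta]_g>0$ a fixed constant, and $y\mapsto[y,\zeta]_g$ continuous and strictly positive on $E$), whence $u(z)>-\infty$ and $G(z)<+\infty$.

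The remaining items rely on the non-archimedean Frostman theorem for $\mu$ (Section 6 of \cite{BR10}): $u\le V$ on all of $\sP^1$, with equality on $E$ outside a Borel set $e$ of capacity zero, and $e\subset\partial D_\zeta$. This gives $G\ge 0$ everywhere — completing (i) and the first half of (ii) — and $G=0$ on $E\setminus e$. For a connected component $W$ of $\sP^1\setminus E$ other than $D_\zeta$, $\overline W$ is a compact subset of $\sP^1\setminus\{\zeta\}$ with $\partial W\subset E$, $G$ is harmonic and $\ge 0$ on $W$, and $\limsup_{W\ni z\to y}G(z)=0$ for $y\in\partial W\setminus e$; since capacity-zero sets are negligible for bounded subharmonic functions, Proposition~\ref{prop: maxprinciple} forces $G\equiv 0$ on $W$. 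Hence $G=0$ on $(\sP^1\setminus D_\zeta)\setminus e$, i.e. (iii). Strict positivity on $D_\zeta$ (the rest of (ii)) follows because $G$ is a nonnegative harmonic function on $D_\zeta\setminus\{\zeta\}$ that is not identically $0$ — otherwise $\Delta_{\sP^1}G=\delta_\zeta-\mu$ could not carry its atom at $\zeta\in D_\zeta$ — so by the minimum principle $G>0$ on $D_\zeta\setminus\{\zeta\}$, while $G(\zeta)=0$ would force every directional derivative $d_{\vec v}G(\zeta)$ nonnegative, contradicting $\Delta_{\sP^1}G(\{\zeta\})=1$. For (iv), $G$ is continuous on $D_\zeta$ (harmonic off $\zeta$, and near $\zeta$ of the form (harmonic)$+\log[\cdot,a]_\zeta$) and $\equiv 0$ on the other components of $\sP^1\setminus E$, and it is continuous at each $y_0\in E\setminus e$ because $y_0$ is a regular boundary point of $D_\zeta$ (the irregular ones lying in $e$) and the restriction of the equilibrium potential to $E$ has its discontinuity set, of capacity zero, inside $e$.

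For (vi) I would first show $\supp\mu\subset\partial D_\zeta$: by (iii), $G\equiv 0$ on the interior of $\sP^1\setminus D_\zeta$ (again using that a capacity-zero set is negligible for the nonnegative subharmonic $G$), so there $0=\Delta_{\sP^1}G=\delta_\zeta-\mu$, forcing $\mu$ to vanish on that open set. As $\partial D_\zeta\subset\sP^1\setminus D_\zeta\subset E$, monotonicity of the Robin constant gives $V\le V_\zeta(\sP^1\setminus D_\zeta)\le V_\zeta(\partial D_\zeta)$, while $\mu$ — a probability measure supported on $\partial D_\zeta$ of energy $I_\zeta(\mu)=V$ — is a competitor in all three equilibrium problems; by uniqueness of the equilibrium measure, $\mu$ is the equilibrium measure of $E$, of $\sP^1\setminus D_\zeta$ and of $\partial D_\zeta$, so the three Green functions all equal $V-u=G$. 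For (vii), with $E_1\subset E_2$, equilibrium measures $\mu_1,\mu_2$ and Robin constants $V_1\ge V_2$: Frostman gives $u_{\mu_1}=V_1$ quasi-everywhere on $\supp\mu_1\subset E_2$ and $u_{\mu_2}=V_2$ quasi-everywhere on $E_2$, so $u_{\mu_1}\le u_{\mu_2}+(V_1-V_2)$ quasi-everywhere on $\supp\mu_1$, hence $\mu_1$-almost everywhere; the non-archimedean domination principle then propagates this inequality to all of $\sP^1$, which is exactly $G(\cdot,\zeta,E_1)\ge G(\cdot,\zeta,E_2)$. (Alternatively one applies Proposition~\ref{prop: maxprinciple} to the subharmonic function $G(\cdot,\zeta,E_2)-G(\cdot,\zeta,E_1)$ on the $\zeta$-component $D$ of $\sP^1\setminus E_1$: its logarithmic poles at $\zeta$ cancel, and $\limsup_{D\ni z\to y}\bigl(G(z,\zeta,E_2)-G(z,\zeta,E_1)\bigr)\le 0$ for quasi-every $y\in\partial D$.)

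I expect the genuine content to lie not in this bookkeeping but in the two potential-theoretic inputs it rests on: the non-archimedean Frostman theorem (the equilibrium potential equals the Robin constant quasi-everywhere on $E$, with $e\subset\partial D_\zeta$), and the negligibility of capacity-zero sets — their removability for bounded subharmonic functions, the attendant extended maximum principle, and the continuity principle for potentials used in (iv). These are precisely the places where the metric notion of logarithmic capacity, rather than mere topology, is indispensable; once they are granted, everything else follows from $\Delta_{\sP^1}G=\delta_\zeta-\mu$ and the maximum principle.
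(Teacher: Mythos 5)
A point of orientation first: the paper does not prove this proposition at all --- it is quoted, with attribution, as Proposition 7.37 of \cite{BR10}, one of the background results collected in Section \ref{sec: foundations} for later reference. So there is no proof in the paper to compare yours against; the relevant benchmark is Baker--Rumely's own argument, and your outline follows essentially the same route they do (Frostman's theorem for the equilibrium potential, the maximum principle strengthened to allow capacity-zero exceptional boundary sets, and the identity $\Delta_{\sP^1}G=\delta_\zeta-\mu$), so as a reconstruction of the cited result it is structurally reasonable.

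That said, one step as written is wrong and leaves a real gap: in (vi) you assert $\partial D_\zeta\subset\sP^1\setminus D_\zeta\subset E$, but the second inclusion is backwards --- $\sP^1\setminus D_\zeta$ contains $E$ together with all the components of $\sP^1\setminus E$ other than $D_\zeta$, so the true inclusions are $\partial D_\zeta\subset E\subset\sP^1\setminus D_\zeta$. With these, monotonicity of the Robin constant gives $V_\zeta(\partial D_\zeta)\ge V_\zeta(E)\ge V_\zeta(\sP^1\setminus D_\zeta)$, and the fact that $\mu$ is supported on $\partial D_\zeta$ with $I_\zeta(\mu)=V_\zeta(E)$ yields only $V_\zeta(\partial D_\zeta)=V_\zeta(E)$ together with the upper bound $V_\zeta(\sP^1\setminus D_\zeta)\le V_\zeta(E)$; nothing in your chain bounds $V_\zeta(\sP^1\setminus D_\zeta)$ from below, so the squeeze does not close and you cannot yet invoke uniqueness to identify the three equilibrium measures. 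The missing ingredient is that the equilibrium measure $\nu$ of $\sP^1\setminus D_\zeta$ is itself supported on $\partial D_\zeta\subset E$ (provable by exactly the support argument you used for $\mu$, applied to the compact set $\sP^1\setminus D_\zeta$, whose complement is the connected set $D_\zeta$); then $\nu$ is a competitor for the equilibrium problem of $E$, whence $V_\zeta(E)\le I_\zeta(\nu)=V_\zeta(\sP^1\setminus D_\zeta)$, and the loop closes. Two smaller cautions: the bound $[x,y]_\zeta\le 1$ (hence your parenthetical claim $u\ge 0$) is not valid for the generalized kernel with arbitrary $\zeta$ --- harmless here, since you never actually use it --- and your continuity argument in (iv) at points of $E\setminus e$ should rest on the continuity clause of Frostman's theorem (continuity of $u_\mu$ at the points of $E$ where $u_\mu=V_\zeta(E)$) rather than on the unproved assertion that the irregular boundary points of $D_\zeta$ lie in $e$.
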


\section{Brelot-Cartan principle}\label{sec: Cartan}

In this section we will  work under an additional topological assumption,  namely that of the second countability of the Berkovich topology. This is restrictive but still reasonable. On one hand, when there exists a countable base of open sets for the Berkovich topology on $\sP^1$,    the underlying valued field is  necessarily separable as a topological space. The precise conditions characterizing such fields are not known but, as observed in \cite{MO12}, separability may fail even if the residue field and the value group are both countable. On the other hand,  for the fields $\mathbb{C}_p$, $p$ prime (which are separable; see discussion before Corollary 1.20 in \cite{BR10}), such a countable base exist.    For some results related to  countability of the  topologies on $\sP^1$ see \cite{Favre11} and \cite{HLP14}.
The second countability of the Berkovich topology allows us to apply  the following lemma: 
\\
 
\begin{lemma} (Choquet topological lemma; for this formulation and its proof see  \cite{Doob01}, A.VIII.3) \label{lemma: Choquet} Let $\{u_\iota\}_{\iota \in I}$ be a family of functions from a second countable Hausdorff space to $[-\infty,+\infty]$. For a $J \subset I$, define $u^J=\sup_{\iota \in J}u$. Then there is a countable subset $J \subset I$ such that $(u^J)^*=(u^I)^*$, where $v^*(x)=\limsup_{y \to x}v(y)$.
\end{lemma}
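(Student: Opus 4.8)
The plan is to extract the countable subfamily directly from a countable base, using it to ``witness'' the relevant suprema at countably many levels simultaneously. Fix a countable base $\{B_n\}_{n\in\bN}$ for the topology of the ambient space $X$, and write $u:=u^I$. Since $\{B_n\}$ is a base, the basic sets containing a given point $x$ are cofinal among all open neighbourhoods of $x$, so for every $x\in X$ the upper semicontinuous regularization may be computed along basic sets:
\[
(u^I)^*(x) \;=\; \inf_{\{n\,:\,x\in B_n\}}\ \sup_{y\in B_n} u(y).
\]
For any $J\subset I$ one has $u^J\le u^I$, hence $(u^J)^*\le (u^I)^*$; so only the reverse inequality needs proof.

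For each $n$ set $s_n:=\sup_{y\in B_n,\ \iota\in I} u_\iota(y)\in[-\infty,+\infty]$. If $s_n=-\infty$, nothing need be recorded for $B_n$. Otherwise, for each $k\in\bN$ use the definition of $s_n$ as a supremum to choose a point $y_{n,k}\in B_n$ and an index $\iota_{n,k}\in I$ with
\[
u_{\iota_{n,k}}(y_{n,k}) \;>\; \min\{s_n,\,k\}-\tfrac1k .
\]
Let $J:=\{\iota_{n,k}:n,k\in\bN\}$, a countable subset of $I$.

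It remains to verify $(u^J)^*(x)\ge (u^I)^*(x)$ for every $x\in X$. This is trivial when $(u^I)^*(x)=-\infty$, so fix a real number $t<(u^I)^*(x)$. For every $n$ with $x\in B_n$ the displayed formula for the regularization gives $s_n\ge (u^I)^*(x)>t$; choosing $k$ large enough that $\min\{s_n,k\}-\tfrac1k>t$, we obtain $y_{n,k}\in B_n$ with $u^J(y_{n,k})\ge u_{\iota_{n,k}}(y_{n,k})>t$, so $\sup_{y\in B_n} u^J(y)>t$. Taking the infimum over all basic neighbourhoods $B_n$ of $x$ yields $(u^J)^*(x)\ge t$, and letting $t\uparrow (u^I)^*(x)$ completes the argument.

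The proof is elementary once the countable base is available, and the only points requiring a little care are the uniform treatment of the values $\pm\infty$ — handled by the truncation $\min\{s_n,k\}$ — and the observation that the upper semicontinuous regularization of any function can be evaluated using only basic open sets. This is precisely the role of second countability: a single countable family of open sets simultaneously controls the regularization at every point, which is what makes the diagonal choice of the indices $\iota_{n,k}$ effective globally rather than only pointwise.
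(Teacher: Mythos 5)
Your proof is correct and is essentially the standard argument that the paper delegates to \cite{Doob01}, A.VIII.3 rather than proving itself: fix a countable base, for each basic set record countably many indices witnessing the supremum of the full envelope there, and use that the regularization is computed by infimizing suprema over basic neighbourhoods. The only convention worth flagging (harmless here) is that you read $v^*(x)=\limsup_{y\to x}v(y)$ as $\inf_{\{n:\,x\in B_n\}}\sup_{B_n}v$, i.e.\ the non-deleted limsup, which is indeed the upper semicontinuous regularization intended in the paper.
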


Using  the Choquet topological lemma we can prove the Brelot-Cartan principle. It was first proved in \cite{Brelot38} (in connection with Dirichlet regularity) and \cite{Cartan45} (in a more general context).  Our formulation and the general strategy of proof  follows the classical version as presented in \cite{AG}, Theorem 5.7.1 (ii) and (iii).

\begin{theorem} \label{prop: Cartanprinciple}  Let $\Omega \subsetneq \sP^1$ be a domain and  $\{u_\iota\}_{\iota \in I}$ be a family of functions subharmonic on $\Omega$ which is locally uniformly bounded from above. Let $u=\sup_{\iota \in I}u_\iota$. Then the set $\{x \in \Omega: u(x) < u^*(x)\}$ has capacity zero.
\end{theorem}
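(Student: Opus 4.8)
\emph{Proof proposal.} The plan is to imitate the classical argument for the Brelot--Cartan principle (\cite{AG}, Theorem 5.7.1), replacing each classical ingredient by its Berkovich analogue from Section~\ref{sec: foundations}.

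First I would reduce to a single increasing sequence. By Lemma~\ref{lemma: Choquet} --- the one place the second countability hypothesis is used --- there is a countable $J\subseteq I$ with $(\sup_{\iota\in J}u_\iota)^{*}=u^{*}$; since $\sup_{\iota\in J}u_\iota\le u$, the exceptional set can only grow under this replacement, so we may assume $I$ countable, and then, replacing the $n$-th function by $\max(u_1,\dots,u_n)$ (a finite maximum of subharmonic functions is subharmonic), we may assume $u_1\le u_2\le\cdots$ with $u=\sup_n u_n=\lim_n u_n$. By Proposition~\ref{prop: netsbounded}, $u^{*}$ is then subharmonic on $\Omega$ and $u=u^{*}$ on $\Omega\cap\sH^1$; hence the exceptional set $Z:=\{x\in\Omega:u(x)<u^{*}(x)\}$ lies in $\Omega\cap\bP^1$, and, $u$ being lower semicontinuous (an increasing limit of upper semicontinuous functions) while $u^{*}$ is upper semicontinuous, $u^{*}-u$ is upper semicontinuous, so $Z=\bigcup_{k\ge 1}\{u^{*}-u\ge 1/k\}$ is a countable union of relatively closed sets.

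Next I would localize. Fix $\zeta\in\sP^1\setminus\Omega$ (nonempty since $\Omega\subsetneq\sP^1$); ``capacity'' below means $\mathrm{cap}_\zeta$. By Proposition~\ref{prop: exhaustion} write $\Omega=\bigcup_m V_m$ with $V_m$ strict simple domains, $\overline{V_m}\subset V_{m+1}\subset\Omega$. By Proposition~\ref{prop: countsum} it suffices to show that each compact set $A:=\{x\in\overline{V_m}:u^{*}(x)-u(x)\ge\varepsilon\}$, which is contained in $\overline{V_m}\cap\bP^1$ and omits $\zeta$, has capacity zero. Suppose not. Let $\mu$ be the equilibrium measure of $A$ with respect to $\zeta$, $R:=V_\zeta(A)<\infty$, and $P:=u_\mu(\cdot,\zeta)$; recall $0\le P\le R$ on $\sP^1$ and $\int P\,d\mu=R$, that $\mu$ charges no polar set, and that $P=R$ on $A$ off a polar set (Proposition~\ref{prop: Green}(iii)); after passing to a compact subset of $A$ of positive capacity along which $P$ is continuous, assume $P$ continuous on $\sP^1$. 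Choose a strict simple domain $W$ with $\overline{V_m}\subset W\subset\overline W\subset\Omega$, $\zeta\notin\overline W$, and apply the Riesz decomposition (Proposition~\ref{prop: Riesz}) on $W$: for each $n$, $u_n=h_n-u_{\nu_n}(\cdot,\zeta)$ and $u^{*}=h-u_\nu(\cdot,\zeta)$ on $\overline W$, with $\nu_n:=-\Delta_W(u_n)\ge 0$, $\nu:=-\Delta_W(u^{*})$, and $h_n,h$ continuous on $\overline W$ and harmonic on $W$. Since $u_n\to u^{*}$ in $SH(W)$ (pointwise on $W\cap\sH^1$), Proposition~\ref{prop: contLaplacian} gives $\nu_n\to\nu$ weakly.

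Granting that the masses $\nu_n(W)$ stay uniformly bounded, $h_n=u_n+u_{\nu_n}(\cdot,\zeta)$ is bounded above on $\partial W$, hence on $W$ by Proposition~\ref{prop: maxprinciple}, and it is bounded below at a point of $W\cap\sH^1$ (where $u_1>-\infty$); Proposition~\ref{prop: harmlocunifbound} together with Arzel\`a--Ascoli for the Berkovich uniformity then shows that along every subsequence a further subsequence $(h_{n_k})$ converges locally uniformly on $W$ to a harmonic $h'$ (Proposition~\ref{prop: harmconv}). Along it, $u_{\nu_{n_k}}(\cdot,\zeta)=h_{n_k}-u_{n_k}\to h'-u$ pointwise, and Proposition~\ref{prop: weakconv} applied to the nonnegative lower semicontinuous function $y\mapsto-\log[x,y]_\zeta$ yields $h'-u\ge u_\nu(\cdot,\zeta)=h-u^{*}$, i.e.\ $h'\ge h$ on $W\cap\sH^1$ and hence on $W$. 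Now integrate against $\mu$. On $A\supseteq\supp\mu$ we have $u^{*}-u_n\ge u^{*}-u\ge\varepsilon$, so $\int u^{*}d\mu-\int u_n\,d\mu\ge\varepsilon$ for all $n$; by monotone convergence ($u_1\in L^1(\mu)$ since $\int u_{\nu_1}(\cdot,\zeta)\,d\mu=\int P\,d\nu_1\le R\,\nu_1(W)<\infty$ and $h_1$ is bounded on $\overline W$) the left side tends to $\int u^{*}d\mu-\int u\,d\mu\ge\varepsilon$. On the other hand, using the Riesz decompositions and Tonelli ($\int u_{\nu_n}(\cdot,\zeta)\,d\mu=\int u_\mu(\cdot,\zeta)\,d\nu_n=\int P\,d\nu_n$),
\[
\int u^{*}d\mu-\int u_n\,d\mu=\int(h-h_n)\,d\mu+\int P\,d\nu_n-\int P\,d\nu .
\]
As $n\to\infty$ the last two terms cancel in the limit (continuity and boundedness of $P$, plus tightness of $\{\nu_n\}$ near $\partial W$), while $\limsup_n\int(h-h_n)\,d\mu\le 0$ because every subsequential limit $h'$ satisfies $h'\ge h$ on $\supp\mu$; hence $\int u^{*}d\mu-\int u\,d\mu\le 0$, a contradiction. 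Thus $A$ has capacity zero, which finishes the proof.

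I expect the crux, and the place where the Berkovich setting demands real care, to be the two parenthetical facts about the Riesz measures $\nu_n$: that their total masses $\nu_n(W)$ remain bounded (so that Proposition~\ref{prop: harmlocunifbound} can be applied to $\{h_n\}$), which I would derive from the uniform upper bound of the $u_n$ on $\overline W$ together with the convexity of subharmonic functions along the segments of the Berkovich tree; and that no mass of $\nu_n$ escapes to $\partial W$, so that $\int P\,d\nu_n\to\int P\,d\nu$, which I would arrange by working on a slightly larger auxiliary domain whose (type~II) boundary points are not atoms of the weak limit $\nu$ --- possible since $\nu$ has at most countably many atoms. The remaining auxiliary reductions (continuity of $P$ after restricting to a regular compact subset of $A$, and the fact that $\mu$ does not charge polar sets) are standard and should be stated explicitly; everything else is a transcription of the classical argument via Section~\ref{sec: foundations}.
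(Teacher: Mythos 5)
Your overall architecture mirrors the paper's proof: Choquet's lemma to pass to a countable family, increasing maxima, Riesz decomposition on a simple domain, weak convergence of the Riesz measures via Proposition~\ref{prop: contLaplacian}, a Harnack-type normal-families argument for the harmonic parts, and a Fubini/equilibrium-measure contradiction for a hypothetical compact subset of positive capacity. Two steps, however, are genuinely flawed as written. First, the reduction: you claim $u$ is lower semicontinuous ``as an increasing limit of upper semicontinuous functions''. That is false in general --- a supremum of usc functions need not be lsc (only suprema of lsc functions are automatically lsc) --- so the sets $\{u^{*}-u\ge 1/k\}$ need not be relatively closed, your $A$ need not be compact, and its equilibrium measure need not exist. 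The repair is the one the paper uses: it suffices to show that an arbitrary compact $F\subset\{u<u^{*}\}$ of positive capacity leads to a contradiction, and in the integration step one replaces the uniform bound $u^{*}-u\ge\varepsilon$ on $\supp\mu$ by the strict inequality $u<u^{*}$ $\mu$-a.e.\ together with $\mu$-integrability of $u$ (via $u\ge u_1$), which still yields $\int u^{*}\,d\mu-\int u\,d\mu>0$.

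Second, the step you yourself single out as the crux --- uniform boundedness of the masses $\nu_n(W)$ --- cannot be derived from ``the uniform upper bound of the $u_n$ together with convexity along segments'': an upper bound on subharmonic functions gives no control whatsoever on their Riesz masses. For instance, with $a\in\bP^1\cap W$ and $C:=\sup_{z\in\overline W}[z,a]_\zeta$, the functions $n\bigl(\log[z,a]_\zeta-\log C\bigr)$ are subharmonic, uniformly bounded above by $0$ on $\overline W$, yet carry Riesz mass $n$ at $a$. What rescues your situation is the monotonicity $u_n\ge u_1$: for example, take a slightly larger simple domain $W'$ with $\overline W\subset W'\subset\overline{W'}\subset\Omega$, apply Proposition~\ref{prop: contLaplacian} on $W'$, and test against $f\in\mathcal{C}_c(W')$ with $0\le f\le 1$ and $f\equiv 1$ on $\overline W$, using the compatibility of $\Delta_W$ with $\Delta_{W'}$; this bounds $\nu_n(W)$ uniformly. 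Note also that the paper organizes the endgame so as not to need your additional tightness requirement near $\partial W$: it bounds the harmonic parts from \emph{below} via $h_n\ge u_1+u_{\nu_n}(\cdot,\zeta)$ and Proposition~\ref{prop: weakconv}, applies Proposition~\ref{prop: harmlocunifbound} directly, and concludes with a symmetric Fubini chain (potential of the equilibrium measure integrated against the Riesz measures), combined with the portmanteau theorem and Fatou's lemma, which only requires a one-sided inequality rather than full convergence of $\int P\,d\nu_n$. Your no-escape-of-mass fix (choosing the auxiliary domain so its finitely many type~II boundary points are not atoms of the limit measure) is plausible, but it is an extra burden the paper's arrangement avoids.
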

 
\begin{proof}  By Proposition \ref{prop: exhaustion}, every domain different from $\sP^1$ can be exhausted by a sequence $V_1 \subset V_2 \subset...$ of (strict) simple domains such that $\overline{V_n} \subset V_{n+1} \subset \Omega$ for each $n$. Since $u(x) = u^*(x)$  in $\Omega \cap \sH^1$, it is enough to prove that for any simple domain $V \subset \Omega$ any compact subset of   $\{x \in V: u(x) < u^*(x)\}$  has capacity zero. So let us fix a point $\zeta \not \in \Omega$ and take a simple domain $V$ such than $\overline{V} \subset \Omega$. By Lemma \ref{lemma: Choquet} there is a sequence $(u_n) \subset \{u_\iota\}_{\iota \in I}$ such that $u^*=v^*$, where $v=\sup_{n \in \mathbb{N}}u_n$. By Proposition \ref{prop: netsbounded}, $v^*$ is subharmonic.  Defining $v_n=\max\{u_1,...,u_n\}$ we get an increasing sequence $(v_n)$ of subharmonic functions with limit $v$ in $\Omega$. By Proposition \ref{prop: contLaplacian}, the positive measures $\mu_n:=-\Delta_V(v_n)$  converge  weakly  to the positive bounded measure $\mu:=-\Delta_V(v^*)$. \\
Consider the potentials $p_{\mu_n}(z,\zeta)=\int \! -\log \delta(z,w)_\zeta d\mu_n(w)$. Since the function $ -\log \delta(z,w)_\zeta$ is lower semicontinuous and bounded below on $\overline{V}$, we have by Proposition \ref{prop: weakconv} that $\liminf_{n \to \infty}p_{\mu_n} \geq p_\mu$ on $V$. \\
ByProposition \ref{prop: Riesz}, for every $n$ there exists a  function $h_n$ continuous on  $\overline{V}$ and harmonic on $V$ such that $v_n=h_n - p_{\mu_n}$. The  functions $h_n, n=1,2,...$ are locally uniformly bounded  below on $V$. Indeed, $h_n \geq v_1+ p_{\mu_n}$ for every $n$.  If $C$ is a compact subset of $V$  and $x \in C$, then there is an $N >0$ such that for every $n \geq N$, $h_n(x) \geq v_1(x) + \liminf_{n \to \infty} p_{\mu_n} -\frac{1}{2} \geq  (v_1)_*(x) + p_\mu(x)-\frac{1}{2}$. Taking infimum over $x \in C$ on both sides of the inequality, we get $h_n \geq \inf_C (v_1)_*+ \inf_C p_\mu \cdot \mu(C) -\frac{1}{2}$ on $C$ for every $n \geq N$, and hence a common lower bound on $C$ for all $h_n$. By  Proposition \ref{prop: harmlocunifbound}, there exists a subsequence $(h_{n_k})$ that converges locally uniformly to a (continuous) harmonic function $h$ on $V$. Since $v_{n_k} \to v$ and $h_{n_k} \to h$,  then also $p_{\mu_{n_k}}$ converge  as $k \to \infty$ and $\lim_{k \to \infty} p_{\mu_{n_k}} \geq p_\mu$ on $V$. \\
 We have $v(x) = h(x) - \lim_{k \to \infty} p_{\mu_{n_k}}(x) \leq h(x) - p_\mu(x) = v^*(x)$ in $V$,  so it is enough to prove that the set $E=\{x \in V: \lim_{k \to \infty} p_{\mu_{n_k}}(x) > p_\mu(x) \}$ has capacity zero. Suppose there is a compact subset $F \subset E$ such that ${\rm cap}_\zeta F >0$. Let $\nu$ be the equilibrium measure on $F$ with respect to $\zeta$. Then, by continuity of $p_\nu$ on $F' \subset F$ with ${\rm cap}_\zeta F >0$, portmanteau theorem and Fatou's lemma, 
 \begin{multline*}
 \int_{F'} p_\mu d \nu \!=\! \int_{F'} p_\nu d \mu \!=\!  \lim_{k \to \infty} \int_{F'} p_\nu d \mu_{n_k} \!=\!   \lim_{k \to \infty} \int_{F'} p_{\mu_{n_k}} d\nu \! \\
 \geq \! \int_{F'}  \lim_{k \to \infty} p_{\mu_{n_k}} d\nu\!>\! \int_{F'} p_\mu d\nu,
 \end{multline*}
 which is a contradiction. Hence $E$ has capacity zero. Since $v \leq u$ and $u^*=v^*$, the set $\{x: u(x) < u^*(x)\}$ is a subset of $\{x: v(x) < v^*(x)\}$ and also has capacity zero.
\end{proof}

\begin{remark}
In Proposition 8.26 (E) of \cite{BR10} it was shown that $u(x)=u^*(x)$ for every $x \in \sH^1$. The Brelot-Cartan principle in the Berkovich setting refines this relation, since nonempty subsets of $\sH^1$ have positive capacity. \\
\end{remark}

\section{The extremal function}\label{sec: extremal}

\subsection{An upper envelope}\label{sec: envelope}

 Fix  a point $\zeta \in \sP^1$. Define the following: 

\begin{definition} \label{defin: classL}
\begin{multline*} \mathcal{L}= \mathcal{L}(\zeta):=\bigl\{u: u\text{ is subharmonic in }\sP^1\setminus\{\zeta\},  \\ \forall a \ne \zeta \   u -\log [z,a]_\zeta \text{ extends to a function subharmonic in } \sP^1\setminus\{a\} \bigr\}. 
\end{multline*}

\end{definition}

\begin{definition} \label{defin: Qfcn}  Let  $\zeta$ and $\mathcal{L}(\zeta)$ be as in Definition \ref{defin: classL} and let $E \subsetneq \sP^1 \setminus \{\zeta\}$ be nonempty. Define 
\[
\cS\mapsto Q_E(\cS)
:=\sup \{u(\cS):u \in \mathcal{L}, u|E\leq 0\}.
\]
We call $Q_E$ the $\mathcal{L}$-extremal function (relative to $\zeta$) associated to $E$. 
\end{definition}

\begin{proposition} \label{prop: nonempty} If $E$ has positive capacity, then the class of functions defining $Q_E$ is not empty.
\end{proposition}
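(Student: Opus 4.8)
The plan is to exhibit one explicit member of the competing class $\{u\in\mathcal L:u|_{E}\le 0\}$ of Definition \ref{defin: Qfcn}. Normalized generalized spherical kernels are the natural candidates, playing here the role of $z\mapsto\log(|z-a|/R)$ in the classical Lelong class.

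Fix any point $a\in E$; since $E\subset\sP^1\setminus\{\zeta\}$ we have $a\ne\zeta$. Set
\[
C:=\sup_{w\in\overline{E}}\log[w,a]_\zeta,\qquad u(z):=\log[z,a]_\zeta-C\quad(z\in\sP^1\setminus\{\zeta\}).
\]
First I would verify that $C$ is a finite real number, which is where the hypotheses enter. On the compact set $\overline{E}$, contained in $\sP^1\setminus\{\zeta\}$, the map $w\mapsto[w,\zeta]_g$ is continuous off the diagonal and strictly positive, hence bounded below by some $m>0$; together with $[w,a]_g\le 1$ and $[w,a]_\zeta=[w,a]_g/([w,\zeta]_g[a,\zeta]_g)$ this gives $[w,a]_\zeta\le 1/(m\,[a,\zeta]_g)$, so $C<+\infty$. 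For $C>-\infty$: if $\overline{E}$ contains some point $w_0\ne a$, then $C\ge\log[w_0,a]_\zeta>-\infty$; and if $\overline{E}=\{a\}$, then $\operatorname{cap}_{\zeta}(E)=\operatorname{cap}_{\zeta}(\{a\})>0$ forces $a\in\sH^1$ (a type I singleton has capacity $\diam_{\zeta}(a)=0$), whence $C=\log\diam_{\zeta}(a)>-\infty$. So $u$ is a well-defined function on $\sP^1\setminus\{\zeta\}$ with $u|_{E}\le 0$ by the choice of $C$.

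It remains to check $u\in\mathcal L=\mathcal L(\zeta)$ of Definition \ref{defin: classL}. Since $\log[z,a]_\zeta=-u_{\delta_a}(z,\zeta)$ and $\delta_a$ is a probability measure with $\zeta\notin\supp\delta_a=\{a\}$, the function $\log[z,a]_\zeta$ is strongly subharmonic in $\sP^1\setminus\{\zeta\}$ (see Example 8.8 of \cite{BR10}), hence so is $u$. Now fix $b\ne\zeta$; I claim $u-\log[z,b]_\zeta$ extends to a function subharmonic in $\sP^1\setminus\{b\}$. If $b=a$ this function is the constant $-C$. If $b\ne a$, note that in the difference the $[z,\zeta]_g$-factors cancel:
\[
u(z)-\log[z,b]_\zeta=\log\frac{[z,a]_g\,[b,\zeta]_g}{[z,b]_g\,[a,\zeta]_g}-C;
\]
since $a,b\ne\zeta$, both $[z,a]_g$ and $[z,b]_g$ are continuous and strictly positive near $\zeta$, so the right-hand side extends continuously across $\zeta$, with value $-C$ there by symmetry of $[\cdot,\cdot]_g$. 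By Example 5.19 of \cite{BR10}, $\log[\cdot,a]_\zeta-\log[\cdot,b]_\zeta$ lies in $BDV(\sP^1)$ with Laplacian $(\delta_\zeta-\delta_a)-(\delta_\zeta-\delta_b)=\delta_b-\delta_a$ (the masses at $\zeta$ cancel, consistent with the continuity just observed); restricted to $\sP^1\setminus\{b\}$ this measure equals $-\delta_a\le 0$. Combined with upper semicontinuity and the correct limiting behavior at points of $\bP^1$, inherited from the two potentials $u_{\delta_a}(\cdot,\zeta)$ and $u_{\delta_b}(\cdot,\zeta)$, this shows the extension is subharmonic in $\sP^1\setminus\{b\}$. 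Hence $u\in\mathcal L$, $u|_{E}\le 0$, and the class defining $Q_E$ is nonempty.

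The only genuine content is the cancellation of the logarithmic singularity at $\zeta$ in the defining condition of $\mathcal L$, and checking that $C$ is an honest finite real — precisely where the standing assumption $\overline{E}\subset\sP^1\setminus\{\zeta\}$ and the positive-capacity hypothesis are used. (One could, alternatively, observe more cheaply that already the constant function $0$ lies in $\mathcal L(\zeta)$, because $-\log[z,a]_\zeta=u_{\delta_a}(z,\zeta)$ is subharmonic in $\sP^1\setminus\{a\}$ for every $a\ne\zeta$; the witness $u$ above is nevertheless preferable, as it carries the logarithmic pole at $\zeta$ characteristic of the Lelong class and is the function one wants downstream.)
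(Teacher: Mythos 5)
Your proof is correct, and it takes a somewhat different route from the paper's. The paper splits into two cases: when $E\subset\sH^1$ it uses the Green function $G(\cdot,\zeta,E)$ as the witness (this is where positive capacity is really invoked, via the existence of the equilibrium measure), and when $E\cap\bP^1\neq\emptyset$ it picks $a\in E\cap\bP^1$ and uses the truncated kernel $(\log[z,a]_\zeta-\log r)^+$ with $r\ge\sup_{z\in E}[z,a]_\zeta$. You instead produce a single, unified witness $\log[\cdot,a]_\zeta-C$ for an arbitrary $a\in E$, avoid the Green function machinery entirely, and isolate exactly where the hypotheses enter: $\overline{E}\subset\sP^1\setminus\{\zeta\}$ gives $C<+\infty$, and positive capacity is used only to exclude the degenerate case of a type~I singleton (where $C=-\infty$). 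Your verification of membership in $\mathcal{L}$ is sound; a marginally cleaner way to finish it is the change-of-kernel identity $[z,a]_\zeta/[z,b]_\zeta=[z,a]_b\cdot\bigl([a,b]_g[b,\zeta]_g/[a,\zeta]_g\bigr)$, which exhibits $u-\log[\cdot,b]_\zeta$ as $-u_{\delta_a}(\cdot,b)$ plus a constant, hence strongly subharmonic on $\sP^1\setminus\{b\}$ by the same Example~8.8 of \cite{BR10}, with no separate discussion of Laplacians, upper semicontinuity, or boundary behaviour at type~I points needed. Your closing parenthetical is also right as the class $\mathcal{L}$ is defined in the paper: since $-\log[\cdot,a]_\zeta=u_{\delta_a}(\cdot,\zeta)$ is already subharmonic on $\sP^1\setminus\{a\}$ for every $a\neq\zeta$, the constant $0$ lies in $\mathcal{L}$, so the competing class is nonempty with no capacity hypothesis at all; what the paper's witnesses (and yours) buy beyond this triviality is a member that vanishes on $E$ and carries the logarithmic pole at $\zeta$, which is what is actually used later.
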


\begin{proof} If $E \subset \sH^1$, then the Green function $G(\cdot,\zeta,E)$ of $E$ relative to $\zeta$ is in $\mathcal{L}$ and $G(\cdot,\zeta,E)=0$ on $E$. If $E \cap \mathbb{P}^1 \neq \emptyset$, take a point $a \in E \cap \mathbb{P}^1$ and $r \geq \sup_{z\in E}[z,a]$. The function  $(\log[z,a]_{\zeta}-\log r)^+:=\max \{\log[z,a]_{\zeta}-\log r,0\}$ is then in $\mathcal{L}$, and it is non-positive on $E$.  
\end{proof}

\noindent The following property holds (for the classical analog, see \cite{Klimek91}, Corollary 5.1.2).\\

\begin{proposition} \label{prop: nested} If $E_1 \supset E_2 \supset ...$ is a sequence of nonempty sets in $\sP^1 \setminus \{\zeta\}$ and $E=\bigcap_{n \to \infty}E_n$, then $Q_E =\lim_{n \to \infty}Q_{E_n}$.
\end{proposition}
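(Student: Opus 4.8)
The plan is to prove the two inequalities $\lim_n Q_{E_n}\le Q_E$ and $Q_E\le\lim_n Q_{E_n}$ separately; the first is formal, the second is where the work lies.

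\emph{Monotonicity and the easy inequality.} First I would observe that, since $E_{n+1}\subseteq E_n$, every $u\in\mathcal{L}$ with $u|_{E_n}\le 0$ also satisfies $u|_{E_{n+1}}\le 0$. Hence the admissible classes defining the $Q_{E_n}$ increase with $n$ (and all of them are nonempty, since each contains the zero function, which lies in $\mathcal{L}$ because $-\log[z,a]_\zeta$ is subharmonic on $\sP^1\setminus\{a\}$), so $Q_{E_1}\le Q_{E_2}\le\cdots$ and the pointwise limit $Q:=\lim_n Q_{E_n}=\sup_n Q_{E_n}$ exists. In the same way, since $E\subseteq E_n$, any $u$ admissible for $E_n$ is admissible for $E$, so $Q_{E_n}\le Q_E$ for all $n$, and therefore $Q\le Q_E$.

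\emph{The reverse inequality.} Here I would fix $u\in\mathcal{L}$ with $u|_E\le 0$ and a number $\varepsilon>0$, and aim to show $u-\varepsilon\le Q$; letting $\varepsilon\downarrow 0$ and taking the supremum over all admissible $u$ then gives $Q_E\le Q$. Since $u$ is upper semicontinuous, the superlevel set $A_\varepsilon:=\{x:u(x)\ge\varepsilon\}$ is closed, so the sets $E_n\cap A_\varepsilon$ form a decreasing sequence of compact subsets of $\sP^1$ whose intersection over $n$ is $E\cap A_\varepsilon=\emptyset$, because $u\le 0$ on $E$. By compactness, some $E_{n_0}\cap A_\varepsilon$ is already empty, i.e.\ $u<\varepsilon$ on $E_{n_0}$. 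It remains to note that $u-\varepsilon$ again belongs to $\mathcal{L}$: subtracting a constant from a subharmonic function leaves it subharmonic, and for each $a\ne\zeta$ the function $(u-\varepsilon)-\log[z,a]_\zeta=(u-\log[z,a]_\zeta)-\varepsilon$ still extends to a subharmonic function on $\sP^1\setminus\{a\}$. Since moreover $u-\varepsilon\le 0$ on $E_{n_0}$, we get $u-\varepsilon\le Q_{E_{n_0}}\le Q$ everywhere, as desired.

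Combining the two inequalities yields $Q_E=Q=\lim_n Q_{E_n}$, the limit being monotone increasing. The step I expect to be the main obstacle is the passage, inside the reverse inequality, from ``$u\le 0$ on $\bigcap_n E_n$'' to ``$u<\varepsilon$ on some $E_{n_0}$'': this is exactly where one uses the compactness of $\sP^1$, together with the fact that the $E_n$ are closed, so that the $E_n\cap A_\varepsilon$ are genuinely compact and a nested family of them with empty intersection must be eventually empty. The other ingredients --- monotonicity of $n\mapsto Q_{E_n}$, the inclusion $Q_{E_n}\le Q_E$, and the stability of $\mathcal{L}$ under subtraction of a positive constant --- are routine, but it is worth isolating the last one: it is precisely the insensitivity of the class $\mathcal{L}$ to additive constants that converts the soft information ``$u<\varepsilon$ on $E_{n_0}$'' into the usable constraint ``$u-\varepsilon\le 0$ on $E_{n_0}$'' needed to invoke the definition of $Q_{E_{n_0}}$.
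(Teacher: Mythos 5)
Your proof is correct and follows essentially the same route as the paper: monotonicity gives $\lim_n Q_{E_n}\le Q_E$, and for the reverse inequality the paper likewise uses upper semicontinuity of an admissible $u$ to conclude that the open set $\{u<\varepsilon\}$ contains $E_n$ for all large $n$, so that $u-\varepsilon\le Q_{E_n}$. Your version merely makes explicit two points the paper leaves implicit --- the compactness argument via the closed superlevel sets $E_n\cap\{u\ge\varepsilon\}$ (which, as you note, tacitly requires the $E_n$ to be closed) and the stability of $\mathcal{L}$ under subtracting a constant.
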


\begin{proof} Since $Q_{E_1} \leq ... \leq Q_{E_n} \leq ...\leq Q_E$, the limit $\lim_{n \to \infty}Q_{E_n}$ exists and  does not exceed $Q_E$. Conversely, for an $u \in \mathcal{L}$  and an $\varepsilon > 0$ the open set $\{ u < \varepsilon\}$ is a neighborhood of $E$ containing $E_n$ for all sufficiently large $n$. Hence for those $n$, $u-\varepsilon \leq Q_{E_n} \leq \lim_{n \to \infty}Q_{E_n}$. Since $\varepsilon$ was arbitrary, $Q_E \leq \lim_{n \to \infty}Q_{E_n}$.
\end{proof}

Having fixed a point $\zeta \in \sP^1$, we can compute an example of the function $Q$. Let $a  \in \sP^1\setminus\{\zeta\}$, $r \in ({\rm diam}_\zeta(a), {\rm diam}_\zeta(\zeta))$ and let $B_r=B(a,r)_\zeta:=\{z: [z,a]_\zeta\leq r\}$. When $a \in \sH^1$, the bounds on $r$ guarantee that  $B_r \ne \emptyset$ and $B_r \ne \sP^1$.\\

 \begin{proposition} \label{prop: pseudoball} (cf. Property 2.6 in \cite{Siciak81}, Example 5.1.1 in \cite{Klimek91}, Th\'eor\`eme 3.6 in \cite{Ze91}) 
 \[
Q_{B_r}(z)=\max \{\log[z,a]_{\zeta}-\log r,0\}=(\log[z,a]_{\zeta}-\log r)^+.\]
\end{proposition}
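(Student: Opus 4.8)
The plan is to prove the equality $Q_{B_r}(z) = (\log[z,a]_\zeta - \log r)^+$ by establishing the two inequalities separately. Write $\varphi(z) := (\log[z,a]_\zeta - \log r)^+ = \max\{\log[z,a]_\zeta - \log r, 0\}$.

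\emph{The easy inequality $Q_{B_r} \ge \varphi$.} First I would check that $\varphi \in \mathcal L(\zeta)$ and that $\varphi \le 0$ on $B_r$, which is immediate from the definition of $B_r = \{z : [z,a]_\zeta \le r\}$; then $\varphi$ is one of the competitors in the supremum defining $Q_{B_r}$, so $Q_{B_r} \ge \varphi$ pointwise. To see $\varphi \in \mathcal L$: the function $z \mapsto \log[z,a]_\zeta$ is subharmonic on $\sP^1 \setminus\{a\}$ (it equals $-u_{\delta_a}(z,\zeta)$, which by the Remark after the Example on Laplacians and the Remark following the definition of potentials is subharmonic away from $\zeta$; in fact $\log[z,a]_\zeta = -(-\log[z,a]_\zeta)$ and $\Delta_{\sP^1}(-\log[z,a]_\zeta) = \delta_a - \delta_\zeta$), hence so is $\varphi$ as the max of it with the constant $0$, on $\sP^1\setminus\{\zeta\}$ — one must check subharmonicity across $\{a\}$, where $\varphi \equiv 0$ locally precisely because $[a,a]_\zeta = \diam_\zeta(a) < r$, so near $a$ the function $\varphi$ vanishes identically and is harmonic there. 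For the required behavior at other points $b \ne \zeta$: $\varphi - \log[z,b]_\zeta$ should extend subharmonically across $b$; near $b$, since $b \ne a$ and (if $b \notin B_r$) possibly $\varphi = \log[z,a]_\zeta - \log r$, the difference is $\log[z,a]_\zeta - \log[z,b]_\zeta - \log r$ which extends harmonically across $b$ (the $\log[z,b]_\zeta$ singularity cancels the one coming from... here one uses that near $b$ either $\varphi$ is the smooth branch or $\varphi \equiv 0$; in the latter case $-\log[z,b]_\zeta$ is subharmonic near $b$ anyway). This bookkeeping is routine.

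\emph{The main inequality $Q_{B_r} \le \varphi$.} This is the substantive part. Take any $u \in \mathcal L(\zeta)$ with $u|_{B_r} \le 0$; I must show $u(z) \le \varphi(z)$ for all $z \ne \zeta$. On $B_r$ this is trivial since $\varphi \ge 0 \ge u$ there. So fix $z_0 \notin B_r$, i.e. $[z_0,a]_\zeta > r$. The strategy is a maximum-principle argument on the domain $U := \sP^1 \setminus B_r$ (note $B_r$ is closed and connected by property (iii) of the generalized spherical kernel, so $U$ is a domain, and $\zeta \in U$ since $r < \diam_\zeta(\zeta) = [\zeta,\zeta]_\zeta$ forces $\zeta \notin B_r$). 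Consider the function $w(z) := u(z) - \log[z,a]_\zeta + \log r$ on $U$. By the definition of $\mathcal L$ (with the choice $b = a$... but $a \in B_r$, so $a \notin U$ — good, we do not need the extension across $a$ here), $u - \log[z,a]_\zeta$ extends subharmonically across $\zeta$; hence $w$ is subharmonic on $U$ including at $\zeta$ — more precisely $w$ is subharmonic on $U = \sP^1 \setminus B_r$. Its boundary is $\partial U \subset \partial B_r \subset \{[z,a]_\zeta = r\}$, where $\limsup w \le 0 - \log r + \log r = 0$ using $u \le 0$ on $B_r \supset \partial U$ and $\log[z,a]_\zeta = \log r$ on the boundary. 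But $U \ne \sP^1$, so the maximum principle (Proposition \ref{prop: maxprinciple}) gives $w \le 0$ on $U$, i.e. $u(z) \le \log[z,a]_\zeta - \log r$ on $U$; combined with $u \le 0$ on $B_r$ this yields $u \le \varphi$ everywhere, hence $Q_{B_r} \le \varphi$.

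\emph{Main obstacle.} The delicate point is verifying that $w = u - \log[z,a]_\zeta + \log r$ is genuinely subharmonic on all of $U = \sP^1 \setminus B_r$, in particular at the point $\zeta$: this is exactly where the defining property of the class $\mathcal L$ (the controlled behavior $u - \log[z,a]_\zeta$ extends subharmonically across $\zeta$) is used, and one must make sure the constant $\log r$ and the sign conventions are correct so that the boundary estimate comes out $\le 0$ rather than going the wrong way. A secondary subtlety is the careful description of $\partial B_r$ (it is nonempty — otherwise $B_r = \sP^1$, excluded by $r < \diam_\zeta(\zeta)$ — and contained in the ``sphere'' $[z,a]_\zeta = r$), which is needed to apply Proposition \ref{prop: maxprinciple} with $M = 0$. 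One should also double-check the edge cases in the type classification of $a$ (types I/IV versus II/III) only enter in confirming $B_r$ is a nontrivial closed ball, already handled by property (iii); they do not affect the argument. The verification that $\varphi \in \mathcal L$ in the first part is longer to write but presents no conceptual difficulty.
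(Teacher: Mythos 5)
Your proposal is correct and takes essentially the same route as the paper's proof: the lower bound by exhibiting $(\log[z,a]_\zeta-\log r)^+$ as a member of $\mathcal{L}$ that is non-positive on $B_r$ (exactly as in Proposition \ref{prop: nonempty}), and the upper bound by applying the maximum principle to $u-\log[\cdot,a]_\zeta+\log r$ on the domain $\sP^1\setminus B_r$, where subharmonicity at $\zeta$ comes from the defining extension property of $\mathcal{L}$. Only two cosmetic slips, neither affecting the argument: $\log[\cdot,a]_\zeta$ is subharmonic on $\sP^1\setminus\{\zeta\}$ (not on $\sP^1\setminus\{a\}$), and the class condition requires $u-\log[\cdot,b]_\zeta$ to extend subharmonically across $\zeta$ to $\sP^1\setminus\{b\}$, not across $b$.
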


\begin{proof} Recall that, by Formula 5 in Section 5.2 of \cite{Rumely89}, the function on the right-hand side is the Green function of $B_r$.   Arguing like in Proposition \ref{prop: nonempty}, we get that  $(\log[z,a]-\log r)^+ \leq Q_{B_r}(z)$. In the reverse direction, let $v \in \mathcal{L}=\mathcal{L}(B_r,\zeta)$.  The function $v(z) - \log [z,a]_\zeta + \log r$ extends to a subharmonic function  in $A_r=\sP^1 \setminus B_r$.  By Remark 4.13 in \cite{BR10}), the set $A_r$  is the connected component of $\sP^1 \setminus \{x_r\}$ containing $\zeta$, where $x_r$ is the unique point on the path from $a$ to $\zeta$ with ${\rm diam}_\zeta(x_r)=r$. Moreover,  $v(z) - \log [z,a]_\zeta + \log r \leq 0$ on $\partial B_r=\partial A_r$,  and so by the maximum principle $v(z) - \log [z,a]_\zeta + \log r \leq 0$ in $A_r$. Hence  on $\sP^1 \setminus \{\zeta\}$ we have $v(z) \leq \log [z,a]_\zeta - \log r$ and  finally 
$Q_{B_r} \leq (\log[z,a]-\log r)^+$.
\end{proof}

The behavior of locally uniformly bounded families in $\mathcal{L}$ is analogous to what happens in $\mathbb{C}^N$. Namely, the following holds:

\begin{proposition} (cf. \cite{Klimek91}, Proposition 5.2.1; \cite{Siciak81}, Theorem 3.5; \cite{Ze91}, Lemme 3.10) \label{prop: uniffamilies} Let $\mathcal{U} \subset \mathcal{L}=\mathcal{L}(\zeta)$ be a non-empty family,   let $u=\sup\{v: v \in \mathcal{U}\}$ and let $\Omega$ be a  connected component  of $\sP^1\setminus\{\zeta\}$. If the set $A=\{z \in \Omega: u(z)<+\infty\}$ has  nonzero capacity, then the family $\mathcal{U}$ is locally uniformly bounded from above in $\Omega$. If moreover $\mathcal{U}$ is locally uniformly bounded from above in $\sP^1\setminus\{\zeta\}$, then $u^*\in \mathcal{L}$.
\end{proposition}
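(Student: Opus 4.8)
The plan is to split Proposition~\ref{prop: uniffamilies} into its two assertions and treat each by reduction to the already-established Brelot-Cartan-type machinery and Hartogs-type lemmas. For the first assertion, suppose for contradiction that $\mathcal{U}$ is not locally uniformly bounded from above in $\Omega$. Then there is a point $p\in\Omega$ with a compact neighborhood $K\subset\Omega$ on which $\sup_{v\in\mathcal U}\sup_K v=+\infty$, so we may pick $v_n\in\mathcal U$ and points $x_n\in K$ with $v_n(x_n)\to+\infty$. The key observation is that each $v\in\mathcal L$ satisfies a uniform upper bound of the form $v(z)\le -\log[z,\zeta]_\zeta + c_v$ on a fixed neighborhood of $\zeta$ coming from the defining condition $v-\log[z,a]_\zeta$ subharmonic near $a$ (applied with $a$ the point used to describe $\Omega$, or rather by passing to the compactified picture and using the maximum principle on the simple domain $\Omega\setminus(\text{small ball around }\zeta)$). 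First I would normalize: after subtracting a constant, consider $w_n := v_n - \sup_{K} v_n$, so the $w_n$ are subharmonic in $\sP^1\setminus\{\zeta\}$, locally uniformly bounded above near $\zeta$ (by the $\mathcal L$-condition), and $\sup_K w_n = 0$. By Proposition~\ref{prop: Hartogs} a subsequence either tends to $-\infty$ locally uniformly on compacts of $\Omega$ or converges pointwise on $\sH^1$ to a subharmonic $W$; the first alternative contradicts $\sup_K w_n=0$ together with the sub-mean-value / maximum-principle control, so we are in the second case. Then $u = \sup\mathcal U \ge v_n = w_n + \sup_K v_n$, and since $\sup_K v_n\to+\infty$ while $w_n$ is bounded below near a positive-capacity set, the set $\{u=+\infty\}\cap\Omega$ would have to contain, up to a capacity-zero set, the whole region where the limit is finite — I would make this precise by invoking the Brelot-Cartan principle (Theorem~\ref{prop: Cartanprinciple}) applied to the family $\{w_n\}$: the set where $(\sup_n w_n)$ differs from its u.s.c.\ regularization has capacity zero, and on the complement $u=+\infty$, contradicting that $A=\{u<+\infty\}$ has positive capacity. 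This gives local uniform boundedness from above on $\Omega$.

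For the second assertion, assume $\mathcal U$ is locally uniformly bounded above in all of $\sP^1\setminus\{\zeta\}$. Then by Proposition~\ref{prop: netsbounded}, $u^*$ is subharmonic in $\sP^1\setminus\{\zeta\}$ and agrees with $u$ on $\sH^1$. It remains to check the membership condition in $\mathcal L$: for each $a\ne\zeta$ we must show $u^*(z)-\log[z,a]_\zeta$ extends to a function subharmonic in $\sP^1\setminus\{a\}$. For this I would work near $\zeta$ first. Writing $\phi_a(z):=\log[z,a]_\zeta$, each $v\in\mathcal U$ gives $v-\phi_a$ subharmonic on $\sP^1\setminus\{a\}$ by hypothesis; these functions are locally uniformly bounded above (since $v$ is, and $\phi_a$ is bounded below away from $a$, being $\le 0$ with value $-\infty$ only at $a$... more precisely $-\log[z,a]_\zeta\ge 0$ so $\phi_a\le 0$, hence $v-\phi_a\ge v$, and boundedness above of $v-\phi_a$ on compacts of $\sP^1\setminus\{a\}$ follows from $[z,a]_\zeta$ being bounded below there). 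So $\sup_{v\in\mathcal U}(v-\phi_a)$ has a subharmonic u.s.c.\ regularization on $\sP^1\setminus\{a\}$ by Proposition~\ref{prop: netsbounded} again. The point is that $\big(\sup_v(v-\phi_a)\big)^* = u^*-\phi_a$ away from $a$ (since subtracting the continuous-off-$a$ function $\phi_a$ commutes with $\sup$ and with u.s.c.\ regularization on $\sP^1\setminus\{a\}$, and $u$ and $\sup_v(v-\phi_a)$ differ by $\phi_a$ there), and this already is the desired subharmonic extension across $\zeta$ — the only place $u^*$ itself was not a priori defined/subharmonic is at $\zeta$, but $\zeta\ne a$ so $\zeta$ lies in the domain $\sP^1\setminus\{a\}$ where we now have subharmonicity of $u^*-\phi_a$.

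The main obstacle I anticipate is the first assertion: making rigorous the passage from ``$\sup_K v_n\to\infty$'' to ``$\{u=+\infty\}$ has positive capacity,'' because the normalized functions $w_n=v_n-\sup_K v_n$ need not lie in $\mathcal L$ (subtracting a constant is fine, but I still need them locally uniformly bounded \emph{above near $\zeta$} to run Proposition~\ref{prop: Hartogs} on a domain containing $\zeta$ in its interior, and I need a lower bound near the positive-capacity set $A$ to conclude blow-up). The cleanest route is probably to avoid $\zeta$ entirely: restrict attention to a simple subdomain $V$ with $\overline V\subset\Omega$ meeting $A$ in a set of positive capacity (possible since capacity-zero sets are countably subadditive, Proposition~\ref{prop: countsum}, so $A\cap V$ has positive capacity for some $V$ in an exhaustion), apply Proposition~\ref{prop: Hartogs} to $\{v_n|_V\}$ directly, and use that the limit $W$ is finite on $V\cap\sH^1$ together with the inequality $u\ge v_n$ and $\sup_K v_n\to\infty$ to force $u\equiv+\infty$ on $V\cap\sH^1$, hence on a set of positive capacity — contradicting the hypothesis on $A$. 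The Brelot-Cartan principle then is not even needed for this direction, only Proposition~\ref{prop: Hartogs} and the maximum principle; I would present it that way if the details work out, and fall back on Theorem~\ref{prop: Cartanprinciple} only if controlling $W$ on the type-I part of $A$ turns out to require it.
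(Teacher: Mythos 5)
Your strategy for the first assertion has a genuine gap exactly at the type I points, and this is the heart of the matter. Proposition \ref{prop: Hartogs} gives pointwise convergence of the normalized functions $w_{n_k}$ to the subharmonic limit $W$ only on $U\cap\sH^1$, so your argument (in both variants) yields $u=+\infty$ only quasi-everywhere on the $\sH^1$-part of the domain. But $A=\{u<+\infty\}$ may consist entirely of type I points, and subsets of $\bP^1$ can very well have positive capacity (e.g.\ $\bZ_p\subset\bC_p$), so no contradiction is reached. Your proposed fallback does not repair this: Theorem \ref{prop: Cartanprinciple} compares $\sup_n w_n$ with its regularization, whereas at a type I point $x$ you need a lower bound for $w_{n_k}(x)$ \emph{along the subsequence} (a $\liminf$ statement) in order to conclude $v_{n_k}(x)=w_{n_k}(x)+\sup_K v_{n_k}\to+\infty$; a statement about the supremum says nothing about that. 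Invoking Theorem \ref{prop: Cartanprinciple} would also smuggle in the second-countability hypothesis, which the paper deliberately confines to Section \ref{sec: Cartan}. There is moreover a logical slip at the end of your ``cleanest route'': showing $u\equiv+\infty$ on a positive-capacity subset of $V\cap\sH^1$ does not contradict the hypothesis that $A$ has positive capacity; you must show that $A$ itself has capacity zero. Finally, your uniform local upper bound for the $w_n$ ``by the $\mathcal L$-condition'' is unjustified as stated, since the constants there depend on the individual function; the correct source of uniformity is $w_n\le Q_K$ (or $\le Q_{B_r}$ for a ball $B_r\subset K$), i.e.\ the extremal function itself. The paper's proof handles all points at once with the classical weighted-series trick: choose $u_j$ with $m_j=\sup_{\overline V}u_j\ge 2^j$, note $u_j-m_j\le Q_{\overline V}$, use Hartogs only to produce one point $x_0$ and $\varepsilon>0$ with $u_j(x_0)-m_j\ge\log\varepsilon$ along a subsequence, and set $v=\sum_j 2^{-j}(u_j-m_j)$; then $v$ is subharmonic, $v(x_0)>-\infty$, while $v\equiv-\infty$ on $A$ because $\sum_j 2^{-j}m_j=+\infty$, so $A$ has capacity zero by Corollary 8.40 of \cite{BR10} --- an argument valid at type I points as well.

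Your treatment of the second assertion is viable in outline but also has a gap, this time near $\zeta$. To apply Proposition \ref{prop: netsbounded} to the shifted family $\{v-\log[\cdot,a]_\zeta\}_{v\in\mathcal U}$ on $\sP^1\setminus\{a\}$ you need it locally uniformly bounded above \emph{near $\zeta$}, and your justification (boundedness of $v$ plus a positive lower bound for $[\cdot,a]_\zeta$ on compacts) only covers compacts avoiding $\zeta$, since the hypothesis bounds $\mathcal U$ only on $\sP^1\setminus\{\zeta\}$. This can be fixed by a maximum-principle argument on a small simple neighborhood $W_0$ of $\zeta$ with $\partial W_0\subset\sP^1\setminus\{\zeta\}$, applied to the subharmonic extensions of $v-\log[\cdot,a]_\zeta$, using the uniform bound on $\partial W_0$. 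The paper sidesteps the issue by instead verifying subharmonicity of $u^*-\log[\cdot,a]_\zeta$ through the harmonic-domination criterion (Theorem 8.19 of \cite{BR10}) on simple subdomains, which requires only boundary control.
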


\begin{proof}  Suppose $\mathcal{U}$ is not locally uniformly bounded in $\Omega$. Then there exists  a simple domain $V \subset \overline{V} \subset \Omega$  and a sequence $(u_j)$ in  $\mathcal{U}$ such that $m_j:=\sup_{\overline{V}}u_j \geq 2^j$ for every $j \in \mathbb{N}$. We have $u_j \leq m_j + Q_{\overline{V}}$ on $\sP^1\setminus \{\zeta\}$ for every $j \geq 1$. Fix  a  simple domain  $V'$ such that $\overline{V} \subset V' \subset \Omega$. The family $(u_j -m_j)_{j \in \mathbb{N}}$ is uniformly bounded in $V'$.  We claim that there exists an $x_0 \in V'$  such that $\limsup_{j \to \infty} u_j(x_0) >-\infty$. Suppose to the contrary that  $\limsup_{j \to \infty} u_j \equiv -\infty$ in $V'$. Then $\limsup_{j \to \infty} \exp [u_j(x_0)-m_j] \equiv 0$ in $V'$, so $\lim_{j \to \infty} \exp [u_j(x_0)-m_j] \equiv 0$ in $V'$. Note that  all  functions $\exp [u_j(x_0)-m_j]$ are subharmonic (by Corollary 8.29 in \cite{BR10}) and have an uniform upper bound in $V'$.    Hence, by Proposition \ref{prop: Hartogs} and Corollary \ref{coro: littleHartogs}, there exists a subsequence $(u_{j_n})$ and an $n_0 \geq 1$ such that $\sup_{V'} \exp [u_{j_n}(x_0)-m_{j_n}] \leq 1/2$  for every $n \geq n_0$. Taking natural logarithm of both sides of this inequality we get a contradiction with the definition of $m_j$. The claim allows us (passing to a subsequence of $u_j$ if necessary) to fix an $x_0 \in \Omega$ and an $\varepsilon >0$  such that  $u_j(x_0)-m_j > \log \varepsilon$ for every $j \geq 1$.  Define next 
\[
v(x)=\sum_{j=1}^{\infty}\frac{1}{2^j}(u_j(x)-m_j)
\]
for every $x \in \sP^1\setminus \{\zeta\}$. 
Note that on every simple subdomain of $\Omega$ the function $v$ is the limit of a uniformly convergent sequence of subharmonic functions. Therefore (by Proposition 8.26 (C) in \cite{BR10}) $v$ is subharmonic in $\Omega$. 
If $x \in A \cap \Omega$, then $\sup_j u_j(x) <+\infty$, and so $v(x)=-\infty$, while $v(x_0) \geq \log \varepsilon >-\infty$. Hence, by Corollary 8.40 in \cite{BR10}, $A \cap \Omega$ has capacity zero.\\
 Now, if  the family $\mathcal{U}$ is locally uniformly bounded in $\sP^1 \setminus \{\zeta\}$, then by Proposition \ref{prop: netsbounded}, $u^*$ is subharmonic on   $\sP^1 \setminus \{\zeta\}$.  Let us fix arbitrarily an $a \in \sP^1 \setminus \{\zeta\}$ and a simple subdomain $W$ of $\sP^1 \setminus \{a\}$. Let $h$ be a harmonic function in $W$ such that $u^* -\log[\cdot,a]_\zeta \leq h$ on $\partial W$.  Then for every $v \in \mathcal{U}$, $v -\log[\cdot,a]_\zeta \leq h$ on $\partial W$.  By Theorem 8.19 in \cite{BR10} or Corollary 3.1.12 in \cite{ThuillierThesis} , $v -\log [\cdot,a]_\zeta \leq  h$ in $W$ (when $\zeta \in W$, we consider the subharmonic extension of $v -\log[\cdot,a]_\zeta$ to $W$), so $u   \leq \log [\cdot,a]_\zeta + h$ in $W$.  Since  $\log[\cdot,a]_\zeta + h$ is continuous as a  function of $z \in W$, we also have $u^*   \leq \log[\cdot,a]_\zeta + h$ in $W$ and so (again by Theorem 8.19 in \cite{BR10}), $u^*-\log[\cdot,a]_\zeta$ extends as a subharmonic function in $W$.  Hence $u^* \in \mathcal{L}$.
  
  \end{proof}
  
  \begin{coro}\label{coro: polarsets} (cf. Corollary 3.9 in \cite{Siciak81}, Theorem 5.2.4 in \cite{Klimek91})
  For  a nonempty $E \subset \sP^1 \setminus \{\zeta\}$ the following are equivalent:\\
  (i) There exists a function $v$ subharmonic in $\sP^1 \setminus \{\zeta\}$ such that $E \subset \{x: v(x)=-\infty\}$.\\
  (ii) The capacity of $E$ equals zero.\\
  (iii) There exists a function $w \in \mathcal{L}(\zeta)$ such that $E \subset \{x: w(x)=-\infty\}$.
  \end{coro}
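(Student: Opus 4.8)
The plan is to prove the equivalence of (i), (ii), (iii) by establishing the cycle (iii) $\Rightarrow$ (i) $\Rightarrow$ (ii) $\Rightarrow$ (iii). The implication (iii) $\Rightarrow$ (i) is immediate, since any $w \in \mathcal{L}(\zeta)$ is by definition subharmonic in $\sP^1 \setminus \{\zeta\}$, so one simply takes $v = w$.

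For (i) $\Rightarrow$ (ii), suppose $v$ is subharmonic in $\sP^1 \setminus \{\zeta\}$ with $E \subset \{v = -\infty\}$. The set $\sP^1 \setminus \{\zeta\}$ has at most countably many connected components (each being open, and $\sP^1$ being second countable would suffice, but one can also argue directly that the tangent directions at $\zeta$ form a countable set, or circumvent this by noting each component meets $\sH^1$); on each component $\Omega$, $v|_\Omega$ is subharmonic and not identically $-\infty$ unless $\Omega \cap \sH^1$ forces otherwise — in any case, Corollary 8.40 in \cite{BR10} (which says the $-\infty$ set of a subharmonic function not identically $-\infty$ on a domain has capacity zero) applies to each component where $v \not\equiv -\infty$. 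On components where $v \equiv -\infty$, one observes such a component would have to be all of $\sP^1 \setminus \{\zeta\}$ restricted appropriately, or more simply: a subharmonic function cannot be $\equiv -\infty$ on a nonempty Berkovich domain (by the upper semicontinuity and the $BDV$ requirement, or because $-\infty$ is excluded on $\sH^1$ points in the structure). So $E \cap \Omega$ has capacity zero for each component $\Omega$, and then Proposition \ref{prop: countsum} gives $\mathrm{cap}_\zeta(E) = 0$.

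For (ii) $\Rightarrow$ (iii), this is where Proposition \ref{prop: uniffamilies} does the work. Assume $\mathrm{cap}_\zeta(E) = 0$. The strategy is to exhibit $w \in \mathcal{L}(\zeta)$ equal to $-\infty$ on $E$. One natural candidate is to take $w = Q_E^*$ or, following the classical argument, to build $w$ as a sum $\sum_j 2^{-j} w_j$ where each $w_j$ is chosen from a suitable family whose upper envelope has small finiteness set. Concretely: since $\mathrm{cap}_\zeta(E) = 0$, for the family $\mathcal{U}$ consisting of those $u \in \mathcal{L}$ with $u \leq 0$ on a fixed large ball and $u(x_0) \leq 0$ at a fixed point, one shows (via the first part of Proposition \ref{prop: uniffamilies}, run in reverse) that $\sup \mathcal{U}$ must be $+\infty$ on all of $E$ — because if $\{u < +\infty\}$ had positive capacity it would contradict... actually the cleanest route: mimic the proof of Proposition \ref{prop: uniffamilies} directly. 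Take a sequence $u_j \in \mathcal{L}$ with $u_j \leq 0$ off a neighborhood shrinking to $E$ and $u_j \to +\infty$ on $E$ (such exist because $E$ is polar — e.g. translates/scalings of Green functions of small balls around points of $E$, normalized suitably); normalize so $u_j(x_0) \geq \log\varepsilon$ at a fixed point $x_0 \notin E$ in each component; then $w := \sum_j 2^{-j}(u_j - m_j)$ with $m_j = \sup u_j$ is subharmonic, equals $-\infty$ on $E$, finite at $x_0$, and one checks $w$ (after adding back a fixed element of $\mathcal{L}$ or rescaling) lies in $\mathcal{L}(\zeta)$ by verifying the growth condition at each $a \neq \zeta$ via the maximum-principle argument at the end of Proposition \ref{prop: uniffamilies}.

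The main obstacle I anticipate is the construction in (ii) $\Rightarrow$ (iii): producing an explicit countable family of $\mathcal{L}$-functions that blows up precisely on the given capacity-zero set $E$ and controlling the series so the limit genuinely belongs to $\mathcal{L}(\zeta)$ (i.e. has the correct logarithmic singularity structure at every point $a \neq \zeta$, not just subharmonicity). In the classical case this is handled by potentials of measures living on $E$ with infinite energy; the non-archimedean analog should work the same way — take a probability measure $\nu$ on (the closure of) $E$ with $I_\zeta(\nu) = +\infty$, which exists precisely because $\mathrm{cap}_\zeta(E) = 0$, and set $w = -u_\nu(\cdot, \zeta) + C$ for a suitable constant; then $-u_\nu(\cdot,\zeta)$ is strongly subharmonic on $\sP^1 \setminus \{\zeta\}$ (by the Remark after the potential definition), satisfies the $\mathcal{L}$-growth condition (its difference with $\log[z,a]_\zeta$ is, up to sign and the mass-one normalization, again a potential, hence harmonic away from $\supp\nu$ and with the right local behavior — this needs the fact that $u_\nu(\cdot,\zeta) - (-\log[\cdot,a]_\zeta) = \int(-\log[\cdot,y]_\zeta + \log[\cdot,a]_\zeta)d\nu(y)$ extends across $a$), and $w = -\infty$ on a set containing $E$ up to capacity zero — one may need to intersect with the actual $-\infty$ set, but since that set still contains a capacity-zero exceptional modification of $E$ and we only need $E$ inside it, a small additional argument using that $\nu$-a.e. point has infinite potential plus Proposition \ref{prop: countsum} closes the gap. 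I would present (ii) $\Rightarrow$ (iii) via this measure-theoretic construction as the cleanest and most robust option.
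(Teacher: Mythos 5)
Your steps (iii)$\Rightarrow$(i) and (i)$\Rightarrow$(ii) end up where the paper is, but the crux, (ii)$\Rightarrow$(iii), has a genuine gap. The route you say you would present --- take a probability measure $\nu$ on $\overline{E}$ with $I_\zeta(\nu)=+\infty$ and set $w=-u_\nu(\cdot,\zeta)+C$ --- does not work as stated. The existence of an infinite-energy measure is not ``precisely because'' $\mathrm{cap}_\zeta(E)=0$ (a Dirac mass at any type I point has infinite energy, whatever the capacity), and $\overline{E}$ may well have positive capacity even when $\mathrm{cap}_\zeta(E)=0$ (e.g.\ a countable dense subset of a set like $\mathbb{Z}_p$), so a measure on the closure says nothing about $E$. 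More fundamentally, $I_\zeta(\nu)=+\infty$ does not give $u_\nu\equiv+\infty$ on $E$, nor even $\nu$-a.e.; what you actually need is an Evans potential (Lemma 7.18 of \cite{BR10}), i.e.\ a measure whose potential is identically $+\infty$ on the set, and that device only covers \emph{compact} $E$, whereas the corollary is for arbitrary nonempty $E$ (whose closure may even contain $\zeta$, in which case $-u_\nu$ need not lie in $\mathcal{L}(\zeta)$). Your patch --- ``$E$ up to capacity zero \dots plus Proposition \ref{prop: countsum} closes the gap'' --- is circular: absorbing a leftover capacity-zero set into the $-\infty$ set of a single $\mathcal{L}$-function is exactly the implication being proved. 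The paper avoids all of this by applying Proposition \ref{prop: uniffamilies} to the family $\mathcal{U}=\{w\in\mathcal{L}:w|_E\le 0\}$: the construction inside its proof, $w=\sum_j 2^{-j}(u_j-m_j)$ with $u_j\in\mathcal{U}$ and $m_j=\sup_{\overline{V}}u_j\ge 2^j$, is automatically $-\infty$ on $E$ (since $u_j\le 0$ there) and stays in $\mathcal{L}$ because the weights sum to $1$; no measure-theoretic input and no compactness are needed. Your alternative sketch of this envelope route is itself garbled: you ask for $u_j\to+\infty$ on $E$ and $u_j\le 0$ off a neighborhood (the wrong way around for the sum to blow down on $E$), justify their existence ``because $E$ is polar'' (circular), and propose scalings of Green functions, which leave the class $\mathcal{L}$, since $u-\log[\cdot,a]_\zeta$ must extend subharmonically across $\zeta$. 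The paper's Remark following the corollary does give the Evans-potential construction you have in mind, but explicitly only for compact $F$, as an illustration rather than as the proof.

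A smaller point on (i)$\Rightarrow$(ii): your component-counting detour is both unnecessary and partly false. At a type II point $\zeta$ the tangent directions are in bijection with the projective line over the residue field, which can be uncountable, and second countability is assumed only in Section \ref{sec: Cartan}, so neither of your justifications for ``at most countably many components'' is available. The paper simply cites Corollary 8.40 of \cite{BR10} for the open set $\sP^1\setminus\{\zeta\}$; note also that a subharmonic function in the sense of \cite{BR10} is never identically $-\infty$ on a domain (it is finite on $\sH^1$ by the $BDV$ requirement), so that case does not arise. If you insist on arguing componentwise, the correct fix is that capacity of an arbitrary set is tested on compact subsets, and a compact subset of $E$ meets only finitely many of the open, pairwise disjoint components, so Proposition \ref{prop: countsum} suffices without any countability assumption.
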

  \begin{proof} The implication $(iii) \Rightarrow (i)$ is obvious and $(i) \Rightarrow (ii)$ is Corollary 8.40 in \cite{BR10}. Applying Proposition \ref{prop: uniffamilies} to the family $\mathcal{U}=\{w \in \mathcal{L}: w|_E \leq 0\}$ yields $(ii) \Rightarrow (iii)$.
  \end{proof}
  
  Remark: For a compact set $F \subset  \sP^1\setminus\{\zeta\}$ one can also directly construct a function $w$  of class  $\mathcal{L}(\zeta)$ such that $F \subset \{w=-\infty\}$.  For $N \in \mathbb{N}$ let $P_N$ be a pseudopolynomial $P_N(z)=\prod_{i=1}^N [z,a_i]_\zeta$ with $a_1, ..., a_N \in F$. For every $k \in \mathbb{N}$, let us pick $a_1,...,a_{N_k} \in F$ and a real number $p_k \geq 0$ such that $\sum_{k=1}^\infty p_k=1$. Let further 
\[
w(z):=\sum_{k=1}^\infty p_k\bigl (\frac{1}{N_k}\log P_{N_k}(z)\bigr ). 
\]
 The function $w$ is the negative of an Evans potential for $F$ (see Lemma 7.18 in \cite{BR10}). It is easy to see that $w \in \mathcal{L}(\zeta)$.\\

Let $E \subset \sP^1 \setminus \{\zeta\}$ be an arbitrary compact  with positive capacity and let $G(\cdot,\zeta,E):= V_E-p_{\nu_\zeta}(\cdot)$ be the Green function with pole at $\zeta$ (see subsection \ref{sec: Green}).   Our goal is  to show that the equality $Q_E^*=G(\cdot,E,\zeta)$ holds. From Proposition \ref{prop: pseudoball} we already know it for a class of sets $B_r$. Now we will  establish it in another  important special case, that of the complement of a simple domain. Note that in these special cases $Q_E$ is continuous, so $Q_E=Q_E^*$.

\begin{proposition} \label{prop: simpledom} Let $V \subset \sP^1$ be a simple domain such that $\zeta \in V$. Then $Q_{\sP^1 \setminus V}(\cdot) = G(\cdot, \sP^1 \setminus V, \zeta)$. In particular, the function $Q_{\sP^1 \setminus V}$ is continuous in $\sP^1\setminus\{\zeta\}$ and strongly harmonic in $V\setminus \{\zeta\}$.
\end{proposition}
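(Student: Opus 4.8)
The plan is to prove the two inequalities $Q_{\sP^1\setminus V}\le G(\cdot,\sP^1\setminus V,\zeta)$ and $Q_{\sP^1\setminus V}\ge G(\cdot,\sP^1\setminus V,\zeta)$ separately, then read off continuity and harmonicity from the known properties of the Green function in Proposition~\ref{prop: Green}. Write $E=\sP^1\setminus V$, a compact set, and note $\zeta\in V$ so $\zeta\notin E$; since $V$ is a domain, $V=D_\zeta$ is exactly the connected component of $\sP^1\setminus E$ containing $\zeta$. The Green function $G(\cdot,\zeta,E)$ is finite on $\sP^1\setminus\{\zeta\}$, subharmonic there, vanishes on $\sP^1\setminus D_\zeta=E$ outside a set of capacity zero, and for every $a\ne\zeta$ the function $G(\cdot,\zeta,E)-\log[z,a]_\zeta$ extends harmonically near $\zeta$; hence $G(\cdot,\zeta,E)\in\mathcal{L}(\zeta)$. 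Since it is $\le 0$ on $E$ except on a capacity-zero set, I would first upgrade this to $G\le 0$ on \emph{all} of $E$: $G$ is nonnegative everywhere and subharmonic, its zero set contains $E$ off a polar set, and polar sets are negligible for subharmonic functions, so in fact $G\equiv 0$ on $E$ (alternatively, use that $Q_E\le 0$ forces $u|_E\le 0$ and argue via semicontinuity). Thus $G(\cdot,\zeta,E)$ is an admissible competitor in Definition~\ref{defin: Qfcn}, giving $Q_E\ge G(\cdot,\zeta,E)$ on $\sP^1\setminus\{\zeta\}$.

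For the reverse inequality I would use the maximum principle together with the fact that $V$ is a simple domain, so $\partial V$ is a nonempty finite set of type II/III points, none of which is $\zeta$. Let $u\in\mathcal{L}(\zeta)$ with $u|_E\le 0$. Pick any $a\in E$ and consider $\psi:=u-G(\cdot,\zeta,E)$. On $V\setminus\{\zeta\}$, $G(\cdot,\zeta,E)$ is strongly harmonic (Proposition~\ref{prop: Green}(v)), and near $\zeta$ both $u-\log[z,a]_\zeta$ and $G(\cdot,\zeta,E)-\log[z,a]_\zeta$ extend subharmonically, resp.\ harmonically, so $\psi=(u-\log[z,a]_\zeta)-(G(\cdot,\zeta,E)-\log[z,a]_\zeta)$ extends to a subharmonic function on all of $V$. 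On $\partial V\subset E$ we have $\limsup u\le 0$ while $G(\cdot,\zeta,E)$ is continuous there (the finitely many boundary points are type II/III, hence outside the capacity-zero exceptional set $e$) with $G(\cdot,\zeta,E)\to 0$; so $\limsup_{V\ni z\to y}\psi(z)\le 0$ for each $y\in\partial V$. By the maximum principle (Proposition~\ref{prop: maxprinciple}) applied on the domain $V$, $\psi\le 0$ on $V$, i.e.\ $u\le G(\cdot,\zeta,E)$ on $V$. On $E=\sP^1\setminus V$ we already have $u\le 0\le G(\cdot,\zeta,E)$. Taking the supremum over $u$ gives $Q_E\le G(\cdot,\zeta,E)$ on $\sP^1\setminus\{\zeta\}$.

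Combining the two inequalities yields $Q_{\sP^1\setminus V}=G(\cdot,\sP^1\setminus V,\zeta)$ on $\sP^1\setminus\{\zeta\}$. The final clause is then immediate: by Proposition~\ref{prop: Green}(iv) the Green function is continuous off the capacity-zero set $e\subset\partial D_\zeta$, but here $\partial D_\zeta=\partial V$ consists of finitely many type II/III points, so $e=\emptyset$ (a nonempty subset of $\sH^1$ has positive capacity, while $e$ has capacity zero), and thus $G$, hence $Q_{\sP^1\setminus V}$, is continuous on all of $\sP^1\setminus\{\zeta\}$; strong harmonicity on $V\setminus\{\zeta\}=D_\zeta\setminus\{\zeta\}$ is Proposition~\ref{prop: Green}(v).

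The main obstacle I expect is the bookkeeping near $\zeta$ in the maximum-principle step: one must be careful that $V\setminus\{\zeta\}$ is not a domain a priori relative to the way subharmonicity is phrased, so the clean move is to subtract $\log[z,a]_\zeta$ from both $u$ and $G$ to obtain a genuinely subharmonic function $\psi$ on the \emph{full} simple domain $V$ (including $\zeta$), to which Proposition~\ref{prop: maxprinciple} applies directly; verifying that this subtraction indeed produces something subharmonic across $\zeta$ (using the defining property of $\mathcal{L}(\zeta)$ for $u$ and Proposition~\ref{prop: Green}(v) for $G$) is the one genuinely non-formal point. A secondary subtlety is justifying $e=\emptyset$, which hinges on the Remark that nonempty subsets of $\sH^1$ have positive capacity, together with $\partial V\subset\sH^1$.
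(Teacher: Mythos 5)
Your proposal is correct and follows essentially the same route as the paper: you show $G(\cdot,\zeta,\sP^1\setminus V)$ is an admissible competitor (using that the exceptional set $e\subset\partial V$ must be empty since $\partial V$ consists of type II/III points of positive capacity), and you obtain the reverse inequality from the maximum principle on $V$ after subtracting $\log[\cdot,a]_\zeta$ with $a\notin V$ to extend across $\zeta$. The only minor difference is that you apply the maximum principle to each competitor $u-G$ individually and then take the supremum, whereas the paper applies it once to $Q^*_{\sP^1\setminus V}-G$, invoking Proposition \ref{prop: uniffamilies} for the subharmonic extension of $Q^*_{\sP^1\setminus V}-\log[\cdot,a]_\zeta$; both versions are sound.
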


\begin{proof}  
For $z \in  \sP^1 \setminus V$, $Q_{\sP^1 \setminus V}(z) \leq 0 = G(z,\sP^1 \setminus V, \zeta)$. Fix an $a \not \in \overline{V}$. By Proposition \ref{prop: uniffamilies}, the function  $Q_{\sP^1 \setminus V}^* - \log [\cdot,a]_\zeta$ has subharmonic extension to $V$. Note that $Q_{\sP^1 \setminus V}=Q_{\sP^1 \setminus V}^*$ on $\partial V$, since $\partial V \subset  \sH^1$. Then   $Q_{\sP^1 \setminus V}^*(z) - \log [z,a]_\zeta - (G(z, \sP^1 \setminus V, \zeta)- \log [z,a]_\zeta) \leq 0$ on $\partial V$. Using the maximum principle we get that $Q_{\sP^1 \setminus V}(\cdot) \leq  G(\cdot, \sP^1 \setminus V, \zeta)$  in $V \setminus \{\zeta\}$. In the other direction, $G(\cdot, \sP^1 \setminus V, \zeta) \in \mathcal{L}(\sP^1 \setminus V, \zeta)$ and $G(\cdot, \sP^1 \setminus V, \zeta)|_{\sP^1 \setminus V}=0$. Hence in $\sP^1\setminus\{\zeta\}$ we have  $Q_{\sP^1 \setminus V}(\cdot) = G(\cdot, \sP^1 \setminus V, \zeta)$. 
\end{proof}

Now we are ready to prove the general case.

\begin{theorem} \label{theorem: equality} For an arbitrary  compact set $E \subset \sP^1\setminus \{\zeta\}$ of positive capacity, $G(\cdot,\zeta,E)= Q_E^*(\cdot)$ in $\sP^1\setminus \{\zeta\}$.
\end{theorem}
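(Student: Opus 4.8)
The plan is to prove the two inequalities $Q_E^* \leq G(\cdot,\zeta,E)$ and $Q_E^* \geq G(\cdot,\zeta,E)$ separately, using the already-established special cases (Propositions \ref{prop: pseudoball} and \ref{prop: simpledom}) together with the monotonicity properties of both sides. For the inequality $Q_E^* \leq G(\cdot,\zeta,E)$, I would first observe that $G(\cdot,\zeta,E) \in \mathcal{L}(\zeta)$ by Proposition \ref{prop: Green}(v), and that $G(\cdot,\zeta,E) \geq 0$ on all of $\sP^1$ by Proposition \ref{prop: Green}(ii), but it need not be $\leq 0$ on $E$ (it is $0$ on $E$ only off a set of capacity zero). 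So instead of using $G$ directly as a competitor, I would take an arbitrary $u \in \mathcal{L}$ with $u|_E \leq 0$ and show $u \leq G(\cdot,\zeta,E)$ off a capacity-zero set, then conclude via upper semicontinuous regularization. Concretely: on $D_\zeta$ (the component of $\sP^1\setminus E$ containing $\zeta$), the function $u - G(\cdot,\zeta,E)$ is subharmonic in $D_\zeta \setminus \{\zeta\}$ (the $\log[z,a]_\zeta$ singularities at $\zeta$ cancel), and on $\partial D_\zeta \subset \partial E$ one has $\limsup u \leq 0 \leq G$ except on a capacity-zero subset of the boundary; a maximum-principle argument that ignores a polar boundary set — available via Theorem \ref{prop: Cartanprinciple} or a Riesz-decomposition argument as in Proposition \ref{prop: simpledom} — then gives $u \leq G$ on $D_\zeta$. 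Off $D_\zeta$, $G \geq 0 \geq u$ directly (up to capacity zero, using $u|_E \leq 0$ and the maximum principle on the other components of $\sP^1\setminus E$). Taking the supremum over $u$ and then the u.s.c. regularization, and using that $G$ is u.s.c. (indeed continuous off a polar set, Proposition \ref{prop: Green}(iv)), yields $Q_E^* \leq G(\cdot,\zeta,E)$.

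For the reverse inequality $Q_E^* \geq G(\cdot,\zeta,E)$, the idea is to approximate $E$ from outside by complements of simple domains. Using Proposition \ref{prop: exhaustion}, exhaust $D_\zeta$ by strict simple domains $V_1 \subset V_2 \subset \cdots$ with $\overline{V_n} \subset V_{n+1}$ and $\bigcup_n V_n = D_\zeta$; set $E_n := \sP^1 \setminus V_n$, so that $E_n \supset E_{n+1} \supset \cdots$ and, since $\bigcap_n V_n = \overline{D_\zeta}$ fails in general, I would instead arrange the $V_n$ so that $\bigcap_n E_n = \sP^1 \setminus D_\zeta$. By Proposition \ref{prop: simpledom}, $Q_{E_n} = G(\cdot,\zeta,E_n)$ for each $n$. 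By Proposition \ref{prop: nested}, $Q_{\sP^1\setminus D_\zeta} = \lim_n Q_{E_n} = \lim_n G(\cdot,\zeta,E_n)$, and since $E_n \downarrow \sP^1\setminus D_\zeta$ the Green functions increase to $G(\cdot,\zeta,\sP^1\setminus D_\zeta)$ (by monotonicity, Proposition \ref{prop: Green}(vii), plus a convergence argument using Harnack, Propositions \ref{prop: Harnackineq}/\ref{prop: harmconv}, on $D_\zeta\setminus\{\zeta\}$, the limit being harmonic and matching boundary behavior). Finally $G(\cdot,\zeta,\sP^1\setminus D_\zeta) = G(\cdot,\zeta,E)$ by Proposition \ref{prop: Green}(vi). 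This gives $Q_{\sP^1\setminus D_\zeta} = G(\cdot,\zeta,E)$; since $E \subset \sP^1 \setminus D_\zeta$ we get $Q_E \geq Q_{\sP^1\setminus D_\zeta} = G(\cdot,\zeta,E)$, hence $Q_E^* \geq G(\cdot,\zeta,E)$.

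Combining the two inequalities gives $Q_E^* = G(\cdot,\zeta,E)$ on $\sP^1 \setminus \{\zeta\}$, as claimed.

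The main obstacle I anticipate is the first inequality, specifically handling the boundary behavior of a competitor $u \in \mathcal{L}$ on $\partial D_\zeta$: since $G$ vanishes on $E$ only up to a set $e$ of capacity zero, one cannot apply the ordinary maximum principle (Proposition \ref{prop: maxprinciple}) to $u - G$ on $D_\zeta$ naively — the boundary inequality $\limsup(u-G) \leq 0$ only holds off $e$. The clean way around this is to run the Riesz-decomposition argument as in the proof of Proposition \ref{prop: simpledom} on an exhausting simple domain $V$ with $\zeta \in V$, $\overline{V} \subset D_\zeta\setminus\{a\}$: write $u$ and $G$ via their potentials plus harmonic parts on $V$, compare harmonic majorants on $\partial V \subset \sH^1$ (where everything is genuinely continuous and the regularization issue disappears), and let $V \uparrow D_\zeta$. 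A secondary subtlety is justifying the monotone convergence $G(\cdot,\zeta,E_n) \to G(\cdot,\zeta,E)$ in the second part; this is where Proposition \ref{prop: nested} does most of the work, reducing it to the already-proved simple-domain case, but one still needs to identify the limit with $G(\cdot,\zeta,\sP^1\setminus D_\zeta)$, which I would do by checking it is harmonic on $D_\zeta\setminus\{\zeta\}$ with the right singularity at $\zeta$ and the right (polar-exceptional) boundary values, invoking uniqueness of the Green function.
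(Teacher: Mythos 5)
Your first inequality ($Q_E^*\leq G$) is fine, and in fact easier than you anticipate: since $G(\cdot,\zeta,E)\geq 0$ \emph{everywhere} (Proposition \ref{prop: Green}(ii)) and any competitor $u\in\mathcal{L}$ with $u|_E\leq 0$ is upper semicontinuous, one has $\limsup_{z\to x}\bigl(u(z)-G(z)\bigr)\leq 0$ at \emph{every} $x\in\partial D_\zeta\subset E$, with no capacity-zero exceptional set; the ordinary maximum principle (Proposition \ref{prop: maxprinciple}) applied to the subharmonic extension of $u-G$ across $\zeta$ then gives $u\leq G$ on $D_\zeta$, and $u\leq 0\leq G$ elsewhere. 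Your proposed detour through a polar-exceptional maximum principle or a Riesz decomposition on $\partial V\subset D_\zeta$ (where you have no a priori control of $u$) is unnecessary and, as sketched, does not quite make sense; the paper instead gets this direction from the exhaustion $F_n=\sP^1\setminus V_n$, Propositions \ref{prop: simpledom}, \ref{prop: nested} and the monotonicity \ref{prop: Green}(vii), giving $Q_E=\lim_n G(\cdot,\zeta,F_n)\leq G$.

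The genuine gap is in your second inequality. You claim that $G(\cdot,\zeta,E_n)$ increases to $G(\cdot,\zeta,\sP^1\setminus D_\zeta)$ and hence $Q_{\sP^1\setminus D_\zeta}=G(\cdot,\zeta,E)$ pointwise. This is false whenever the exceptional set $e\subset\partial D_\zeta$ of Proposition \ref{prop: Green}(iii) is nonempty, which is exactly the case that forces the theorem to be about $Q_E^*$ rather than $Q_E$: for $z\in e$ one has $G(z,\zeta,E_n)=0$ for all $n$ (because $e\subset\sP^1\setminus V_n$ and the complement of a simple domain has no capacity-zero boundary exceptional set), while $G(z,\zeta,E)>0$; likewise every competitor is $\leq 0$ at $z\in e\subset\sP^1\setminus D_\zeta$, so $Q_{\sP^1\setminus D_\zeta}(z)\leq 0<G(z,\zeta,E)$. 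So your limit identity can hold at best quasi-everywhere, and even to get that you invoke ``uniqueness of the Green function'' or a maximum principle tolerating a capacity-zero boundary set — neither of which is among the results quoted in the paper (Theorem \ref{prop: Cartanprinciple} is not such a boundary principle, and using it would also import the second-countability hypothesis the paper deliberately confines to Section \ref{sec: Cartan}). Moreover, even granted quasi-everywhere convergence, you never address the points of $e$ themselves, where $G\leq Q_E^*$ still has to be proved. The paper closes exactly this gap differently: $G$ \emph{is} a competitor for $Q_{E\setminus e}$, so $G\leq Q_{E\setminus e}$; then, taking $v\in\mathcal{L}$ with $v=-\infty$ on $e$ (Corollary \ref{coro: polarsets}) and passing from $u+\varepsilon v$ to $u$, one gets $Q_{E\setminus e}\leq Q_E$ off a capacity-zero set $K\subset\bP^1$; finally, at $z\in K$ one uses $G(z,\zeta,E)=\limsup_{\sH^1\ni y\to z}G(y,\zeta,E)$ together with $G=Q_E$ on $\sH^1$ (capacity-zero sets contain no points of $\sH^1$) to conclude $G(z,\zeta,E)\leq Q_E^*(z)$. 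Without these ingredients (or equivalents), your argument for $G\leq Q_E^*$ does not go through.
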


\begin{proof}  Note first that $Q_{\sP^1\setminus D_\zeta} = Q_E$, where $D_\zeta$ is the connected component of $\sP^1\setminus E$ containing $\zeta$. Indeed, let $U$ be a connected component of $\sP^1\setminus E$ not containing $\zeta$ and let $u \in \mathcal{L}(E,\zeta)$. By the maximum principle, $u \leq 0$ on $U$. Hence $u \leq 0$ on $\sP^1\setminus D_\zeta$, which shows that $Q_{\sP^1\setminus D_\zeta} \geq Q_E$ (the other inequality is obvious). Since $G(\cdot,\zeta,E)=G(\cdot,\zeta,\sP^1\setminus D_\zeta)$, it is thus enough to establish the equality in the theorem for sets $E=\sP^1\setminus D$ of positive capacity, where $D \subsetneq \sP^1$ is a domain containing $\zeta$.\\

For such a domain $D$  consider an exhaustion  by simple domains $V_1 \subset V_2 \subset ...$  such that $\overline{V_n} \subset V_{n+1}$ and let $F_n := \sP^1\setminus V_n$. Then, by Proposition \ref{prop: simpledom}, $G_n:= G(\cdot,\zeta,F_n)= Q_{F_n}$. By Proposition  \ref{prop: nested}, $Q_E= \lim_{n \to \infty}G_n \leq G(\cdot,\zeta, E)$.   \\

It remains to prove  that $G(\cdot,\zeta,E) \leq Q_E^*$ in $\sP^1\setminus \{\zeta\}$. Let $e \subset \partial D$ be the (possibly empty) set of capacity zero such that $G(z,\zeta,E) >0$ for all $z \in e$. Then $G(\cdot,\zeta,E) \leq Q_{E\setminus e}$. By Corollary \ref{coro: polarsets}, there exists a function $v \in \mathcal{L}$ such that $e \subset K:= \{z \in \sP^1\setminus \{\zeta\}: v(z)=-\infty\}$. Without loss of generality we can assume that $v \leq 0$ on $E$. Let $u \in \mathcal{L}$ be such that $u \leq 0$ on $E$ and let $\varepsilon >0$ be arbitrary. Then $u+\varepsilon v \leq Q_{E\setminus  e}$.   Letting $\varepsilon \to 0$ we get that $Q_E \leq Q_{E\setminus  e}$ (and hence  $Q_E = Q_{E\setminus  e}$) outside the set $K$, which has zero capacity. Hence also $G(\cdot,\zeta,E) \leq Q_E \leq Q_E ^*$ outside $K$. Let now $z \in K$. By Remark 7.38 in \cite{BR10}, we have 
 \begin{multline*}
 G(z,\zeta,E)=\limsup_{\sH^1 \ni y \to z}G(y,\zeta,E) \\
 =\limsup_{\sH^1 \ni y \to z}Q_E(y) \leq \limsup_{x \to z} Q_E(x)=Q_E^*(z),
 \end{multline*}
 which completes the proof.
 \end{proof} 

\begin{coro}  $Q_E=R_E :=\sup\{u \in \mathcal{L}', \ u|_E \leq 0\})$, where  
 \begin{multline*} \mathcal{L}' :=\bigl\{u: u\text{ is subharmonic in }\sP^1\setminus\{\zeta\},  \\ \forall a \ne \zeta \   u -\log [z,a]_\zeta \text{ extends to a function harmonic in } \sP^1\setminus\{a\} \bigr\}. 
\end{multline*}
\end{coro}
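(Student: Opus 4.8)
The plan is to show the two extremal functions coincide by a double inequality, both directions of which are near-tautological once the main theorem is in hand. First I would observe that $\mathcal{L}' \subset \mathcal{L}$: if $u - \log[z,a]_\zeta$ extends harmonically across $a$, then in particular it extends subharmonically, so every competitor for $R_E$ is a competitor for $Q_E$, and therefore $R_E \le Q_E$ pointwise. This is the trivial direction and needs no further argument.

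For the reverse inequality $Q_E \le R_E$, the idea is that $Q_E^*$ is itself (essentially) a member of $\mathcal{L}'$, so it is an admissible competitor for $R_E$. Concretely, by Theorem \ref{theorem: equality} we have $Q_E^* = G(\cdot,\zeta,E)$, and by Proposition \ref{prop: Green}(v) the Green function satisfies precisely the defining property of $\mathcal{L}'$: it is subharmonic on $\sP^1 \setminus \{\zeta\}$, and for every $a \ne \zeta$ the function $G(z,\zeta,E) - \log[z,a]_\zeta$ extends to a function harmonic in a neighborhood of $\zeta$ — and away from $\zeta$ it is harmonic on $D_\zeta \setminus \{\zeta\}$ and identically... here one must be slightly careful: $G$ is not harmonic on all of $\sP^1 \setminus \{a\}$, only subharmonic, since on $\sP^1 \setminus D_\zeta$ it is zero but $-\log[z,a]_\zeta$ is genuinely subharmonic there. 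So the cleaner route is: first replace $E$ by $\sP^1 \setminus D$ with $D \supsetneq \{\zeta\}$ a domain (as in the proof of Theorem \ref{theorem: equality}, $Q_E = Q_{\sP^1 \setminus D_\zeta}$ and likewise $R_E = R_{\sP^1 \setminus D_\zeta}$ by the same maximum-principle argument), and then exhaust $D$ by simple domains $V_n$ with $\zeta \in V_n$. By Proposition \ref{prop: simpledom}, $Q_{\sP^1 \setminus V_n} = G(\cdot, \sP^1\setminus V_n, \zeta)$ is \emph{strongly harmonic} on $V_n \setminus \{\zeta\}$ and identically $0$ on $\sP^1 \setminus V_n$; one checks directly that this function lies in $\mathcal{L}'$ (for $a \in \sP^1 \setminus \overline{V_n}$ it is harmonic off $a$; for $a$ near $\zeta$ inside $V_n$ the local behavior is governed by Proposition \ref{prop: Green}(v); for $a \in \sP^1 \setminus V_n$ the function $0 - \log[z,a]_\zeta$ must extend harmonically off $a$, which it does since $-\log[z,a]_\zeta$ has Laplacian $\delta_\zeta - \delta_a$ and is harmonic away from both — but this fails unless we subtract the right thing).

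Given the subtlety just flagged, I expect the \textbf{main obstacle} to be pinning down exactly which functions lie in $\mathcal{L}'$ and verifying that the approximants $G_n = Q_{\sP^1 \setminus V_n}$ genuinely qualify — the condition in $\mathcal{L}'$ is quite rigid because "harmonic in $\sP^1 \setminus \{a\}$" forces a single Dirac mass $-\delta_\zeta$ for $\Delta$ on all of $\sP^1 \setminus \{a\}$, whereas $G_n$ has $\Delta G_n$ supported on $\partial V_n$. The resolution is that $\mathcal{L}'$ as written is perhaps best understood up to the relation $Q = Q^*$ on $\sH^1$, or else the corollary should be read with $G_n$ replaced by genuinely admissible functions from its own class (e.g. $\log[z,a]_\zeta - \log r$ type functions, which \emph{are} in $\mathcal{L}'$ since $\log[z,a]_\zeta$ is harmonic off $\zeta$ and $a$). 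Assuming the latter reading, the endgame is: $R_E \ge \sup_n G_n = \lim_n G_n = Q_E$ by Proposition \ref{prop: nested} and Theorem \ref{theorem: equality}, combined with $R_E \le Q_E$ from $\mathcal{L}' \subset \mathcal{L}$, giving $R_E = Q_E$ as claimed. The regularized version $R_E^* = Q_E^* = G(\cdot,\zeta,E)$ then follows immediately by taking upper semicontinuous regularizations.
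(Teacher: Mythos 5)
Your overall route is the paper's: $\mathcal{L}'\subset\mathcal{L}$ gives $R_E\le Q_E$, and the reverse inequality comes from the exhaustion in the proof of Theorem \ref{theorem: equality}, since $Q_E=\sup_n G_n$ with $G_n=Q_{\sP^1\setminus V_n}=G(\cdot,\zeta,\sP^1\setminus V_n)$ and $G_n\le 0$ on $E\subset\sP^1\setminus V_n$; the paper's proof is exactly ``$Q_E=\sup_n G_n\le R_E$, the other inequality is obvious.'' You correctly sense the one delicate point, namely whether the $G_n$ are admissible for $R_E$, but your resolution of it does not work. Read literally, the condition ``$u-\log[z,a]_\zeta$ extends to a function harmonic in $\sP^1\setminus\{a\}$ for \emph{every} $a\ne\zeta$'' forces $\Delta u=\delta_\zeta$ off every point $a$, and this excludes not only $G_n$ (whose Laplacian charges $\partial V_n$) but also your proposed substitutes $\log[z,a]_\zeta-\log r$: testing such a function against a different center $a'\ne a$, the difference has Laplacian $\delta_{a'}-\delta_a$ and is not harmonic at $a$, so it fails the same literal test. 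Moreover, even if those ball-type functions were admissible, their upper envelope over balls $B(a,r)_\zeta\supset E$ is in general strictly smaller than $Q_E$ (it only sees the smallest ball containing $E$), so they cannot replace the $G_n$ in proving $Q_E\le R_E$. Your ``endgame'' then quietly reverts to $R_E\ge\sup_n G_n$, i.e., to precisely the admissibility statement you had just put in doubt, so as written the argument is circular at its key step.

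The way to close this (and evidently the intended reading, given the paper's comparison of $\mathcal{L}'$ with the classical class $\mathcal{L}^+_{\mathbb{C}}$) is to interpret the defining condition of $\mathcal{L}'$ as: for every $a\ne\zeta$, $u-\log[z,a]_\zeta$ extends to a function harmonic \emph{in a neighborhood of} $\zeta$. With this reading $\mathcal{L}'\subset\mathcal{L}$ still holds (harmonic extensions near $\zeta$ are in particular subharmonic, and subharmonicity is local), and each $G_n$ does lie in $\mathcal{L}'$: by Proposition \ref{prop: simpledom} and Proposition \ref{prop: Green}(v), $G_n-\log[z,a]_\zeta$ is harmonic near $\zeta$, since $\Delta G_n$ is carried by the finitely many type II points of $\partial V_n$, away from $\zeta$; also $G_n\equiv 0$ on $\sP^1\setminus V_n\supset E$. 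Then $Q_E=\lim_n G_n=\sup_n G_n\le R_E\le Q_E$ by Proposition \ref{prop: nested} and monotonicity, which is the paper's argument. Your worry about the printed definition is legitimate (taken verbatim it essentially empties $\mathcal{L}'$, since no function in $BDV(\sP^1)$ can have Laplacian $\delta_\zeta$ off every point), but the proposal needed to identify and commit to the weaker, workable reading rather than substitute functions that neither satisfy the literal condition nor majorize $Q_E$.
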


\begin{proof} With the notation as in the proof of Theorem \ref{theorem: equality}, \[
Q_E =\sup_n G_n \leq R_E.\] The other inequality is obvious.
\end{proof} 

\subsection{Comparison with the classical case} \label{sec: classical}

Fix a $\zeta \in  \sP^1$ and let $u \in \mathcal{L}=\mathcal{L}(\zeta)$. For an arbitrary  $a \ne \zeta$, rewrite 
\[
u(z) -\log [z,a]_\zeta =u(z)-\log\biggl ( \frac{[z,a]_g}{[z,\zeta]_g [a,\zeta]_g}\biggr )
\]

As $z \to \zeta$, we see (by continuity of the Hsia kernel $[\cdot,a]_g$ near $\zeta \neq a$) that $\limsup_{z \to \zeta} u(z) -\log [z,a]_\zeta= \limsup_{z \to \zeta} u(z) + \log[z,\zeta]_g$. The limit superior exists, since $u(z) -\log [z,a]_\zeta$ extends to  a function which is  subharmonic, in particular upper semicontinuous, in a  neighborhood of $\zeta$. \\

Recall that  the spherical kernel $[x,y]_g$ is an extension of the chordal metric from  $\mathbb{P}^1(K) \times \mathbb{P}^1(K)$ to  $\sP^1 \times \sP^1$. Consider now the field $\mathbb{C}$ of complex numbers with the standard (archimedean) absolute value and the point $\zeta =\infty \in  \mathbb{P}^1(\mathbb{C})$. The chordal distance $[z,\infty]$ equals $\frac{1}{\sqrt{1+|z|^2}}$ for $z \in \mathbb{C}$. The class of all functions $u$ subharmonic in $\mathbb{C}$ and such that $u(z) \leq \frac{1}{2}\log (1+|z|^2) + C_u$ with a constant $C_u$ dependent only on $u$ is the Lelong class $\mathcal{L}_\mathbb{C}$ on $\mathbb{C}$ (an analogous class of plurisubharmonic functions can be defined on $\mathbb{C}^n$ for $n >1$). Here we use the natural logarithm. Each difference $u(z)- \frac{1}{2}\log (1+|z|^2)$ extends to an $\omega$-subharmonic function $v$ on $\mathbb{P}^1(\mathbb{C})$ (where $\omega$ is the Fubini-Study form) by taking $v(\infty)=\limsup_{z \to \infty} u(z)- \frac{1}{2}\log (1+|z|^2)$, and the extensions yield a natural 1-to-1 correspondence between the Lelong class and the class of $\omega$-subharmonic functions. This  viewpoint on Lelong classes was introduced in \cite{GZ05} and it helped launch systematic (and successful) study of pluripotential theory on compact (complex) K\"ahler manifolds.  Our class $\mathcal{L}$ is a non-archimedean analog of the Lelong class, and similarly the class $\mathcal{L}'$ is an analog of the class $\mathcal{L}_\mathbb{C}^+=\{u \in \mathcal{L}_\mathbb{C}: u(z)- \frac{1}{2}\log (1+|z|^2)= \mathcal{O}(1) \mbox{ as } z \to \infty\}$. \\

Some extremal functions associated with  compact subsets of $\mathbb{C}$ are well known because of their usefulness in approximation theory. In \cite{Leja34}, F. Leja introduced the following extremal function:\\

Let $E \subset \mathbb{C}$ be a compact set. For an array $a^{(n)}=(a_0^{(n)},...,a_n^{(n)}), \ n=2,3,...$ of points in $\partial E$ (with all $a_i^{(n)}, i=0,...,n$ pairwise distinct) define the quantities  
\[ M(a^{(n)}):=\prod_{0\leq j <k \leq n} \left|a_j^{(n)} -a_k^{(n)}\right|. 
\]
Consider an array of Fekete extremal points in $\partial E$, that is, an array $b^{(n)}=(b_0^{(n)},...,b_n^{(n)}), \ n=1,2,...$  such that
 $M(b^{(n)})=\max_{a^{(n)}}M(a^{(n)})$ for every $n \geq 2$. In each row $b^{(n)}$ order the points so that $|\Delta_0(b_0,...,b_n)| \leq |\Delta_j(b_0,...,b_n)|, \ j =1,2,...n$, where \[
 \Delta_j(b_0,...,b_n)=(b_j-b_0)...(b_j-b_{j-1})(b_j-b_{j+1})...(b_j-b_n), \ j=0,1,...n.\]
  Let $L_n$ be the Lagrange interpolating polynomial $L_n(z)=\frac{(z-b_1)...(z-b_n)}{(b_0-b_1)...(b_0-b_n)}$. The function 
  \[
 L(z):=\lim_{n \to \infty} \frac{1}{n}\log |L_n(z)|
 \] 
 is well defined for all $z \in \mathbb{C}$. 

Under the assumption that $E$ is a union of (non-degenerate) continua, Leja also proved that $L$ equals the Green function $G(\cdot,\infty,E)$ for $E$ with pole at infinity. In \cite{Gorski48}, J. G\'orski (a student of Leja) proved the equality  $L(\cdot)=G(\cdot,\infty,E)$ for an arbitrary compact $E$ with positive capacity. In \cite{Siciak62}, J. Siciak (another student of Leja) defined an analog of the Leja extremal function for compact subsets of $\mathbb{C}^N$. Another extremal function for a compact subset of $\mathbb{C}^N$, $N \geq 1$, was defined in \cite{Za77} as 
\[
V_E(z):=\sup\{ u \in \mathcal{L}, u|_E \leq 0 \},
\]
where (for $N >2$) $\mathcal{L}$ is the class of all plurisubharmonic functions $u$ with logarithmic growth, $u(z) \leq \frac{1}{2}\log (1+\|z\|^2) + C_u$ (the Lelong class). Proofs that $V_E=\log L(\cdot,E)$ (different ones) were given in \cite{Za77}  for $E$ with $V_E$ continuous, and in \cite{Siciak81} (Theorem 4.12)   for an arbitrary non-polar compact $E$ (see also \cite{Klimek91}, Theorem 5.1.7 and, in an even more general setting of algebraic varieties embedded in $\mathbb{C}^N$, \cite{Ze91}, Th\'eor\`eme 5.1). It was observed without proof in both \cite{Za77} (beginning of Section 4) and \cite{Siciak81} that $V_E(\cdot)=G(\cdot,\infty,E)$
when $n=1$.  \\

Note that using our method of  proof of Theorem \ref{theorem: equality}
 one can prove directly that $V_E^*(\cdot)=G(\cdot,\infty,E)$ in $\mathbb{C}$, without relying on the separate equalities of each of these functions with the function $\log L(\cdot,E)$. As before, we consider $G(\cdot,\infty,E)$ as the Robin constant of the set $E$ minus the equlibrium potential. We can also assume without loss of generality that $\mathbb{C} \setminus E$ is connected.  Recall  that in proving the equality $G_E=Q_E^*$ in the non-archimedean case we took advantage of the possibility of exhausting  a domain with a sequence of subdomains whose boundaries do not contain sets of zero capacity (simple domains). There are no such subdomains in $\mathbb{C}$  with the standard absolute value, where each point has zero logarithmic capacity. Hence different auxiliary results must be used. First, for a general unbounded domain $D \subset \mathbb{C}$ we can take (as in \cite{Gorski48}) an exhaustion of $D$ by domains  $D_n, \ n \geq 1$, such that, for every $n$, $\overline{D_n} \subset D_{n+1}$, $G(\cdot,\infty,\mathbb{C}\setminus D_n)|_{\mathbb{C}\setminus D_n}=0$ and $D_n \to D$ in the sense of Carath\'eodory convergence of domains. Then $G(\cdot,\infty,\mathbb{C}\setminus D)=\lim_{n\to \infty}G(\cdot,\infty,\mathbb{C}\setminus D_n)=\lim_{n \to \infty} V_{\mathbb{C}\setminus D_n}$. Second, we need a way to compare $V_{E\cup F}^*$  with $V_E^*$ for every $E$ bounded and $F$ of zero capacity. In fact, the equality $V_{E\cup F}^*=V_E^*$ holds in $\mathbb{C}^N, \ N \geq 1$: see Proposition 3.11 in \cite{Siciak81} or Corollary 5.2.5 in \cite{Klimek91}. With these tools in place, the arguments of Theorem \ref{theorem: equality} go through in the archimedean case.\\

Finally, let us note that other characterizations of the Green function as the extremal function relative to a class of  functions with certain growth are also available in the classical theory. For example, see \cite{Doob01}, formula 1.XIII (18.1) in the logarithmic potential case; see further  \cite{Doob01}, Theorem 1.VII.2 in the Newtonian potential case in $\mathbb{R}^n$, $n >2$.

\def\cprime{$'$}

\end{document}